\title{Differential Borel equivariant cohomology via connections}
\author{Corbett Redden}
\address{Department of Mathematics, LIU Post, Long Island University, 720 Northern Blvd, Brookville, NY 11548, USA}
\email{corbett.redden@liu.edu}
\numberwithin{equation}{section}
\theoremstyle{plain}
\newtheorem{thm}[equation]{Theorem}
\newtheorem{prop}[equation]{Proposition}
\newtheorem{lemma}[equation]{Lemma}
\theoremstyle{definition}
\newtheorem{defn}[equation]{Definition}
\newtheorem{example}[equation]{Example}
\newtheorem{rem}[equation]{Remark}
\DeclareFontFamily{OT1}{pzc}{}
\DeclareFontShape{OT1}{pzc}{m}{it}%
              {<-> s * [1.15] pzcmi7t}{}
\DeclareMathAlphabet{\mathpzc}{OT1}{pzc}{m}{it}
\newcommand{\Z}{\mathbb{Z}}
\newcommand{\R}{\mathbb{R}}
\newcommand{\fg}{\mathfrak{g}}
\newcommand{\iso}{\cong}
\renewcommand{\=}{:=}
\newcommand{\pt}{\text{pt}}
\newcommand{\into}{\hookrightarrow}
\newcommand{\cK}{\mathcal{K}}
\newcommand{\wt}{\widetilde}
\newcommand{\fk}{\mathfrak{k}}
\newcommand{\Id}{\operatorname{Id}}
\newcommand{\Map}{\operatorname{Map}}
\newcommand{\cC}{\mathcal{C}}
\newcommand{\Ker}{\operatorname{Ker}}
\newcommand{\cU}{\mathcal{U}}
\newcommand{\cH}{\mathcal{H}}
\newcommand{\cA}{\mathcal{A}}
\newcommand{\cB}{\mathcal{B}}
\newcommand{\Aut}{\operatorname{Aut}}
\newcommand{\End}{\operatorname{End}}
\renewcommand{\o}{\circ}
\newcommand{\Bn}{\mathpzc{B_{_\nabla}\!}}
\newcommand{\En}{\mathpzc{E_{_\nabla}\!}}
\newcommand{\B}{\mathpzc{B}}
\newcommand{\E}{\mathpzc{E}}
\newcommand\CG[1][G]{\check{C}_{#1}}
\newcommand\ZG[1][G]{\check{Z}_{#1}}
\newcommand\HG[1][G]{\check{H}_{#1}}
\newcommand\bH{\mathbb{H}}
\newcommand\x{\check{x}}
\renewcommand\L{\Lambda}
\newcommand\bas{\text{bas}}
\newcommand\basic{\operatorname{basic}}
\newcommand\hor{\text{hor}}
\newcommand\w{\operatorname{w}}
\newcommand\cc{I}
\newcommand\curv{R}
\newcommand\dR{\operatorname{dR}}
\renewcommand{\flat}{\operatorname{flat}}
\newcommand\D{\check}
\newcommand\Ad{\operatorname{Ad}}
\newcommand\dG{d_{G}}
\renewcommand\O{\Omega}
\newcommand\W{{\operatorname{W}}}
\newcommand\Obj{\operatorname{Ob}}
\newcommand\Der{\operatorname{Der}}
\newcommand\cF{\mathpzc{F}}
\newcommand\GMan{\text{-}\Man}
\newcommand\K{\operatorname{K}}
\newcommand\cM{\mathpzc{M}}
\newcommand*{\rchi}{\raisebox{0.35ex}{\( \chi \)}}
\newcommand\DGA{\operatorname{DGA}}
\newcommand\TV{T^V\hspace{-1mm}}
\renewcommand\TH{T^H\hspace{-1mm}}
\newcommand\cl{{\operatorname{cl}}}
\newcommand\CS{\operatorname{CS}}
\newcommand\Sing{\operatorname{Sing}_*\!}
\renewcommand\S{\mathcal{S}}
\DeclareMathOperator*{\hocolim}{hocolim}
\newcommand\Man{\cat[Man]}
\newcommand\Top{\cat[Top]}
\newcommand\Set{\cat[Set]}
\newcommand\sSet{\cat[sSet]}
\newcommand\sPre{\cat[sPre]}
\newcommand\Gpd{{\cat[Gpd]}}
\newcommand\Gpdinfty{\infty\cat[Gpd]}
\newcommand\ho{\mathrm{ho}} 
\newcommand\DK{\Gamma} 
\newcommand\Stack{\cat[Stack]}
\newcommand\Shv{\cat[Shv]}  
\newcommand\PShv{\cat[PShv]}  
\newcommand\Bun[1][\T]{\cat[Bun]_{#1}}
\newcommand\Bunc[1][\T]{\cat[Bun]_{#1,\nabla}}
\newcommand\GBun[1][\T]{\text{-}\cat[Bun]_{#1}}
\newcommand\GBunc[1][\T]{\text{-}\cat[Bun]_{#1,\nabla}}
\newcommand\N{\operatorname{N}} 
\newcommand\h{\cat[h]} 
\newcommand\KZ[1][n]{\check{\mathcal{K}}(\Z,{#1})}
\renewcommand\K{\check{\mathcal{K}}}
\newcommand\op{\mathrm{op}}
\newcommand\Ch{\cat[Ch]}
\newcommand\Fun{\operatorname{Fun}}
\newcommand\Ab{\cat[Ab]}
\renewcommand\1{\mathbbm{1}}
\newcommand*{\triple}[2][.1ex]{%
  \mathrel{\vcenter{\offinterlineskip%
  \hbox{$#2$}\vskip#1\hbox{$#2$}\vskip#1\hbox{$#2$}}}}
\newcommand*{\tripleleftarrow}{\triple{\leftarrow}}
\renewenvironment{itemize}{
  \begin{list}{$\bullet$}{
    \setlength{\leftmargin}{1.25em}
  }
}{
  \end{list}
}
\begin{document}


\begin{abstract}For a compact Lie group acting on a smooth manifold, we define the differential cohomology of a certain quotient stack involving principal bundles with connection.  This produces differential equivariant cohomology groups that map to the Cartan--Weil equivariant forms and to Borel's equivariant integral cohomology.  We show the Chern--Weil homomorphism for equivariant vector bundles with connection naturally factors through differential equivariant cohomology.
\end{abstract}

\maketitle
\thispagestyle{empty} 



\section{Introduction}

Differential cohomology, also known as Deligne cohomology \cite{Bry93} or Cheeger--Simons differential characters \cite{CS85}, is a contravariant functor $\HG[]^*$ from the category of manifolds to graded abelian groups.  It sits in the character diagram
\[ \xymatrix@R=3mm@C=6mm{ 0 \ar[dr]&&&&0 \\
&H^{n - 1}(M;\R/\Z) \ar[dr] \ar^{-B}[rr]&& H^{n}(M;\Z) \ar[ur]  \ar[dr]\\
H^{n -1}(M;\R) \ar[ur] \ar[dr]&& \HG[]^{n}(M) \ar[ur] \ar[dr] && H^{n}(M;\R) \\
& {\displaystyle \frac{\Omega^{n-1}(M)}{\Omega^{n-1}(M)_\Z}} \ar[ur] \ar_{d}[rr]&& \Omega^{n}(M)_\Z \ar[ur] \ar[dr]\\
0 \ar[ur]&&&&0
} \]
where the two diagonals are short exact sequences, $B$ is the Bockstein homomorphism, and the subscript $\Z$ denotes closed forms with $\Z$-periods.  Because it captures both the torsion of integral cohomology and the local structure of differential forms, differential cohomology has proven to be useful in a number of contexts.  In particular, the Chern--Weil homomorphism factors through $\HG[]^*$, making it a natural home for secondary invariants of principal bundles with connection.  Differential cohomology has now been generalized in several directions, leading to the notion of a ``differential extension'' of a generalized cohomology theory.

The purpose of this paper is to construct a differential extension of Borel's equivariant integral cohomology.  This was also recently done by K\"ubel \cite{1510.06392v1}, and our functors fit into the same short exact sequences.   Assume that $G$ is a compact Lie group acting smoothly on a manifold $M$.   The equivariant cohomology of $M$, with coefficients in an abelian group $A$, is the cohomology of the homotopy quotient $H_G^*(M;A) := H^*(EG\times_G M;A)$.  The complex of equivariant forms $(\O_G(M), d_G)$ is given by the Weil model $(S\fg^* \otimes \L \fg^* \otimes \O(M))^G_\hor$, whose cohomology is naturally isomorphic to $H^*_G(M;\R)$.  One can equivalently use the Cartan model $\left(S\fg^*\otimes \O(M) \right)^G$ in place of the Weil model, as the two complexes are isomorphic (not merely quasi-isomorphic).  We summarize the paper's main results in two theorems below.   Theorem \ref{thm:Properties} contains the important user-friendly properties of $\HG^*$.  It is simply Propositions \ref{prop:ses}, \ref{prop:GroupChange}, and \ref{prop:Reduction} combined into a single statement.  In fact, we show in Proposition \ref{prop:Uniqueness} that the functor $\HG^*$ is characterized uniquely by properties \ref{thm1diag} and \ref{thm1conn}.  Theorem \ref{thm:equivCCS} states that the equivariant Chern--Weil homomorphism, with equivariant forms defined by the equivariant curvature $\Omega_G$ \cite{MR705039}, factors through $\HG^*$.  For convenience, we restate it here as Theorem \ref{thm:ChernWeil} in the specific case of equivariant complex vector bundles.

 \begin{thm}\label{thm:Properties}For any compact Lie group $G$, there exists a contravariant functor $\HG^*(-)$, from the category of smooth $G$-manifolds to the category of graded abelian groups, satisfying the following.
 \begin{enumerate}[leftmargin=*, label=(\Alph*)]
 \item\label{thm1diag} There exist natural homomorphisms forming the following character diagram.
\[ \xymatrix@R=3mm@C=6mm{ 0 \ar[dr]&&&&0 \\
&H_G^{n - 1}(M;\R/\Z) \ar[dr] \ar^{-B}[rr]&& H_G^{n}(M;\Z) \ar[ur]  \ar[dr]\\
H_G^{n -1}(M;\R) \ar[ur] \ar[dr]&& \HG^{n}(M) \ar_{\cc}[ur] \ar^{\curv}[dr] \ar@{}_>>>>>{\dR}[rr]&& H_G^{n}(M;\R) \\
& {\displaystyle \frac{\Omega^{n-1}_G(M)}{\Omega^{n-1}_G(M)_\Z}} \ar[ur] \ar_{d_G}[rr]&& \Omega^{n}_G(M)_\Z \ar[ur] \ar[dr]\\
0 \ar[ur]&&&&0
} \]
 \item\label{thm1group} For $G_1 \xrightarrow{\phi} G_2$ a Lie group homomorphism, a smooth map $M \xrightarrow{F} N$ that is $\phi$-equivariant induces homomorphisms 
 \[ \HG[G_2]^*(N) \xrightarrow{F^*_\phi}  \HG[G_1]^*(M)\] compatible with the character diagrams.
 \item\label{thm1conn} A principal $G$-bundle with connection $(P,\Theta) \to M$ induces homomorphisms 
 \[ \HG^*(P) \xrightarrow{\Theta^*} \HG[]^*(M)\] 
 compatible with the character diagrams.
 \end{enumerate}
 \end{thm}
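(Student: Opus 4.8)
The plan is to construct the functor $\HG^*$ as the differential cohomology of a smooth quotient stack that remembers connection data, and then to read off the three listed properties from the structure of that stack together with the sheaf-level character diagram of ordinary differential cohomology. Concretely, I would work in the $\infty$-category of stacks (sheaves of spaces) on the site $\Man$, where ordinary differential cohomology is corepresented by an object $\K(\Z,n)$ sitting in its own character diagram at the level of sheaves. To a $G$-manifold $M$ I would associate the differential quotient stack whose $X$-points are triples $(P\to X, \Theta, f)$ consisting of a principal $G$-bundle with connection and a $G$-equivariant map $f\colon P\to M$; writing this as $[M/G]_\nabla$, I would \emph{define}
\[ \HG^n(M) \= \big[\, [M/G]_\nabla,\ \K(\Z,n)\,\big]. \]
The presence of the connection $\Theta$ is exactly what makes the de Rham corner of the diagram compute the Weil model $(S\fg^*\otimes\L\fg^*\otimes\O(M))^G_\hor$ rather than merely invariant forms, so it is essential for recovering $\O_G(M)$.

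Next I would establish property \ref{thm1diag}. Applying the sheaf-level character diagram of $\K(\Z,n)$ to $[M/G]_\nabla$ and computing by descent along the simplicial manifold presenting the stack produces four corners mapping into $\HG^n(M)$; the task is to identify these with $H_G^*(M;\Z)$, $H_G^*(M;\R)$, and the equivariant forms $\O_G(M)$. The integral and real corners agree with Borel cohomology because forgetting connections and bundles is a weak equivalence of underlying homotopy types between $[M/G]_\nabla$ and the homotopy quotient $EG\times_G M$. The form corner agrees with the Weil model because the connection data on the presenting groupoid cuts out precisely the horizontal basic subcomplex. Exactness of the two diagonals then follows from exactness for $\K(\Z,n)$ together with these identifications, and I would isolate this as the main short-exact-sequence statement.

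Properties \ref{thm1group} and \ref{thm1conn} I would deduce from naturality of the stack construction. A Lie group homomorphism $\phi\colon G_1\to G_2$ and a $\phi$-equivariant map $F\colon M\to N$ induce a morphism $[M/G_1]_\nabla \to [N/G_2]_\nabla$ by sending a $G_1$-bundle with connection and equivariant map to its associated $G_2$-bundle with induced connection, and $F^*_\phi$ is precomposition with this morphism, automatically compatible with the diagrams by naturality of the character diagram of $\K(\Z,n)$. For property \ref{thm1conn}, a connection $\Theta$ on a principal $G$-bundle $P\to M$ determines a map $\sigma_\Theta\colon M\to [P/G]_\nabla$ refining the equivalence $M\simeq [P/G]$, and $\Theta^*$ is precomposition with $\sigma_\Theta$; compatibility is again naturality.

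The step I expect to be the main obstacle is the form-corner identification inside property \ref{thm1diag}: proving that the differential cohomology of the connection-decorated quotient stack has curvatures landing exactly in the Weil-model equivariant forms $\O_G(M)$, with the correct image of closed equivariant forms with integral periods. This requires controlling the descent spectral sequence for the simplicial manifold presenting $[M/G]_\nabla$ and matching its $E_\infty$-page with the Cartan--Weil complex, which is where the genuine geometric input of the connection enters and where the bookkeeping is heaviest.
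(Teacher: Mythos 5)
Your proposal is essentially the route the paper takes: the same differential quotient stack $\En G\times_G M$ (your $[M/G]_\nabla$), the same definition $\HG^n(M)=\ho\Shv(\N(\En G\times_G M),\K(\Z,n))$, the associated-bundle morphism of stacks for \ref{thm1group}, and the pullback map $M\to\En G\times_G P$ for \ref{thm1conn}, with compatibility in both cases coming for free from naturality. The one organizational difference is in how \ref{thm1diag} is obtained. The paper first builds a Hopkins--Singer-style cochain model, the homotopy pullback of $C^*(EG\times_G M;\Z)\to C^*(EG\times_G M;\R)\leftarrow \O_G^{*\geq n}(M)$, from which the character diagram and the two exact diagonals are immediate by the standard Hopkins--Singer argument; the only input is that the Weil homomorphism $\O_G^*(M)\to C^*(EG\times_G M;\R)$ is an injective quasi-isomorphism. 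It then proves separately that this agrees with the stack definition. You instead want to read the diagram off the sheaf-level hexagon for $\K(\Z,n)$ directly; that works, but it still reduces to exactly the two identifications the paper isolates, namely that the underlying homotopy type of the stack is $EG\times_G M$ (proved by homotopification of the simplicial presentation $(\O^1\otimes\fg\times M)\sslash G$, using that the sheaf of connections is a $C^\infty$-module and hence contractible) and that $\O^n(\En G\times_G M)\iso\O^n_G(M)$, which is the Freed--Hopkins theorem. On that last point you have slightly mislocated the difficulty: since $\O^n$ is a set-valued (0-truncated) sheaf, the form corner is a direct computation of stack morphisms into $\O^n_{\cl}$, not a descent spectral sequence; the simplicial/descent bookkeeping is what identifies the \emph{cohomology} corners with Borel cohomology. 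The cochain-model detour buys you the exact sequences essentially for free from existing literature, at the cost of having to prove the comparison theorem; your direct route avoids the comparison but must re-derive exactness from the defining fiber sequence of $\K(\Z,n)$, which you should spell out.
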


 \begin{thm}\label{thm:ChernWeil}Let $(V,\nabla) \to M$ be a $G$-equivariant Hermitian vector bundle with $G$-invariant connection.  There exist natural classes $\check{c}_k(\Theta_G) \in \HG^{2k}(M)$ satisfying
 \begin{enumerate}
 \item $\cc\left( \check{c}_k(\Theta_G) \right) = c_k \left( EG \times_G V \to EG \times_G M \right) \in H^{2k}_G(M;\Z)$,
 \item $\curv \left( \check{c}_k(\Theta_G) \right) = c_k\left( \Omega_G\right) \in \O^{2k}_G(M)$.
 \end{enumerate}
 \end{thm}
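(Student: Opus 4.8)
The plan is to construct a universal differential equivariant Chern class, transport it to $M$ via the equivariant frame bundle and its connection, and then deduce properties (1) and (2) from the naturality of $\cc$ and $\curv$ in the character diagram of Theorem~\ref{thm:Properties}\ref{thm1diag} (this being the specialization of the equivariant Chern--Weil factorization of Theorem~\ref{thm:equivCCS} to complex vector bundles). Averaging produces a $G$-invariant Hermitian metric, so the unitary frame bundle $\pi \colon P \to M$ of $(V,\nabla)$ is a $G$-equivariant principal $U(n)$-bundle; thus $G \times U(n)$ acts on $P$ with $U(n)$ acting freely, $M = P/U(n)$ as $G$-manifolds, and the $G$-invariant $\nabla$ corresponds to a $G$-invariant connection $1$-form $\Theta$ on $P$.

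First I would produce universal classes $\check{c}_k^{\mathrm{univ}} \in \HG[U(n)]^{2k}(\pt)$. For the pair $(\HG[U(n)], \pt)$ the integral and form corners of the character diagram are $H^*_{U(n)}(\pt;\Z) = H^*(BU(n);\Z)$ and the invariant polynomials $\O^*_{U(n)}(\pt) = (S\mathfrak{u}(n)^*)^{U(n)}$. The universal Chern class $c_k \in H^{2k}(BU(n);\Z)$ and the $k$-th Chern--Weil polynomial have the same image in $H^{2k}(BU(n);\R)$ by the classical Chern--Weil theorem, so the exact sequences of the diagram yield a differential class with these two prescribed images; since $H^{2k-1}(BU(n);\R/\Z) = 0$, this lift $\check{c}_k^{\mathrm{univ}}$ is unique, and satisfies $\cc(\check{c}_k^{\mathrm{univ}}) = c_k$, with $\curv(\check{c}_k^{\mathrm{univ}})$ the $k$-th Chern--Weil polynomial.

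Next I would transport this class to $M$. The constant map $P \to \pt$ is equivariant over the projection $\phi \colon G \times U(n) \to U(n)$, so Theorem~\ref{thm:Properties}\ref{thm1group} gives $F^*_\phi \colon \HG[U(n)]^{2k}(\pt) \to \HG[G\times U(n)]^{2k}(P)$. The $G$-invariant connection $\Theta$ should then furnish a connection-pullback $\Theta^* \colon \HG[G\times U(n)]^{2k}(P) \to \HG[G]^{2k}(M)$, namely the $G$-equivariant enhancement of Theorem~\ref{thm:Properties}\ref{thm1conn} attached to the $G$-equivariant principal $U(n)$-bundle with connection $(P,\Theta)$ (equivalently, Theorem~\ref{thm:Properties}\ref{thm1conn} applied inside the Borel construction $EG\times_G P \to EG\times_G M$). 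I then set
\[ \check{c}_k(\Theta_G) \= \Theta^* F^*_\phi\big(\check{c}_k^{\mathrm{univ}}\big) \in \HG[G]^{2k}(M), \]
with naturality under $\phi$-equivariant bundle maps inherited from the functoriality in Theorem~\ref{thm:Properties}\ref{thm1group}.

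Properties (1) and (2) are then read off from the commuting character diagrams. Naturality of $\cc$ identifies $\cc(\check{c}_k(\Theta_G))$ with the image of $c_k$ under $H^{2k}(BU(n);\Z) \to H^{2k}_{G\times U(n)}(P;\Z) \xrightarrow{\sim} H^{2k}_G(M;\Z)$, which classifies the principal $U(n)$-bundle $EG\times_G P \to EG\times_G M$ and hence equals $c_k(EG\times_G V \to EG\times_G M)$, giving (1). Naturality of $\curv$ identifies $\curv(\check{c}_k(\Theta_G))$ with the image of the universal Chern polynomial under the equivariant Weil homomorphism determined by $\Theta$, which is exactly $c_k(\Omega_G)$ for the equivariant curvature $\Omega_G$ of \cite{MR705039}, giving (2). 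I expect the main obstacle to be the construction of the equivariant connection-pullback $\Theta^*$ together with the verification that on curvatures it realizes the equivariant Chern--Weil homomorphism, since this requires matching the effect of $\Theta^*$ on the $G\times U(n)$ Weil-model forms of $P$ with the equivariant curvature on $M$; by contrast, the uniqueness of the universal class and the integral computation are essentially formal.
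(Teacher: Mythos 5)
Your proposal follows essentially the same route as the paper: the paper proves this as the $K=U(n)$ case of Theorem \ref{thm:equivCCS}, pulling back the unique differential refinement $\check{c}_k \in \HG[]^{2k}(\Bn U(n)) \iso H^{2k}(BU(n);\Z)$ along the composite $\En G \times_G M \xrightarrow{\Theta_G} \En(G\times U(n)) \times_{G\times U(n)} Q \to \Bn U(n)$, which is exactly your $\Theta^* \o F^*_\phi$ applied to $\check{c}_k^{\mathrm{univ}}$. The one step you defer --- constructing the equivariant connection-pullback and checking that on forms it sends $\omega \mapsto \omega(\Omega_G^{\wedge k})$ with $\Theta_G = \Theta - \iota_{\theta_\fg}\Theta$ --- is precisely what the paper supplies in Lemma \ref{lem:equivConn} and Proposition \ref{prop:Reduction}, and you have correctly identified it as the substantive content.
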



The paper is organized as follows.  Appendix \ref{sec:Background}, which appears at the end of the paper, contains a review of principal bundles with connection and the equivariant de Rham complex $\O_G(M)$.  Readers unfamiliar with the Weil model may find it helpful to begin here.  While this appendix is expository and should probably be replaced by a citation to \cite{GuilleminSternberg99}, we use it to introduce notation and to emphasize the relationship between $\W(\fg)$ and differential forms on principal bundles with connection.

In Section \ref{sec:HS} we define $\HG^*(M)$ using the Hopkins--Singer \cite{HS05} cochain model for $\HG[]^*(EG\times_G M)$, but with $\O(EG\times_G M)$ replaced by $\O_G(M)$.  The short exact sequences and character diagram follow immediately.  The advantage of this construction is that one need not leave the world of cochain complexes and topological spaces.  When giving constructions such as the Chern--Weil refinement, however, one is forced to play the tedious game of choosing classifying maps and checking everything is well-defined.

To emphasize the geometric nature of our constructions, we begin working with the differential Borel quotient stack $\En G \times_G M$ in Section \ref{sec:QuotientStack}.  This is defined as a contravariant functor that associates to any test manifold $X$ the groupoid of principal $G$-bundles with connection $(P,\Theta)\to X$, together with an equivariant map $P\to M$.   In \cite{MR3049871}, Freed--Hopkins show that a natural map
\[ \O^n_G(M) \xrightarrow{\iso} \O^n(\En G \times_G M)\] 
is an isomorphism, and we proceed to give the second definition
\[ \HG^n(M) \= \HG[]^n(\En G \times_G M).\]  
The details of this definition involve simplicial sheaves and are contained in Section \ref{Section:DefnProof}.  The virtue is that maps between various stacks induce homomorphisms in differential cohomology, as we demonstrate in Section \ref{sec:Examples}.

For example, the associated bundle construction defines a natural map
\[ \En G_1 \times_{G_1} M \xrightarrow{F_\phi} \En G_2 \times_{G_2} N,\]
which induces the homomorphism \ref{thm1group} from Theorem \ref{thm:Properties}.  Similarly, if $(P,\Theta) \to M$ is a principal $G$-bundle with connection, the pullback gives a natural map 
\[ M \xrightarrow{\Theta} \En G \times_G P,\]
 and this induces the homomorphism in \ref{thm1conn}.  For $(Q,\Theta) \to M$ a $G$-equivariant principal $K$-bundle with $G$-invariant connection, the refined Chern--Weil homomorphism is induced by a natural map 
 \[ \En G \times_G M \longrightarrow \Bn K,\]
 as described in Section \ref{sec:EquivCW}.  Given a $G$-bundle with connection mapping to $M$, we pull back $(Q,\Theta)$ to form a $G\times K$-bundle with connection, and then we quotient by $G$.  The equivariant extensions of the Chern forms appear when forming the connection on the pullback of $(Q,\Theta)$.  We conclude Section \ref{sec:EquivCW} with a discussion of $\HG^n(M)$ as differential characters on $\En G \times_G M$, and we show $\HG^2(M)$ is naturally equivalent to isomorphism classes of $G$-equivariant principal $\R/\Z$-bundles on $M$ with invariant connection.
%

A  definition of differential equivariant cohomology was previously given by Gomi \cite{MR2147734}, but it did not fully incorporate $\O_G(M)$, leading to groups that are not isomorphic to the ones in this paper.  See \cite{1510.06392v1} for a more detailed discussion.  In fact, many of our results can also be found in K\"ubel's paper, including the short exact sequences and the refined equivariant Chern--Weil homomorphism.  However, the constructions and methods are different.  The two definitions in this paper involve the topological space $EG\times_G M$ and the stack $\En G \times_G M$, and most homomorphisms are induced by bundle constructions.  On the other hand, K\"ubel uses the simplicial manifold $\{ G^\bullet \times M\}$ as the model for the homotopy quotient.  We believe that having multiple viewpoints will prove to be useful, just as it has for ordinary differential cohomology. 

\section{Cochain model}\label{sec:HS}
Our first definition of $\HG^*(-)$ is given by using the Hopkins--Singer construction for $\HG[]^*(EG\times_G M)$, with $\O(EG\times_G M)$ replaced by the Weil model of equivariant forms $\O_G(M)$.  The definition, short exact sequences, and ring structure all follow immediately from the same arguments given in Sections 2 and 3 of the Hopkins--Singer paper \cite{HS05}, so we will keep the proofs brief.  (In fact, \cite{HS05} also contains our construction for the case where $M=\pt$.)  Readers unfamiliar with the  Weil model of equivariant forms $\O_G(M)$ can find a full exposition in Appendix \ref{sec:Background}.  As noted in Section \ref{subsec:CartanMQ}, the Weil model may be replaced, via the Mathai--Quillen isomorphism, by the isomorphic Cartan model $\left( S\fg^* \otimes \O(M)\right)^G$  if desired.  


%

As discussed in Section \ref{subsec:deRhamModel}, let $(EG, \Theta_{EG})  \to BG$ be a universal principal $G$-bundle with connection, given as a direct limit of smooth finite-dimensional bundles.  If $M$ is a $G$-manifold, the Weil homomorphism induces an inclusion 
\[ \O^*_G(M) \xhookrightarrow{\w(\Theta_{EG})\otimes 1} \O^*(EG\times_G M) \into C^*(EG\times_G M;\R)\]
that is a quasi-isomorphism of cochain complexes.  

For the moment, we will use general coefficients.  Let $\L \subset V$ be a completely disconnected subgroup of a (possibly graded) vector space, and denote $\O^*(-;V) = \O^*(-)\otimes V$.  

\begin{defn}The differential equivariant cochain complex $(\CG(q)^*(M;\L),d)$ is a homotopy pullback in the diagram
\begin{equation}\label{eq:htpysquare} \vcenter{ \xymatrix{\CG(q)^*(M;\L) \ar[d]^{\curv} \ar[r]^-{\cc} &  C^*(EG\times_G M;\L) \ar@{_(->}[d] \\ \O_G^{*\geq q}(M;V) \ar@{^(->}[r] &C^*(EG \times_G M;V). }}\end{equation}
Explicitly, this is defined so that for $k \geq q$,
\begin{gather*}
\CG(q)^k(M;\L) \= C^k(EG \times_G M;\L) \times C^{k-1}(EG\times_G M;V) \times \O_G^k(M;V) \\
d(c,h,\omega) \= (\delta c, \, \omega - c - \delta h, \, \dG \omega),
\end{gather*}
and for $k < q$ we restrict to the subcomplex with $\omega=0$.  

For $\x = (c,h,\omega)$, we call $c$ the \textit{characteristic cocycle} and $\omega$ the \textit{curvature}, and we say $\x$ is \textit{flat} if the curvature is zero.  The degree $n$ differential equivariant cohomology is defined as the abelian group 
\[ \HG^n(M;\L) \=  H^n\left(\CG(n)^*(M;\L) \right) = \frac{\ZG^n(M;\L)}{d\CG^{n-1}(M;\L)_{\flat}} .\]
\end{defn}

%

As one would expect, the groups $\HG^*(M;\L)$ lie in short exact sequences that are completely analogous to those for ordinary differential cohomology.  We denote the image of $H^n_G(M;\L) \to H^n_G(M;V)$ by $H_G^n(M;V)_\L$, and we denote the subgroup of closed $n$-forms with $\L$-periods by $\O_G^n(M;V)_\L$.  This is defined so that under the de Rham isomorphism, 
\[ \frac{\O^n_G(M;V)_\L}{\dG \O^{n-1}_G(M;V)} \xrightarrow{\iso} H^n_G(M;V)_\L. \]

We will primarily be concerned with $\L=\Z$, $V=\R$, and we use the notations 
\[  \HG^*(M) = \HG^*(M;\Z), \quad  \HG^* = \HG^*(\pt;\Z).\]
In later sections, we will simply use $\Z \subset \R$ to simplify notation, but all results will immediately generalize to general coefficients $\L \subset V$.

\begin{prop}\label{prop:ses}The groups $\HG^*(M;\L)$ lie in the short exact sequences
\begin{align}
\label{ses1}\tag{SES 1}  0 \longrightarrow H^{n- 1}_G(M; V/\L) \longrightarrow &\HG^n(M;\L) \overset{\curv}\longrightarrow \O^n_G(M;V)_\L \longrightarrow 0, \\
\label{ses2}\tag{SES 2} 0 \longrightarrow \frac{\O^{n -1}_G(M;V)}{\O^{n -1}_G(M;V)_\L} \longrightarrow &\HG^n(M;\L) \overset{\cc}\longrightarrow H^n_G(M;\L) \longrightarrow 0, \\
\label{ses3}\tag{SES 3} 0 \longrightarrow \frac{H^{n - 1}_G(M;V)}{H^{n - 1}_G(M;V)_\L} \longrightarrow &\HG^n(M;\L) \longrightarrow A^n_G(M;V) \longrightarrow 0,
\end{align}
where $A^n_G(M)$ is the pullback (in sets) of the commutative square
\begin{equation}\label{eq:FundSquare} \vcenter{ \xymatrix{ \HG^n(M;\L) \ar@{->>}[r]^-{\cc} \ar@{->>}[d]^{\curv}& H^n_G(M;\L)  \ar@{->>}[d] \\
\O^n_G(M)_\L \ar@{->>}[r]^-{\dR} & H^n_G(M;V)_\L.
} } \end{equation}
These sequences fit into the character diagram
\[ \xymatrix@R=3mm@C=6mm{ 0 \ar[dr]&&&&0 \\
&H_G^{n - 1}(M;V/\L) \ar[dr] \ar^-{-B}[rr]&& H_G^{n}(M;\L) \ar[ur]  \ar[dr]\\
H_G^{n -1}(M;V) \ar[ur] \ar[dr]&& \HG^{n}(M;\L) \ar_-{\cc}[ur] \ar^-{\curv}[dr] && H_G^{n}(M;V) \\
& {\displaystyle \frac{\Omega^{n-1}_G(M;V)}{\Omega^{n-1}_G(M;V)_\L}} \ar[ur] \ar_-{d_G}[rr]&& \Omega^{n}_G(M;V)_\L \ar[ur] \ar[dr]\\
0 \ar[ur]&&&&0
} \]
where $B$ is the Bockstein homomorphism.
\begin{proof}The proof is straightforward and follows from the exact same arguments as those given in \cite{HS05}.  Alternatively, one can use the homological framework of Harvey--Lawson spark complexes, specifically Proposition 1.3 from \cite{HL06}.  The key point is that $\O_G^*(M) \to C^*(EG\times_G M;\R)$ is an inclusion, as noted in Lemma \ref{lem:WeilEGinj}, and it induces an isomorphism in cohomology.
\end{proof}\end{prop}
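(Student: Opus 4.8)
The plan is to work directly with the explicit cochain model of $\CG(q)^*(M;\L)$ and to extract the three sequences from cocycle-level manipulations, exactly along the lines of the ordinary case in \cite{HS05}. The single input that makes everything run is the statement recorded in Lemma \ref{lem:WeilEGinj}: the Weil homomorphism $\iota\colon \O^*_G(M;V) \into C^*(EG\times_G M;V)$ is an \emph{injective} chain map that is moreover a \emph{quasi-isomorphism}. Injectivity guarantees that the homotopy-pullback formula $(c,h,\omega)$ is unambiguous and that $d(c,h,\omega) = (\delta c,\, \omega - c - \delta h,\, \dG\omega)$ squares to zero on the nose; the quasi-isomorphism property is what produces surjectivity of the projections and lets one recognize the various kernels as honest (equivariant) de Rham or Borel cohomology groups. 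Before anything else I would record that $\curv[(c,h,\omega)] = \omega$ and $\cc[(c,h,\omega)] = [c]$ are well defined on $\HG^n(M;\L)$, that $\omega$ is automatically closed with $\L$-periods (the cocycle condition forces $\omega = c + \delta h$ with $c$ an $\L$-cocycle), and that both descend to the quotient by $d\CG^{n-1}(M;\L)_\flat$.

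For the first sequence \eqref{ses1}, surjectivity of $\curv$ onto $\O^n_G(M;V)_\L$ is immediate: a closed form with $\L$-periods has real class lying in the image of $H^n_G(M;\L)$, so I may choose an $\L$-cocycle $c$ with $\iota\omega - c = \delta h$, whence $(c,h,\omega)$ hits $\omega$. To identify $\Ker(\curv)$ I would normalize a class with $\omega = 0$ to the form $(c,h,0)$ with $c = -\delta h$; the reduction $\bar h \in C^{n-1}(EG\times_G M; V/\L)$ is then a cocycle, and $[\bar h]\mapsto [(-\delta h, h, 0)]$ gives the isomorphism $H^{n-1}_G(M;V/\L) \xrightarrow{\iso} \Ker(\curv)$, injectivity being a short cochain chase. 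The sequence \eqref{ses2} is dual in spirit: surjectivity of $\cc$ uses the quasi-isomorphism in the other direction, representing the image in $H^n_G(M;V)$ of an integral class $[c]$ by a closed \emph{equivariant} form $\omega$ and writing $\iota\omega - c = \delta h$; the kernel, after normalizing $c=0$, consists of classes $[(0,\iota\eta,\dG\eta)]$ and is identified with $\O^{n-1}_G(M;V)/\O^{n-1}_G(M;V)_\L$, where injectivity rests precisely on recognizing an $\iota\eta$ that becomes exact modulo $\L$-cocycles as a form with $\L$-periods.

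Finally I would treat \eqref{ses3} and the character diagram. One deduces \eqref{ses3} either by the same normalization (a class killed by both $\cc$ and $\curv$ reduces to $(0,h,0)$ with $h$ a $V$-cocycle, giving a class in $H^{n-1}_G(M;V)$ well defined modulo $\L$-periods) or formally from \eqref{ses1} and \eqref{ses2} together with the definition of $A^n_G(M;V)$ as the pullback of \eqref{eq:FundSquare}. Assembling the three sequences into the stated diagram is then bookkeeping; the only genuine check is that the top composite $H^{n-1}_G(M;V/\L)\to H^n_G(M;\L)$ is the negative Bockstein $-B$ of $0\to\L\to V\to V/\L\to 0$, which falls out of the relation $c = -\delta h$ above once compared with the definition of $B$. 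I expect the one nontrivial point throughout to be the honest use of Lemma \ref{lem:WeilEGinj}: every surjectivity and every kernel computation silently converts between equivariant forms and real Borel cochains, so the argument is only as strong as the assertion that $\iota$ is simultaneously injective and a quasi-isomorphism. For that reason I would, as an alternative, verify that the triple $\bigl(\O^*_G(M;V),\, C^*(EG\times_G M;\L),\, C^*(EG\times_G M;V)\bigr)$ with the inclusion maps is a Harvey--Lawson spark complex and simply invoke Proposition 1.3 of \cite{HL06}, which packages exactly these three sequences once the injective quasi-isomorphism is in hand.
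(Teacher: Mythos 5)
Your proposal is correct and follows exactly the route the paper takes: the paper's proof simply defers to the cocycle-level arguments of Hopkins--Singer (with the spark-complex formulation of Harvey--Lawson as an alternative), with the sole substantive input being that $\O_G^*(M;V)\to C^*(EG\times_G M;V)$ is an injective quasi-isomorphism (Lemma \ref{lem:WeilEGinj}). You have merely written out the details the paper leaves implicit, and your identification of that lemma as the one nontrivial point matches the paper's own emphasis.
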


\begin{prop} \label{prop:HS-Properties} \
\begin{enumerate}[leftmargin=*]
\item\label{item1:HS-Prop} The constructions $\CG^*(M;\L)$ and $\HG^*(M;\L)$ define contravariant functors from $G\GMan$ to cochain complexes and graded abelian groups, respectively.  
\item\label{item2:HS-Prop} An inclusion $\L_1 \into \L_2 \into V$ of totally disconnected subgroups of $V$ induces natural transformations
\[ \HG^*(-;\L_1) \longrightarrow \HG^*(-;\L_2). \]
\item\label{item3:HS-Prop} The cup product and wedge product induce algebra homomorphisms
\[ \HG^{k_1}(M;\L_1) \otimes \HG^{k_2}(M;\L_2) \longrightarrow \HG^{k_1+k_2}(M;\L_1\otimes \L_2),\]
where $\L_1 \otimes \L_2 \subset V_1 \otimes V_2$.
\item \label{item4:HS-Prop} The map $M \overset{\pi}\to \pt$ makes $\HG^*(M;\L)$ into a $\HG^*$-module.
\item\label{item5:HS-Prop} If $V$ is a ring with sub-ring $\L$, then $\HG^*(M;\L)$ is a graded commutative ring, and $\cc$ and $\curv$ are ring homomorphisms.
\end{enumerate}
\end{prop}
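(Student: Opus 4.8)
\emph{Proof strategy.} The plan is to observe that each group $\CG(q)^*(M;\L)$ is built, via the homotopy pullback \eqref{eq:htpysquare}, out of three ingredients: the singular cochain complexes $C^*(EG\times_G M;\L)$ and $C^*(EG\times_G M;V)$ with their cup products, the Weil complex $\O_G^*(M;V)$ with its wedge product, and the inclusion $\O_G^*(M;V)\into C^*(EG\times_G M;V)$. Each ingredient is contravariantly functorial in the $G$-manifold $M$ and multiplicative, and I claim that once this is recorded every assertion follows by the arguments of Sections 2 and 3 of \cite{HS05} applied verbatim, with $\O^*(EG\times_G M)$ replaced throughout by the sub-DGA $\O_G^*(M)$.

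For \ref{item1:HS-Prop} I would argue as follows. A $G$-equivariant smooth map $f\colon M\to N$ descends to a map $EG\times_G M\to EG\times_G N$ of Borel constructions, inducing pullbacks on singular cochains; the Weil homomorphism $\w(\Theta_{EG})\otimes 1$ is natural in the manifold variable (the universal connection $\Theta_{EG}$ is untouched by $f$), so $f^*\colon\O_G^*(N;V)\to\O_G^*(M;V)$ is compatible with the inclusion into singular cochains. Defining $f^*$ componentwise, one checks directly that it commutes with $d(c,h,\omega)=(\delta c,\,\omega-c-\delta h,\,\dG\omega)$, since $\delta$, $\dG$, and the inclusion all commute with pullback; functoriality under composition is then inherited from each factor. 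Property \ref{item2:HS-Prop} is equally formal: an inclusion $\L_1\into\L_2$ induces $C^*(-;\L_1)\to C^*(-;\L_2)$ compatibly with the maps into $C^*(-;V)$ and the identity on $\O_G^*(-;V)$, and the resulting componentwise map is a chain map because the image of the characteristic cocycle in $V$-coefficients is unchanged.

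The multiplicative statements \ref{item3:HS-Prop} and \ref{item5:HS-Prop} are where the real content lies. Here I would import the Hopkins--Singer product on $\CG(q)^*$, whose first and third components are $c_1\cup c_2$ and $\omega_1\wedge\omega_2$ and whose middle component is the prescribed correction term; graded commutativity and associativity on cohomology, and the fact that $\cc$ and $\curv$ are ring homomorphisms (being the projections to the first and third factors), then follow exactly as in \cite{HS05}. The only equivariant input required is that $\O_G^*(M;V)\into C^*(EG\times_G M;V)$ respects products up to coherent homotopy; this holds because the Weil homomorphism is a ring map (Chern--Weil is multiplicative) and because the de Rham-type inclusion of forms into singular cochains intertwines wedge and cup products up to the standard chain homotopy. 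Property \ref{item4:HS-Prop} is then purely formal: by \ref{item1:HS-Prop} the map $\pi\colon M\to\pt$ gives a pullback $\HG^*\to\HG^*(M)$, and the module action is the composite of $\pi^*\otimes\Id$ with the product of \ref{item3:HS-Prop}.

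I expect the main obstacle to be exactly this multiplicativity: the inclusion of forms into singular cochains is multiplicative only up to chain homotopy, so the product on $\CG(q)^*$ is a chain map, and associative and graded commutative, only after passing to cohomology. This is precisely the delicate point resolved in \cite{HS05}; since passing from $\O^*(EG\times_G M)$ to the sub-DGA $\O_G^*(M)$ preserves multiplicativity of both the Weil map and the inclusion into $C^*(EG\times_G M;\R)$ (using Lemma \ref{lem:WeilEGinj}), their homotopy-coherence argument transfers without modification.
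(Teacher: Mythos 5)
Your proposal is correct and follows essentially the same route as the paper: both reduce everything to the Hopkins--Singer arguments after observing that $\O_G^*(M)$ is a sub-DGA of $\O^*(EG\times_G M)$ whose inclusion into singular cochains is multiplicative up to a natural chain homotopy $B$, and both obtain the product via the usual corrected formula and the module structure via $\pi^*$ composed with the product. The only quibble is your parenthetical appeal to Lemma \ref{lem:WeilEGinj}, which concerns injectivity rather than multiplicativity; the relevant fact is that the basic subcomplex of a $G^\star$-algebra is a sub-DGA, which you in any case state.
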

\begin{proof}
The constructions $\CG^*(M)$ and $\HG^*(M)$ are clearly functorial with respect equivariant maps $f\colon M\to N$, as demonstrated in Appendix \ref{sec:Background}, thus proving \eqref{item1:HS-Prop}.

An inclusion $\L_1 \into \L_2$ induces natural cochain maps $C^*(-;\L_1) \into C^*(-;\L_2)$, and this gives $\CG^*(-;\L_1) \into \CG^*(-;\L_2)$, thus proving \eqref{item2:HS-Prop}.

For \eqref{item3:HS-Prop}, first note that the homomorphism $\O_G^*(M) \to \O^*(EG\times_G M)$ is a homomorphism of DGAs, as the basic subcomplex of a $G^\star$-algebra is a sub-DGA.  Hence, the product structure from \cite{HS05} immediately defines the product
\begin{align*}
\CG(q_1)^{k_1}(M;\L_1) \otimes \CG(q_2)^{k_2}(M;\L_2) &\overset{\cdot}\longrightarrow \CG(q_1+q_2)^{k_1 + k_2}(M;\L_1\otimes \L_2) \\
\HG^{k_1}(M;\L_1) \otimes \HG^{k_2}(M;\L_2) &\overset{\cdot}\longrightarrow \HG^{k_1 + k_2}(M;\L_1\otimes \L_2)
\end{align*} by 
\[ (c_1, h_1, \omega_1) \cdot (c_2, h_2, \omega_2) \= (c_1 \cup c_2, \> (-1)^{|c_1|} c_1 \cup h_2 + h_1 \cup \omega_2 + B(\omega_1, \omega_2), \> \omega_1\wedge \omega_2). \]
Here $B(\omega_1, \omega_2) \in C^{k_1 + k_2 - 1}(EG\times_G M;\R)$ is any natural cochain homotopy between $\wedge$ and $\cup$, and we denote the product in $\O_G(M)$ by $\wedge$ in a slight abuse of notation.

For \eqref{item4:HS-Prop}, the map $M \overset{\pi}\to \pt$ is $G$-equivariant, so it induces natural ring homomorphisms $\HG^*(\pt;\Z) \overset{\pi^*_G}\to \HG^*(M;\Z)$.  The composition 
\[ \HG^* \otimes \HG^*(M;\L) \overset{\pi^*_G \otimes 1}\longrightarrow \HG^*(M;\Z) \otimes \HG^*(M;\L) \overset{\cdot}\longrightarrow \HG^*(M;\L)\]
 makes $\HG^*(M;\L)$ into an $\HG^*$-module.

Item \eqref{item5:HS-Prop} follows similarly. If $\L$ is a ring, the product in $\HG^*$ can be composed with the product in $\L$, giving 
\[ \HG^*(M;\L) \otimes \HG^*(M;\L) \overset{\cdot}\longrightarrow \HG^*(M; \L\otimes \L) \longrightarrow \HG^*(M;\L).\]
The homomorphisms $\cc$ and $\curv$ preserve products at the cochain level, so the induced maps $\HG^*(M;\L) \overset{\cc}\to H^*_G(M;\L)$ and $\HG^*(M;\L) \overset{\curv}\to \O^*_G(M;V)_\L$ are also ring homomorphisms.
\end{proof}



Let us now use Propositions \ref{prop:ses} and \ref{prop:HS-Properties} to analyze $\HG^*(M)$ in a few simple examples.  

\begin{example}If $G=e$ is the trivial group, then $\O_G(M) \iso \O(M)$, and we can take $EG= \pt$.  This recovers the usual differential cohomology
\[ \HG[e]^*(M;\L) \iso \HG[]^*(M;\L).\]
\end{example}

\begin{example}The point $\pt$ is trivially a $G$-manifold, and the Borel construction is $EG \times_G \pt \iso BG$.  For $G$ compact we have $H^{2k}(BG;\R) \iso \O^{2k}_G(\pt) = S^k(\fg^*)^G$ and $H^{2k+1}(BG;\R) = \O^{2k+1}_G(\pt) = 0$.  The short exact sequences \eqref{ses2} and \eqref{ses1} give isomorphisms
\begin{equation}\label{eq:HGpt} \HG^{2k} \iso H^{2k}(BG;\Z), \quad \quad
\HG^{2k+1} \iso H^{2k}(BG;\R/\Z).\end{equation}
Furthermore, \eqref{ses2} becomes
\begin{align}\label{ses2pt}
0 \longrightarrow \frac{S^k(\fg^*)^G}{S^k(\fg^*)^G_\Z} \longrightarrow &\HG^{2k+1} \longrightarrow H^{2k+1}(BG;\Z) \longrightarrow 0,
\end{align}
where $\frac{S^k(\fg^*)^G}{S^k(\fg^*)^G_\Z}$ can also be written as $H^{2k}(BG;\R)\otimes \R/\Z$.
\end{example}

\begin{example}
If $G=\Gamma$ is a finite group, the Lie algebra is trivial.  Hence, the equivariant forms $\O_\Gamma(M)$ are simply the invariant forms  $\O(M)^\Gamma$.  When $M$ is a point, \eqref{ses2} implies  
\[\HG[\Gamma]^* \iso H^*(B\Gamma;\Z). \]
\end{example}

\begin{example}
For $G = S^1$, we know that $H^*(BS^1;\Z) \iso H^*(BS^1;\R)_{\Z} \iso \Z[t]$, where $|t|=2$.  The above identities \eqref{eq:HGpt} and \eqref{ses2pt} imply
\begin{align*} \HG[S^1]^{2k} &\iso H^{2k}(BS^1;\Z) \iso \Z t^k, \\
\HG[S^1]^{2k+1} &\iso H^{2k}(BS^1;\R)\otimes \R/\Z \iso (\R/\Z) t^k.
\end{align*}
We now show that as a graded commutative ring, this can be written
\[ \HG[S^1]^* \iso \Big( \Z \oplus (\R/\Z)\theta  \Big)[t], \]
where $|\theta|=1$ and $|t|=2$.  The relations $\theta t = t \theta$ and $\theta^2 = 0$ are implied by the adjectives graded commutative.

To verify this is a ring isomorphism, first note that the characteristic class map gives a ring isomorphism $\HG[S^1]^{2*} \iso \Z[t]$ in even degrees.  We can use the product's definition to determine what happens when multiplying by odd-degree elements.  Choose some cocycle $\alpha \in Z^2(BS^1;\Z)$ representing the generator of $H^2(BS^1;\Z)$.  An even element $n t^k \in \Z t^k \iso \HG[S^1]^{2k}$ can be represented by a cocycle of the form $(\alpha^k, b, t^k)$, and an odd element $[r] \theta t^k \in (\R/\Z) \theta t^k \iso \HG[S^1]^{2k+1}$ can be represented by $(0, r \alpha^k, 0)$.  The cochain-level product yields
\begin{align*}
 (n\alpha^{k_1}, \> b,\> n t^{k_1}) \cdot (0, \> r \alpha^{k_2},\> 0) &= (0, \> nr \alpha^{k_1 + k_2},\> 0), \\
 (0, \> r_1 \alpha^{k_1}, \>0) \cdot (0, \>r_2 \alpha^{k_2}, \>0) &= (0,\>0,\>0).
\end{align*}
Thus, the product of two odd elements is 0, and $(n t^{k_1}) \cdot ([r] \theta t^{k_2}) = [nr] \theta t^{k_1+k_2}$.
\end{example}

\begin{example}
Similarly, the odd cohomology groups of $BU(n)$ vanish, and
\[H^{*}(BU(n);\Z) \iso H^*(BU(n);\R)_\Z \iso \left(S^{*/2} \mathfrak{u} (n)^*\right)^{U(n)}_\Z \iso \Z[c_1, \ldots, c_n]. \]
The same argument given in the previous example shows that 
\begin{align*} \HG[U(n)]^{2k} &\iso H^{2k}(BU(n);\Z), \\
\HG[U(n)]^{2k+1} &\iso H^{2k}(BU(n);\R))\otimes \R/\Z,
\end{align*}
and as graded commutative rings with $|c_i| = 2i$ and $|\theta|=1$, 
\[ \HG[U(n)]^* \iso \Big(\Z\oplus (\R/\Z) \theta \Big) [c_1, \ldots, c_n].\]
\end{example}





\section{The differential quotient stack}\label{sec:QuotientStack}
From the description of the Weil algebra $\W(\fg)$ in Appendix \ref{sec:Background}, it is clear that equivariant forms $\O_G(M)$ most naturally arise when considering principal bundles {\it with connection}.  While the topological space $EG\to BG$ can be viewed as a universal bundle with connection, it does not naturally represent the {\it category} of principal bundles with connection on general manifolds.  For this reason, it will be more convenient to replace $EG\times_G M$ with the stack $\En G \times_G M$ and study its differential cohomology.  While we are not aware of any work specifically on $\HG[]^*(\En G \times_G M)$,  the general idea of defining differential cohomology via sheaves on manifolds has been widely used. This includes, but is not limited to, \cite{1208.3961, 1311.3188, MR3335251, MR3019405}.

\subsection{Background}\label{subsec:Sheaves}

First, remember that a \textit{groupoid} is a category in which every morphism is invertible.  Any \textit{set} $S$ can be viewed as a category, where the only morphisms are the identity morphisms.  We denote this fully faithful embedding of the category of sets into the category of groupoids by $\Set \subset \Gpd$.

A \textit{stack} $\cM$ (on the site of smooth manifolds) is a contravariant functor from the category of manifolds to the category of groupoids satisfying a descent condition; we denote the category by
\[ \Stack = \Shv_\Gpd \subset \Fun(\Man^\op, \Gpd).\]
This means that associated to every manifold $X$ is a groupoid $\cM(X)$, and a smooth map $X_1 \xrightarrow{f} X_2$ induces a functor 
\[ \cM(X_2) \xrightarrow{f^*} \cM(X_1).\]
There are associative natural transformations $g^* f^* \iso (g\o f)^*$ to deal with composition of functions.  To be a stack, as opposed to a prestack, $\cM$ must also satisfy a sheaf/descent condition that all of our examples will satisfy (see \cite{MR2206877} or Section \ref{Section:DefnProof} for more details).

\begin{example}
Any smooth manifold $N$ defines a stack $\underline{N}$ by associating to $X$ the set 
\[ \underline{N}(X) = C^\infty(X, N) \in \Set,\]
and to a smooth map $X_1 \xrightarrow{F} X_2$ the pullback $C^{\infty}(X_2, N) \xrightarrow{F^*} C^\infty (X_1, N)$.
\end{example}

\begin{example}Principal $G$-bundles with connection form a stack, which we denote $\Bn G$.  To any manifold $X$, let $\Bn G(X)$ be the groupoid whose objects are principal $G$-bundles $P\xrightarrow{\pi}X$ with connection $\Theta \in \O^1(P;\fg)$.  A \textit{morphism} $(P_1,\Theta_1) \overset{\varphi}\to (P_2,\Theta_2)$ is a bundle map preserving the connection; i.e. it is a $G$-equivariant map $\varphi$
\[ \xymatrix{P_1 \ar[rr]^{\varphi} \ar[dr]_{\pi_1}&& P_2 \ar[dl]^{\pi_2} \\ &X}\]
such that $\varphi^*(\Theta_2) = \Theta_1$.  Such a $\varphi$ must be a diffeomorphism, and hence all morphisms in $\Bn G(X)$ are isomorphisms.  Since bundles and connections pull back, a smooth map $f\colon X_1 \to X_2$ induces a functor $f^*\colon  \Bn G(X_2) \to \Bn G(X_1)$.  The groupoid of principal $G$-bundles on $X$, without connection, is defined analogously and denoted $\B G$.
\end{example}

The collection of \textit{morphisms between stacks} $\Shv_{\Gpd}(\cM_1, \cM_2)$ naturally forms a groupoid.  An object $\cM_1 \xrightarrow{\Psi} \cM_2$ is a collection of functors $\cM_1(X) \xrightarrow{\Psi(X)} \cM_2(X)$ for all $X$, together with natural transformations $\Psi(f)\colon  \Psi(X)\o f^* \to f^* \o \Psi(Y)$ for all smooth $f\colon  X\to Y$; morphisms between morphisms are given by natural transformations, which must be invertible since $\cM_2(-)$ is always a groupoid.


The Yoneda Lemma states that there is a canonical equivalence of categories
\[ \cM(X) \approx \Shv_{\Gpd}(\underline{X}, \cM),\]
and this defines a faithful embedding of the category of manifolds into the 2-category of stacks.  For this reason, we will usually not distinguish between a manifold $X$ and its associated stack $\underline{X}$.  We can also view an object in the category $\cM(X)$ as a map $X \to \cM$.  Thus, maps between stacks are  a generalization of smooth maps between manifolds; when $M$ and $X$ are both manifolds,
\[ \Shv_{\Gpd}(\underline{X}, \underline{M}) \iso \underline{M}(X) = C^\infty(X, M). \]

Via Yoneda, any bundle $(P,\Theta) \to X$ is naturally equivalent to a map 
\[ X \longrightarrow \Bn G, \]
and the groupoid of bundles with connection is naturally equivalent to the ``category of maps" $X \to \Bn G$.  This makes $\Bn G$ a more convenient classifying object for many purposes than the ordinary topological space $BG$.

\begin{example}Differential forms of degree $k$ define a stack $\O^k$, where 
\[  \O^k(X) \in \Set\]
is a set viewed as a groupoid with only identity morphisms.  A differential form $\omega \in \O^k(X)$ is equivalent to a morphism $X \xrightarrow{\omega} \O^k$.
\end{example}

%
%
%

\begin{example}\label{ExDefn:EnGM}If $M$ is a (left) $G$-manifold, the connection quotient stack $\En G \times_G M$ is defined as follows.  An object of $\left( \En G\times_G M \right)(X)$ is a principal $G$-bundle with connection $(P, \Theta) \to X$, together with a $G$-equivariant map $f\colon  P \to M$.  The map $f$ is equivalent to a section $F$ of the associated fiber bundle $P\times_G M$, with the equivalence given by $F(x) = [p, f(p)] \in P\times_G M$.  
\begin{align*}
X \to \En G \times_G M \quad \longleftrightarrow \quad  \vcenter{\xymatrix{ (P,\Theta) \ar[r]^{f} \ar[d] & M \\ X
}} \quad \longleftrightarrow \quad \vcenter{\xymatrix{ (P,\Theta) \times_G M \ar[d] \\ X \ar@/_1pc/[u]_{F} }}
\end{align*}
A morphism 
\[ \left(X \leftarrow (P_1, \Theta_1) \xrightarrow{f_1} M \right) \xrightarrow{\,\varphi\,} \left(X \leftarrow (P_2, \Theta_2) \xrightarrow{f_2} M \right) \] is a connection-preserving bundle isomorphism that covers the maps to $M$; i.e.
\[ \xymatrix{ &(P_1, \Theta_{1}) \ar[dd]_{\varphi}^{\iso} \ar[dl] \ar[dr]^{f_1}\\
X &&M\\
&(P_2, \Theta_{2}) \ar[ul] \ar[ur]_{f_2}
} \]
with $\varphi^*(\Theta_2) = \Theta_1$.

In the case where $M=\pt$, there is only one map $P \xrightarrow{f}\pt$, leading to the natural isomorphism $\En G \times_G \pt \iso \Bn G$.  We also let $\E G \times_G M$ denote the stack defined analogously via bundles without connection.
\end{example}

\begin{rem}Other possible notations for this connection quotient stack could include $M\!  \sslash^\nabla \! G$ and $(M_G )_\nabla$.  We choose to use $\En G\times_G M$ to emphasize the correspondence between maps of stacks and maps of topological spaces.  
\end{rem}

\subsection{Differential forms on the quotient stack}\label{subsec:FreedHopkins}
We want to study the differential cohomology of $\En G \times_G M$, but it is not immediately clear what that means.  As a first step, let's understand differential forms.  Since $\O^n(X) \iso \Shv_{\Gpd}(X, \O^n)$, it is natural to define 
\begin{equation}\label{defn:dRStack} \O^n(\En G \times_G M) \= \Shv_{\Gpd}(\En G \times_G M, \O^n).\end{equation}
An element of $\O^n(\En G \times_G M)$ is equivalent to functorially assigning, for all manifolds $X$, an element of $\O^n(X)$ to every object in $(\En G \times_G M)(X)$.  Since $\O^n(X)$ has only identity morphisms, isomorphic objects in $(\En G \times_G M)(X)$ must give equal elements in $\O^n(X)$.

Based on the  description of $\O_G(M)$ in Appendix \ref{sec:Background}, there is an obvious homomorphism
\begin{equation}\label{eq:EnGForms} \O_G^n(M) \overset{\iso}\longrightarrow \O^n( \En G \times_G M). \end{equation}
While it is not difficult to show this is injective, Freed--Hopkins prove it is in fact an  isomorphism of abelian groups \cite[Theorem 7.28]{MR3049871}.  Though $\O_G(M)$ is usually regarded as an algebraic replacement for $\O(EG\times_G M)$, it is the actual de Rham complex of the stack $\En G \times_G M$.  

We now explain the homomorphism \eqref{eq:EnGForms}.  To a map $X \to \En G \times_G M$, which is an object of $(\En G \times_G M)(X)$, the composition $\Theta^* \o f^*_G = \Theta^* \otimes f^*$ defines a homomorphism $\O^n_G(M) \to \O^n(X)$, as indicated in the following diagram.  
\begin{align*}
\vcenter{ \xymatrix{(P,\Theta) \ar[r]^f \ar[d] & M \\ X} }  \quad &\mapsto \quad \vcenter{\xymatrix{ \O^n_G(P) \ar[d]^{\Theta^*} & \O^n_G(M) \ar[l]_{f_G^*} \\ \O^n(P)_{\basic} \iso \O^n(X)}  }\end{align*}
More explicitly, for $\omega \otimes \eta \in \W^{2i,j}(\fg)$ and $\psi \in \O^k(M)$, then 
\[ \omega \otimes \eta \otimes \psi \mapsto \omega(\Omega^{\wedge i}) \wedge \eta( \Theta^{\wedge j}) \wedge f^*\psi \in \pi^* \O^{2i+j+k}(X).\]
This construction is invariant under morphisms in $(\En G \times_G M)(X)$.  If 
\[ \xymatrix{ &(P_1, \Theta_{P_1}) \ar[dd]_{\phi}^{\iso} \ar[dl]_{\pi_1} \ar[dr]^{f_1}\\
X &&M\\
&(P_2, \Theta_{P_2}) \ar[ul]^{\pi_2} \ar[ur]_{f_2}
} \]
with $\phi^*(\Theta_2) = \Theta_1$, it immediately follows that 
\[ \phi^*\big( \Theta_2^* \otimes f_2^* \big) = \phi^*\Theta_2^* \otimes (f_2 \o \phi)^* = \Theta_1^* \otimes f_1^*.\]
Since $\phi$ covers the identity on $X$, the two homomorphisms $\O^n_G(M) \to \O^n(X)$ are equal.

The de Rham differential induces a universal map of stacks $d\colon  \O^n \to \O^{n+1}$.  This gives a differential $\O^n(\En G \times_G M) \xrightarrow{d} \O^{n+1}(\En G \times_G M)$, and $(\O^*(\En G \times_G M), d)$ is naturally isomorphic to $(\O^*_G(M), d_G)$ as a cochain complex.

\begin{rem}It is important to note that the cochain complex $\O(\En G \times_G M)$ is given by 
\[ \cdots \xrightarrow{d} \Shv_\Gpd (\En G \times_G M, \O^{n}) \xrightarrow{d}  \Shv_\Gpd (\En G \times_G M, \O^{n+1}) \xrightarrow{d} \cdots.\]
Each degree is computed individually, with $\O^n$ being set valued, as opposed to some version of maps maps to the complex $\O^*$, where there are non-trivial morphisms.
\end{rem}

\begin{example}In the case where $M=\pt$, we have that $(S \fg^*)^G \iso \O(\Bn G)$, which is concentrated in even degrees.  A bundle with connection $(P,\Theta) \to X$ is naturally viewed as a map $X \to \Bn G$;  the induced map on differential forms is the usual map from Chern--Weil theory
\[ \xymatrix@R=0pt{ (S^n \fg^*)^G \ar[r]^{\iso} & \O^{2n}(\Bn G) \ar[r]^-{(P,\Theta)^*} &\O^{2n}(X) \\
\omega \ar@{|->}[rr]&& \omega(\Omega^{\wedge n}). } \]
\end{example}

\subsection{Differential cohomology via sheaves}\label{subsec:sPreDefn}
We now briefly outline the construction of $\HG[]^n(\En G \times_G M)$ and state the main properties.  Because the details are not important to understanding the constructions in Sections \ref{sec:Examples} and \ref{sec:EquivCW}, we postpone them until Section \ref{Section:DefnProof}.

Amongst its many constructions, ordinary cohomology can be represented by homotopy classes of maps to a  space.  For $A$ an abelian group, there exists an Eilenberg--MacLane space $K(A,n) \in \Top$ with 
\[ H^n(X;A) \iso [X, K(A,n)] = \ho \Top(X, K(A,n)).\]
While differential cohomology cannot be represented by a fixed topological space, it can be represented by a sheaf of ``spaces'' on the site of manifolds \cite{MR3019405,1208.3961, MR3335251, 1311.3188}.  We work with simplicial sets for convenience, as they form a natural home for groupoids, topological spaces, and chain complexes.  Letting $\Gpdinfty$ denote the $(\infty, 1)$-category of simplicial sets spanned by Kan complexes,  define $\K(\Z,n) \in \Shv_{\Gpdinfty}$ to be a sheaf fitting into a homotopy-commutative diagram of the form 
\begin{equation}\label{eq:SheafSquare} \vcenter{\xymatrix{ \K(\Z, n) \ar[d] \ar[r] & \cK(\Z,n) \ar[d] \\
\N(\O^n_{\cl})   \ar[r] & \cK(\R,n). }}\end{equation}
Here, $\cK(A,n)$ is an object in  $\Shv_{\Gpdinfty}$ representing ordinary cohomology.  

We can consider the sheaf of groupoids $\En G \times_G M$ as a sheaf of $\infty$-groupoids via the nerve construction $\N$.  The cohomology groups and differential cohomology groups are defined by considering maps between sheaves in the homotopy category:
\begin{align*} H^n(\En G \times_G M;A) &\= \ho \Shv_{\Gpdinfty}(\N(\En G \times_G M), \cK(A,n)),\\
\HG[]^n(\En G \times_G M) &\= \ho \Shv_{\Gpdinfty}(\N(\En G \times_G M), \K(\Z,n)) .\end{align*}

%
%
%


The following theorem is based on \eqref{eq:EnGForms} and the work of Bunke--Nikolaus--V\"olkl on homotopy-invariant sheaves \cite{1311.3188}.  The proof is given at the end of Section \ref{Section:DefnProof}.

\begin{thm}\label{thm:TwoDefnsAgree}
The two definitions of $\HG^*(M)$ are naturally equivalent,
\[ \HG[]^n(\En G \times_G M) \iso H^n\left( \CG^*(n)(M;\Z) \right).\]
\end{thm}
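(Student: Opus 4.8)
The plan is to establish the equivalence by comparing both sides to a common third object built from the Hopkins--Singer-style homotopy pullback at the level of sheaves. The key structural input is diagram \eqref{eq:SheafSquare} defining $\K(\Z,n)$ together with the isomorphism \eqref{eq:EnGForms} of Freed--Hopkins, which identifies $\O^*_G(M)$ with the genuine de Rham complex of the stack. The cochain complex $\CG(n)^*(M;\Z)$ was itself defined as a homotopy pullback \eqref{eq:htpysquare}, so morally both definitions are ``the same pullback'' computed in two different but equivalent models; the work is in making that precise.

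\textbf{Step 1.} First I would unwind the right-hand side. By the definition
\[ \HG[]^n(\En G \times_G M) \= \ho\Shv_{\Gpdinfty}\big(\N(\En G \times_G M),\, \K(\Z,n)\big),\]
together with the fact that $\Shv_{\Gpdinfty}(-,\K(\Z,n))$ sends homotopy colimits of test objects to homotopy limits, the defining square \eqref{eq:SheafSquare} for $\K(\Z,n)$ yields a homotopy pullback of mapping spaces. Its three outer terms are the mapping spaces into $\cK(\Z,n)$, $\cK(\R,n)$, and $\N(\O^n_{\cl})$, which compute $H^n_G(M;\Z)$, $H^n_G(M;\R)$, and something governed by $\O^*_G(M)$ respectively, using \eqref{eq:EnGForms} to identify forms on the stack with equivariant forms. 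I would phrase this using the long exact Mayer--Vietoris sequence of a homotopy pullback of spaces.

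\textbf{Step 2.} Next I would compute the homotopy groups of each of the three outer mapping spaces. This is where the appeal to Bunke--Nikolaus--V\"olkl \cite{1311.3188} enters: one needs that $\cK(A,n)$ represents ordinary cohomology of the stack, i.e.\ $\pi_{n-k}\,\Shv_{\Gpdinfty}(\N(\En G \times_G M),\cK(A,n)) \iso H^k_G(M;A)$, and that the sheaf $\N(\O^n_{\cl})$ (resp.\ the truncation producing the $\geq q$ condition) maps in via the de Rham realization to give exactly the closed equivariant forms and their periods. The identification $H^k(\N(\En G\times_G M);A) \iso H^k_G(M;A)$ for the topological pieces follows because $\En G\times_G M$ presents the homotopy quotient, and the de Rham piece follows from \eqref{eq:EnGForms}. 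With these inputs, the homotopy pullback assembles into precisely the short exact sequences \eqref{ses1}--\eqref{ses2} that also characterize $H^n(\CG^*(n)(M;\Z))$ by Proposition \ref{prop:ses}.

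\textbf{Step 3.} Finally I would produce an explicit comparison map realizing the abstract identification, rather than merely matching invariants. The cleanest route is to construct a map of homotopy pullback diagrams from the Hopkins--Singer cochain model \eqref{eq:htpysquare} into the sheaf-theoretic square \eqref{eq:SheafSquare}, using the Dold--Kan / nerve functor to view the cochain complexes $C^*(EG\times_G M;-)$ and $\O^*_G(M)$ as the global sections of the relevant sheaves, then invoke the five lemma on the induced maps of short exact sequences to conclude the comparison is an isomorphism. \textbf{The hard part will be} Step 3: matching the \emph{topological} input of the cochain model, which uses the space $EG\times_G M$ and singular cochains $C^*(EG\times_G M;-)$, against the \emph{sheaf-theoretic} input, which uses $\N(\En G\times_G M)$ and its cohomology as a stack. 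One must show these two models for ordinary Borel cohomology agree naturally and compatibly with the de Rham data --- i.e.\ that the homotopy type of the stack $\En G\times_G M$ realizes to $EG\times_G M$ and that the Weil homomorphism $\O^*_G(M)\to C^*(EG\times_G M;\R)$ is compatible with the de Rham realization of forms on the stack. This compatibility is exactly where \cite{1311.3188} does the heavy lifting, reducing the problem to the already-established agreement of the two pullback squares on homotopy/cohomology groups, after which naturality of the five lemma finishes the argument.
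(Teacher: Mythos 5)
Your proposal is correct and takes essentially the same route as the paper: both sides are written as homotopy pullbacks (the cochain model via Dold--Kan, the sheaf side via the defining square for $\K(\Z,n)$), and the comparison is made corner by corner using the Freed--Hopkins isomorphism for the de Rham corner and the homotopification equivalence $\h(\N(\En G\times_G M))\simeq EG\times_G M$ from \cite{1311.3188} (Proposition \ref{prop:EnGHtpy}) together with the Hopkins--Singer identification of $\S(K(A,n)^X)$ with the Dold--Kan of singular cochains for the two topological corners. The only cosmetic difference is that the paper concludes directly from the equivalence of the three outer corners of the two pullback squares, rather than passing through Mayer--Vietoris sequences and the five lemma as in your Steps 1--2, which your Step 3 in any case supersedes.
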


This leads to the following commutative square.
\begin{equation}\label{eq:StackSquare} \vcenter{\xymatrix{ \HG[]^n(\En G \times_G M) \ar[d] \ar[r] & H^n(\En G \times_G M; \Z) \iso H^n_G(M;\Z) \ar[d] \\ 
\O^n_G(M)_\cl \iso \O^n_{\cl}(\En G \times_G M) \ar[r] & H^n(\En G \times_G M; \R) \iso H^n_G(M;\R) } }\end{equation}
The isomorphism in the bottom-left corner is given by the Freed--Hopkins isomorphism \eqref{eq:EnGForms}.  The two isomorphisms on the right are induced by the map $EG\times_G M \to \En G\times_G M$, which induces an isomorphism in cohomology (Propositon \ref{prop:StackCohIso}).  In the following sections, we will use these two properties to check that maps of stacks induce the desired maps at the level of differential forms and cohomology.

\begin{example}Since $\O^{2k-1}(\Bn G) = 0$, the short exact sequence \eqref{ses2} gives an isomorphism
\[ \HG[]^{2k} (\Bn G) \xrightarrow{\iso} H^{2k}(BG;\Z).\]
The Cheeger--Chern--Simons refinement of Chern--Weil theory \cite{CS85} can then be repackaged in the following way.  A principal $G$-bundle with connection $(P,\Theta) \to X$ is equivalent to a map 
\[ X \xrightarrow{(P,\Theta)} \Bn G,\]
and this induces a homomorphism
\[ \HG[]^{2k}(X) \xleftarrow{(P,\Theta)^*} \HG[]^{2k}(\Bn G) \iso H^{2k}(BG;\Z).\]
Hence, any universal characteristic class in $H^{2k}(BG;\Z)$ has a canonical differential refinement.
\end{example}

\section{Constructions}\label{sec:Examples}
We now explain how some important constructions, frequently described using classifying spaces or equivariant forms,are naturally given by explicit geometric constructions involving bundles with connection.

\subsection{Associated bundles}\label{subsec:Associated}
Let $\phi\colon  G_1 \to G_2$ be any Lie group homomorphism, with $\phi_*\colon  \fg_1\to \fg_2$ the associated Lie algebra homorphism.  The associated bundle construction, which functorially makes any $G_1$-bundle with connection $(P,\Theta)$ into a $G_2$-bundle with connection, induces morphisms
\[ \xymatrix{ \En G_1 \ar[d] \ar[r] &\En G_2 \ar[d]\\
\Bn G_1 \ar[r] & \Bn G_2.}\]
The most common example is when $\phi$ is an inclusion, which is usually referred to as extending the structure group.  For more details on the associated bundle construction, see Chapter II.6 of \cite{KN63}, or Section 1 of \cite{Fre95}.

If $(P,\Theta) \in \Bn G_1 (X)$, the associated $G_2$-bundle is defined 
\begin{equation}\label{eq:AssocBundle} P_\phi \= P \times_{\phi} G_2 = (P\times G_2)/G_1 = \left( P \times G_2 \right)\big/ \big((pg,f) \sim (p,\phi(g)f)\big).\end{equation}
To define the induced connection $\Theta_\phi$ on $P_\phi$, first note that the natural map $P \xrightarrow{\varphi} P_\phi$, given by $p \mapsto [(p,1)]$, is $\phi$-equivariant.  Hence, the horizontal subspaces of $TP$ are mapped equivariantly into $TP_\phi$, and the image extends uniquely to an equivariant horizontal distribution in $TP_\phi$.  The induced connection $\Theta_\phi$ can also be described as the unique connection on $P_\phi$  that is compatible with $\Theta$ in the sense that 
\begin{equation}\label{eq:AssocConn} \varphi^*(\Theta_\phi) = \phi_* (\Theta) \in \O^1(P;\fg_2). \end{equation}
Therefore, the associated bundle construction is compatible with the Weil homomorphism, giving the commutative diagram
\begin{equation}\label{eq:WeilGroupChange} \vcenter{ \xymatrix{ \W(\fg_1) \ar[r]^{\Theta^*} &\O(P) \\
\W(\fg_2) \ar[u]_{\phi^*} \ar[r]^{\Theta_\phi^*} & \O(P_\phi). \ar[u]_{\varphi^*}
} }\end{equation}


\begin{prop}\label{prop:GroupChange}Let $F\colon  M \to N$ be a $\phi$-equivariant map, where $\phi\colon  G_1 \to G_2$ is a Lie group homomorphism.
\begin{enumerate}
\item \label{GroupChange1} The associated bundle construction gives a natural morphism
\[ \En G_1 \times_{G_1} M \xrightarrow{F_\phi} \En G_2 \times_{G_2} N.\]

\item \label{GroupChange2}The homomorphism induced by $F_\phi$ on differential forms is naturally isomorphic to $\phi^* \otimes F^*$ in the Weil model.
\[ \xymatrix@R=0cm{ \O_{G_2}(M) \ar[r]^{\iso} & \O(\En G_2 \times_{G_2} N) \ar[r]^{F_\phi^*} & \O(\En G_1 \times_{G_1} M)  & \O_{G_1}(M) \ar[l]_{\iso}\\
\cap &&& \cap \\
\W(\fg_2) \otimes \O(N) \ar[rrr]^{\phi^* \otimes F^*} &&& \W(\fg_1) \otimes \O(M) } \]

\item \label{GroupChange3}The homomorphism induced by $F_\phi$ on ordinary cohomology is naturally isomorphic to the homomorphism induced by the maps of classifying spaces
\[ EG_1 \times_{G_1} M \xrightarrow{} (EG_1 \times_\phi G_2) \times_{G_2} N \xrightarrow{} EG_2 \times_{G_2} N.\]

\item \label{GroupChange4} The induced homomorphism 
\[ \HG[G_2]^*(N) \xrightarrow{F_\phi^*} \HG[G_1]^*(M) \]
recovers the expected homomorphisms in equivariant cohomology and equivariant forms.
\end{enumerate}\end{prop}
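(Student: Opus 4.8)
The plan is to establish the four parts in order, with (1) and (2) carrying the geometric content and (3), (4) following by comparison with structures already in place. For part \ref{GroupChange1}, I would define $F_\phi$ directly on objects and morphisms of the groupoids. Given a test manifold $X$ and an object $(P,\Theta,f)\in(\En G_1\times_{G_1}M)(X)$ — a principal $G_1$-bundle with connection together with a $G_1$-equivariant map $f\colon P\to M$ — I apply the associated bundle construction \eqref{eq:AssocBundle}, \eqref{eq:AssocConn} to get $(P_\phi,\Theta_\phi)$, and I define the required $G_2$-equivariant map $\tilde f\colon P_\phi\to N$ by $\tilde f([p,h]) = F(f(p))\cdot h$. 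That $\tilde f$ is well defined and $G_2$-equivariant is immediate from the $\phi$-equivariance $F(m\cdot g)=F(m)\cdot\phi(g)$ together with $[pg,h]=[p,\phi(g)h]$. Naturality in $X$ and compatibility with morphisms (connection-preserving isomorphisms covering the maps to $M$) follow because the associated bundle construction commutes with pullback and preserves such isomorphisms; this also supplies the coherence natural transformations $\Psi(f)$ needed for a morphism of stacks.

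For part \ref{GroupChange2}, I would use the Freed--Hopkins isomorphism \eqref{eq:EnGForms} to identify forms on the two stacks with their Weil models, and then evaluate $F_\phi^*$ on an arbitrary test object $(P,\Theta,f)$. The two essential inputs are already recorded: the natural map $\varphi\colon P\to P_\phi$ satisfies $\varphi^*\circ\Theta_\phi^* = \Theta^*\circ\phi^*$ on $\W(\fg_2)$ by \eqref{eq:WeilGroupChange}, while $\tilde f\circ\varphi = F\circ f$ gives $\varphi^*\tilde f^* = f^*F^*$. Since forms on the quotient stacks are computed by the recipe $\omega\otimes\eta\otimes\psi\mapsto \omega(\Omega^{\wedge i})\wedge\eta(\Theta^{\wedge j})\wedge f^*\psi$ of Section \ref{subsec:FreedHopkins} and then identified via $\O(P)_\basic\iso\O(X)\iso\O(P_\phi)_\basic$, pulling back along $\varphi$ (which covers the identity on $X$) and substituting the two relations above turns the evaluation of $\omega\otimes\eta\otimes\psi$ on $(P_\phi,\Theta_\phi,\tilde f)$ into the evaluation of $(\phi^*\otimes F^*)(\omega\otimes\eta\otimes\psi)$ on $(P,\Theta,f)$. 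This is exactly the asserted commuting square.

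For part \ref{GroupChange3}, I would exhibit the comparison map $EG_i\times_{G_i}(-)\to \En G_i\times_{G_i}(-)$ of Proposition \ref{prop:StackCohIso} and show $F_\phi$ is compatible with it. The universal $G_1$-bundle with connection, restricted to finite-dimensional approximations, is the object that defines this comparison map over $EG_1\times_{G_1}M$; applying $F_\phi$ to it yields the associated bundle $EG_1\times_\phi G_2$ with its induced connection and equivariant map to $N$, and the classifying map for $EG_1\times_\phi G_2$ realizes precisely the stated composite $EG_1\times_{G_1}M\to(EG_1\times_\phi G_2)\times_{G_2}N\to EG_2\times_{G_2}N$. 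Thus the square relating the two comparison maps and $F_\phi$ commutes, and applying $H^n(-)$ with the isomorphisms of \eqref{eq:StackSquare} identifies $F_\phi^*$ on ordinary cohomology with the classifying-space map. I expect the main obstacle to be here: because $EG$ is a colimit of finite-dimensional manifolds, making the comparison map and the classifying map for $EG_1\times_\phi G_2$ rigorous requires working at the level of approximations and passing to the limit, and one must check that the resulting classifying map is well defined up to the homotopy that cohomology requires.

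Part \ref{GroupChange4} should then be formal. The definition $\HG[G_i]^*(-)\=\HG[]^*(\En G_i\times_{G_i}(-))$ together with Theorem \ref{thm:TwoDefnsAgree} makes $F_\phi^*$ functorial on differential equivariant cohomology, and parts \ref{GroupChange2} and \ref{GroupChange3} identify its effect on the curvature $\curv$ and the characteristic cocycle $\cc$ with the expected maps on $\O_G^*$ and on $H^*_G(-;\Z)$. Since these two compatible pieces of data determine a map respecting the character diagram, $F_\phi^*$ recovers the homomorphism $\HG[G_2]^*(N)\to\HG[G_1]^*(M)$ of Theorem \ref{thm:Properties}\ref{thm1group}.
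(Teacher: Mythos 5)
Your proposal is correct and follows essentially the same route as the paper's proof: part (1) via the associated bundle construction (you phrase it with the equivariant map $\tilde f([p,h])=F(f(p))\cdot h$ where the paper uses the equivalent section/descended-map $\overline{\varphi\times F}$ picture), part (2) by tracing the Freed--Hopkins evaluation through \eqref{eq:WeilGroupChange}, part (3) by the homotopy-commuting square with the classifying map of $EG_1\times_\phi G_2$ and Proposition \ref{prop:StackCohIso}, and part (4) formally from (2), (3), and \eqref{eq:StackSquare}. No gaps.
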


Before giving the proof of Proposition \ref{prop:GroupChange}, we wish to first introduce notation for three special cases.  In two of the cases, the general construction simplifies, so we also show what happens when the stack is evaluated on a general manifold $X$. 
\begin{example} Suppose that $M$ and $N$ are $G$-manifolds, and $M \xrightarrow{F} N$ is $G$-equivariant.  The morphism of stacks
\begin{align*}&\En G \times_G M &&\xrightarrow{F_G} &&\En G \times_G N \\ 
&\vcenter{\xymatrix@R=.6cm{ (P,\Theta) \ar[r]^{f} \ar[d] & M \\ X
}}  &&\longmapsto  &&\vcenter{\xymatrix@R=.6cm{ (P,\Theta) \ar[d] \ar[r]^{f} & M \ar[r]^{F} &N \\ X  }}
\end{align*}
induces the homomorphism of abelian groups
\[ \HG^*(N) \xrightarrow{F_G^*} \HG^*(M).\]

\end{example}

\begin{example} Let $M$ be a $G_2$-manifold.  A homomorphism $G_1 \xrightarrow{\phi} G_2$ naturally defines a $G_1$-action on $M$ via $g_1 \cdot M \= \phi(g_1)\cdot M$, and the identity map $M \xrightarrow{\1}M$ is $\phi$-equivariant.  The associated bundle construction then induces
\begin{align*} \En G_1 \times_{G_1} M &\xrightarrow{\1_\phi} \En G_2 \times_{G_2} M, \\
\HG[G_1]^*(M) &\xleftarrow{\1_\phi^*} \HG[G_2]^*(M).
\end{align*}
\end{example}

\begin{example} Suppose that $K \vartriangleleft G$ is a normal subgroup, and suppose the quotient map $M \xrightarrow{q} M/K$ is a smooth map between manifolds.  Then, $q$ is equivariant with respect to the quotient group homomorphism $G \xrightarrow{/K} G/K$.  Quotienting $M$ and $G$ by $K$ induces the stack morphism 
\begin{align*}
&\En G \times_G M &&\xrightarrow{q_{/K}} &&\En (G/K) \times_{G/K} M/K \\ 
&\vcenter{ \xymatrix@R=.7cm{ (P,\Theta) \ar[d] \ar[r]^{f} &M \\ X }}  &&\xmapsto{\quad\,\,} &&\vcenter{\xymatrix@R=.7cm{(P,\Theta) \ar@{..>}[d] \ar@{..>}[r]^{f} &M \ar@{..>}[d]^{q} \\ (P_{/K}, \Theta_{/K}) \ar[r]^{f_{/K}} \ar[d] &M/K \\ X }}
\end{align*}
(with dotted arrows indicating maps no longer being used) and a homomorphism 
\[ \HG[G/K]^*(M/K) \xrightarrow{q_{/K}^*} \HG^*(M). \]  
If $K$ acts freely, this induces the usual isomorphism
\begin{align}\label{eq:QuotientFree}H^*_{G/K}(M/K;-) \xrightarrow[q_{/K}^*]{\iso} H^*_G(M;-) \end{align}
in equivariant cohomology.
\end{example}

\begin{proof}[Proof of Proposition \ref{prop:GroupChange}]
Part \eqref{GroupChange1} is given by constructing, for every $X$ and natural with respect to maps $X\to Y$, the following functor.
\begin{align*}  &\left(\En G_1 \times_{G_1} M\right) (X) &&\xrightarrow{F_\phi(X)} &&\left(\En G_2 \times_{G_2} N\right) (X) \\
&\vcenter{\xymatrix{ (P,\Theta) \times_{G_1} M \ar[d] \\ X \ar@/_1pc/[u]_{s} }}  &&\xmapsto{\quad \quad}
&&\vcenter{\xymatrix{ (P_\phi, \Theta_\phi) \times_{G_2} N \ar[d] \\ X \ar@/_1pc/[u]_{\left(\overline{\varphi \times F}\right) \o s}  }}
 \end{align*}
To explain in a bit more detail, the associated bundle construction associates to $(P,\Theta)$ the bundle $(P_\phi, \Theta_\phi)$, along with a $\phi$-equivariant map $P \xrightarrow{\varphi} P_\phi$.  Since $M \xrightarrow{F} N$ is also $\phi$-equivariant, the map
\[ P \times M \xrightarrow{ \varphi \times F} P_\phi \times N   \]
descends to the quotient
\[ P \times_{G_1} M \xrightarrow{ \overline{\varphi \times F}} P_\phi \times_{G_2} N,  \]
and we precompose with original section $s$.  The associated bundle construction is functorial, so any morphism $\Phi$ in $\left( \En G_1 \times_{G_1} M\right)(X)$ will induce a morphism $F_\phi(\Phi)$ in $\left( \En G_1 \times_{G_1} M\right)(X)$, as evidenced by the following commutative diagram.
\[  \xymatrix@R=.5cm{ &(P,\Theta) \times_{G_1} M \ar[dl] \ar[dd]_{\iso}^{\Phi} \ar[r]_{\overline{\varphi \times F}}& (P_\phi, \Theta_\phi) \times_{G_2} N \ar[dd]_{\iso}^{\Phi_\phi} \\
X \ar@/^1pc/[ur]^{s} \ar@/_1pc/[dr]_{s'} \\
&(P',\Theta') \times_{G_1} M  \ar[ul] \ar[r]^{\overline{\varphi' \times F}} & (P'_\phi, \Theta'_\phi) \times_{G_2} N }\]
Hence, the associated bundle construction induces a morphism of stacks.

For part \eqref{GroupChange2}, we simply trace through the construction of $F_\phi$ and the Weil homomorphism.  Let $\omega \in \O (\En G_2 \times_{G_2} N)$, which may be written as a sum of homogeneous elements $\alpha \otimes \psi \in \left(\W(\fg_2) \otimes \O(N)\right)_{\basic}$.  Then, calculate $F_\phi^*\omega \in \O(\En G_1 \times_{G_1} M)$ by evaluating on a test manifold $X$.  By definition, 
\begin{align*} &\left( \En G_1 \times_{G_1} M\right)(X) & &\xrightarrow{F_\phi} &&\left(\En G_2 \times_{G_2} N \right)(X) &&\xrightarrow{\omega} &&\O(X) \\
&\vcenter{ \xymatrix{ (P,\Theta) \times_{G_1} M \ar[d] \\ X \ar@/_1pc/[u]_{s} } }  &&\longmapsto &&
 \vcenter{ \xymatrix{ (P_\phi, \Theta_\phi) \times_{G_2} N \ar[d] \\ X \ar@/_1pc/[u]_{(\overline{\varphi \times F}) \o s}  }} &&\longmapsto & s^*  \Big( (\overline{\varphi \times F})^*& \left( \alpha(\Theta_{\phi}) \otimes \psi \right) \Big)
\end{align*}
As noted in \eqref{eq:WeilGroupChange}, the associated bundle construction is compatible with the Weil homomorphism, and therefore
\[
(\varphi \times F)^* \big( \alpha(\Theta_\phi) \otimes \psi \big) = \alpha( \varphi^*\Theta_\phi) \otimes F^*\psi 
= \alpha( \phi_*\Theta) \otimes F^*\psi = (\phi^*\alpha) (\Theta) \otimes F^*\psi.
\]
Consequently, $F_\phi^* \omega$ gets mapped to $s^*\big( (\phi^*\alpha) (\Theta) \otimes F^*\psi\big) \in \O(X)$, and this is the same element that $\phi^*\alpha \otimes F^*\psi \in \O_{G_1}(M)$ maps to.  

To see part \eqref{GroupChange3}, choose a classifying map for the $G_2$-bundle $EG_1 \times_\phi G_2$, which gives the following diagram.  We do not need to assume the classifying map is connection preserving.
\[ \xymatrix{ EG_1 \times_{G_1} M \ar[r] \ar[d] \ar@/^1pc/[rr]^{\Psi} & EG_1 \times_\phi G_2 \ar[r] \ar[d]& EG_2 \ar[d] \\ BG_1 \ar[r]^{=} & BG_1 \ar[r] & BG_2 
}\]
For any $X\xrightarrow{f}BG_1$, there is an isomorphism of associated bundles
\[ f^* (\Psi^* EG_2) \iso (f^*EG_1)_\phi.\]
This implies the diagram
\[  \xymatrix{ EG_1 \times_{G_1} M \ar[r] \ar[d] & EG_2 \times_{G_2} M \ar[d] \\ \E G_1 \times_{G_1} M \ar[r] & \E G_2 \times_{G_2} N } \]
commutative up to isomorphism, or homotopy-commutative.  As shown in Proposition \ref{prop:StackCohIso}, the vertical maps induce isomorphisms in ordinary cohomology.  Therefore, the induced map on the cohomology of the stacky homotopy quotients
\[ H^*(\En G_2 \times_{G_2} N;A) \xrightarrow{F^*_\phi} H^*(\En G_1 \times_{G_1} M;A) \] 
is given by the usual map between the homotopy quotient spaces.

Finally, part \eqref{GroupChange4} follows immediately from parts \eqref{GroupChange2} and \eqref{GroupChange3}, and \eqref{eq:StackSquare}.
\end{proof}

\subsection{Pulling back equivariant bundles}\label{subsec:Reduction} As discussed in the previous section, if $G$ acts freely on a manifold $P$, there is a natural homomorphism $\HG[]^*(P/G) \xrightarrow{q_{/G}^*} \HG^*(P)$ inducing the standard isomorphism in cohomology.  We now describe the one-sided inverse to this map.  First we outline the  result, then we give the construction, and then we check the details.

To obtain a map in the opposite direction, one must choose a connection $\Theta$ on $P$.  Using this and denoting $M=P/G$, one naturally obtains a morphism of stacks
\[ M \overset{\Theta}\longrightarrow \En G\times_G P \]
by pulling back the bundle $(P,\Theta)$, as indicated below when the stack is evaluated on a manifold $X$.
\begin{align*}
\vcenter{ \xymatrix@C=.5cm{ X \ar[r]^{f} &M }} &\longmapsto \vcenter{ \xymatrix{ f^*(P,\Theta) \ar[d] \ar[r]^{\wt{f}} &(P,\Theta) \ar@{..>}[d] \\ X \ar@{..>}[r]^{f} & M } }
\end{align*}
Proposition \ref{prop:Reduction} will imply that the induced map 
\[ \HG^*(P) = \HG[]^*(\En G \times_G P) \xrightarrow{\Theta^*} \HG[]^*(M) \]
recovers the isomorphism inverse to $q_{/G}^*$ at the level of cohomology, and it recovers the Weil homomorphism 
\begin{align*}
\O_G(P) = \big( \W(\fg)\otimes \O(P) \big)_{\basic} & \xrightarrow{\Theta^*} \O(P)_{\basic} \iso \O(M) \\
\alpha \otimes \psi \quad \quad &\longmapsto \quad \alpha(\Theta) \otimes \psi
\end{align*} 
at the level of differential forms.

The above is actually a special case of what happens when we have an equivariant principal bundle with connection.  Suppose that $(Q,\Theta) \to M$ is a $G$-equivariant principal $K$-bundle with $G$-invariant connection.  This implies that $Q$ is a $(G\times K)$-manifold, $M$ is a $G$-manifold, and the map $Q\to M$ is equivariant.  (Note that in the following discussion, $G$ will always act on the base manifold $M$, and $K$ will be the structure group for a principal bundle.)  The above construction generalizes to give a stack morphism
\[ \En G \times_G M \xrightarrow{\Theta_G} \En (G\times K) \times_{G\times K} Q, \]
and this gives the expected classical maps in cohomology and equivariant forms.  In particular, we get the following maps between short exact sequences.
\[ \xymatrix{ 0 \ar[r]& \frac{\O^{*-1}_{G\times K}(Q)}{\O^{*-1}_{G\times K}(Q)_\Z} \ar[r] \ar@/_1pc/[d]_{\Theta_G^*}&\HG[G\times K]^*(Q) \ar@/_1pc/[d]_{\Theta_G^*}\ar[r] & H^*_{G\times K}(Q;\Z) \ar[r] \ar@{<->}[d]^{\iso} &0 \\
0 \ar[r]& \frac{\O^{*-1}_G(M)}{\O_G^{*-1}(M)_\Z} \ar[r] \ar@/_1pc/[u]_{q_{/K}^*}&\HG[G]^*(M) \ar[r] \ar@/_1pc/[u]_{q_{/K}^*} & H_G^*(M;\Z) \ar[r] &0 
}\]
\[ \xymatrix{ 0 \ar[r] & H_{G\times K}^{*-1}(Q;\R/\Z) \ar@{<->}[d]^{\iso} \ar[r] & \HG[G\times K]^*(Q) \ar@/_1pc/[d]_{\Theta_G^*} \ar[r] & \O^*_{G\times K}(Q)_\Z \ar@/_1pc/[d]_{\Theta_G^*} \ar[r] &0 \\
0 \ar[r] & H_G^{*-1}(M;\R/\Z) \ar[r] & \HG[G]^*(M) \ar@/_1pc/[u]_{q_{/K}^*} \ar[r] & \O_G^*(M)_\Z \ar@/_1pc/[u]_{q_{/K}^*} \ar[r] &0 
} \]

Let us now give the precise construction in slightly greater generality.  Assume that $K$ is a normal subgroup of $\wt{G}$ with quotient $\wt{G}/K \iso G$, and furthermore assume that we have fixed a splitting of the Lie algebra, i.e.
\begin{align}\label{eq:KGassumption} \begin{split} 1 \to K \into &\wt{G} \to G \to 1 \\ \wt{\fg} \iso &\fg \oplus \fk. \end{split} \end{align}
Such a decomposition of Lie algebras must exist since our groups are compact.  Of primary interest is when $\wt{G} = G\times K$, but we wish to also allow examples such as $SU(n) \into U(n) \to U(1)$ or $U(1) \into {\rm Spin}^c(n) \to SO(n)$.

%

\begin{defn}For $M\in G\GMan$, let $\wt{G}\GBunc[K](M)$ be the groupoid of $\wt{G}$-equivariant principal $K$-bundles on $M$ with invariant connection.  An object $(Q,\Theta) \in \wt{G}\GBunc[K](M)$ is a principal $K$-bundle with connection on $M$ such that:
\begin{itemize}
\item $Q \in \wt{G}\GMan$ and $Q \overset{\pi}\to M$  is equivariant,
\item $\Theta \in \big( \O^1(Q) \otimes \fk\big)^{\wt{G}}$.
\end{itemize}
For $(Q_i, \Theta_i) \in \wt{G}\GBunc[K](M)$, a morphism $(Q_1,\Theta_1) \overset{\phi}\to (Q_2,\Theta_2)$ is a $\wt{G}$-equivariant map $Q_1 \overset{\phi}\to Q_2$ which is an isomorphism of $K$-bundles with connection.
\end{defn}

\begin{rem}For $\wt{G} = G \times K$, the condition $\Theta \in \big(\O^1(Q) \otimes \fk\big)^{G\times K}$ can be rewritten as $k^*\Theta = \Ad_{k^{-1}}\Theta$ and $g^*\Theta = \Theta.$  Also, in this case we may refer to $\wt{G}$-equivariant $K$-bundles as $G$-equivariant $K$-bundles.  This is a standard convention, and we hope it does not cause any confusion.  
\end{rem}

Given an element $(Q,\Theta) \in \wt{G}\GBunc[K](M)$, we want to define a natural map $\Theta_G$
\[ \xymatrix{ \En \wt{G} \times_{\wt{G}} Q \ar[r]_-{q_{/K}} & \En G \times_G M \ar@/_1pc/[l]_-{\Theta_G} 
} \]
such that the composition $q_K \o \Theta_G$ is naturally isomorphic to the identity.  We first describe the construction, hopefully emphasizing the geometric nature and naturalness.  The details, which are relatively straightforward, are then checked in Proposition  \ref{prop:Reduction}.  While the induced maps in cohomology and equivariant forms that we recover are well-known, and the construction of $\Theta_G$ appears implicitly in \cite{MR1850463}, we do not know any references where this construction is done in general at the level of principal bundles with connection.

We now give a functor between the two groupoids produced when the stacks are evaluated on a test manifold $X$.  Denoting $\Theta=\Theta_Q$ for added clarity, the map is given by the following construction.
\begin{align*}  &(\En G \times_G M ) (X) &&\xrightarrow{\Theta_G(X)} &&(\En \wt{G} \times_{\wt{G}} Q ) (X) \\
&\vcenter{ \xymatrix{& (Q,\Theta_Q) \ar[d]^{\pi_M} \\ (P, \Theta_P) \ar[r]^{f} \ar[d]^{\pi_X} & M \\X }} &&\xmapsto{\quad \quad}
&& \vcenter{ \xymatrix{\Big(f^*Q, \, \pi_P^*\Theta_P \oplus (\wt{f}^*\Theta_Q - \iota_{\pi_P^* \Theta_P}\wt{f}^*\Theta_Q) \Big) \ar[r]^-{\wt f} \ar@{..>}[d]^-{\pi_P} \ar@/_3pc/[dd]_{\pi'_X} &(Q,\Theta_Q) \ar@{..>}[d]^{\pi_M} \\ (P, \Theta_P) \ar@{..>}[r]^{f}  \ar@{..>}[d] & M \\X} }
 \end{align*}
Our new $\wt{G}$-bundle is given by pulling back $Q$.  The connection on $f^*Q$ is given by the pulling back the connections on $P$ and $Q$ and subtracting a correction term.  This extra term is defined to be the image 
\begin{align*}
\O^1(\wt{f}^*Q) \otimes \fg \otimes \O^1(\wt{f}^*Q) \otimes \fk & \longrightarrow \O^1(\wt{f}^*Q) \otimes \fk \\
\pi^*_P \Theta_P \otimes \wt{f}^*\Theta_Q &\longmapsto \iota_{\pi_P^* \Theta_P}\wt{f}^*\Theta_Q
\end{align*}
under the natural contraction $\fg \otimes \O^1(\wt{f}^*Q) \xrightarrow{\iota} \O^0(\wt{f}^*Q)$.  While this term may initially seem obscure, it is necessary to ensure that we produce a connection on $f^*Q$, and it has a simple description in the Weil model.

Define the equivariant extension of the connection and curvature forms by 
\begin{align}
\label{eq:equivConn} \Theta_G &\= \Theta - \iota_{\theta_\fg}\Theta &&\in \big( \W(\fg)\otimes \O(Q) \big)^1\otimes \fk, \\
\label{eq:equivCurv} \Omega_G &\= d_G \Theta_G + \tfrac{1}{2} [\Theta_G \wedge \Theta_G] &&\in  \big( \W(\fg)\otimes \O(Q) \big)^2 \otimes \fk.
\end{align}
To clarify, $\iota_{\theta_\fg} \Theta \in \L^1 \fg^* \otimes \O^0(Q) \otimes \fk$ and it is evaluated on elements $\xi \in \fg$ by contracting the connection along the vector field in $Q$ generated by the $G$-action; i.e.
\[ \langle \iota_{\theta_\fg} \Theta, \xi\rangle \= \iota_{\xi} \Theta \in \O^0(Q;\fk).\]
The $\fk$ portion of the connection on $\wt{f}^*Q$ is simply the image of the equivariant connection $\Theta_G$ under the Weil homomorphism 
\begin{align*}
\W(\fg)\otimes \O(Q) \otimes \fk &\xrightarrow{(\pi^*_P\Theta_P)^*\otimes \wt{f}^* \otimes 1} \O(\wt{f}^*Q) \otimes \fk \\
\Theta_G = \Theta - \iota_{\theta_\fg} \Theta &\xmapsto{\quad\quad\quad} \wt{f}^*\Theta_Q - \iota_{\pi_P^* \Theta_P}\wt{f}^*\Theta_Q.
\end{align*}

Note that the additional term $\iota_{\pi_P^* \Theta_P}\wt{f}^*\Theta_Q$ uses the connection on the $G$-bundle $P$ to detect the $G$-action on $Q$.  This is key to understanding the equivariant Chern--Weil homomorphism in Section \ref{sec:EquivCW}.

\begin{rem}\label{Rem:CartanCurv}In the Cartan model for equivariant forms (see Section \ref{subsec:CartanMQ}), the equivariant connection and curvature simplify to
\begin{align*}
\Theta_G &\longleftrightarrow \Theta &&\in S^0 \fg^* \otimes \O^1(Q)\otimes \fk, \\
\Omega_G &\longleftrightarrow \Omega - \iota_{\Omega_\fg} \Theta &&\in \Big(S^0 \fg^* \otimes \O^1(Q) + S^2\fg^* \otimes \O^0(Q)\Big) \otimes \fk,
\end{align*}
with $\Omega_G$ becoming the more familiar equivariant curvature defined in \cite[\textsection 2]{MR705039}.
\end{rem}

We proceed to check that $\Theta_G$ and $\Omega_G$ are equivariant forms and the above construction satisfies the desired properties.

\begin{lemma}\label{lem:equivConn}For $(Q,\Theta) \in \wt{G}\GBunc[K](M)$, the forms $\Theta_G$ and $\Omega_G$ are $\wt{G}$-invariant and $\fg$-horizontal; i.e. they live in the subcomplex $\left(\W(\fg)\otimes \O(Q) \otimes \fk\right)^{\wt{G}}_{\fg \text{-}\hor}$.  When $\wt{G} = G\times K$, this can be written
\[ \Theta_G \in \O^1_G(Q;\fk), \quad \Omega_G \in \O^2(Q;\fk).\]
\end{lemma}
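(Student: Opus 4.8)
The plan is to reduce the whole lemma to two facts about $\Theta_G$ — that it is $\fg$-horizontal and $\wt{G}$-invariant — and then deduce the corresponding statements for $\Omega_G$ formally from the structure equation \eqref{eq:equivCurv}. Throughout I would work inside the $\fg$-differential graded algebra $\W(\fg)\otimes\O(Q)$, whose contraction and Lie derivative operators $\iota_\xi, L_\xi$ (for $\xi\in\fg$) are the sums of the Weil-algebra operators with the operators induced by the fundamental vector fields of the $\wt{G}$-action on $Q$. The one point to keep straight is that $\fk\subseteq\wt\fg$ acts only on the $\O(Q)$ and $\fk$-coefficient factors, since $K$ acts trivially on $\W(\fg)=\W(\wt\fg/\fk)$.

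The key structural input, available by compactness, is that the splitting $\wt\fg\cong\fg\oplus\fk$ may be taken so that $\fg$ and $\fk$ are commuting ideals: a compact Lie algebra is reductive, any ideal is a sum of simple factors together with a central piece, and its orthogonal complement is the complementary such sum. Hence $[\fg,\fk]=0$ and $[\fg,\fg]\subseteq\fg$, and $\Ad_{\wt{G}}$ preserves $\fg$ and acts on it through $G=\wt{G}/K$. This is exactly what makes the canonical element $\theta_\fg\in\L^1\fg^*\otimes\fg$ and the contraction $\iota_{\theta_\fg}$ of \eqref{eq:equivConn} into genuinely $\wt{G}$-equivariant operations; in all the examples of interest ($G\times K$, $SU(n)\into U(n)$, $U(1)\into\mathrm{Spin}^c(n)$) one indeed has $[\fg,\fk]=0$.

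For $\fg$-horizontality I would compute $\iota_\xi\Theta_G$ directly. Writing $\iota_{\theta_\fg}\Theta=\sum_a\theta^a\otimes\iota_{e_a}\Theta$ in a basis $\{e_a\}$ of $\fg$ with dual generators $\theta^a\in\L^1\fg^*$, the Weil-algebra contraction $\iota^\W_{e_b}\theta^a=\delta^a_b$ together with the vanishing of $\iota^Q_{e_b}$ on the $0$-form $\iota_{e_a}\Theta$ gives $\iota_{e_b}(\iota_{\theta_\fg}\Theta)=\iota_{e_b}\Theta$, whence $\iota_{e_b}\Theta_G=\iota_{e_b}\Theta-\iota_{e_b}\Theta=0$; the correction term in \eqref{eq:equivConn} is designed precisely to cancel the $\fg$-vertical part of $\Theta$. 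For $\wt{G}$-invariance, $\Theta$ is $\wt{G}$-invariant by the definition of $\wt{G}\GBunc[K](M)$, and $\iota_{\theta_\fg}\Theta$ is the image of $\Theta$ under the $\wt{G}$-equivariant operation $\iota_{\theta_\fg}$ of the previous paragraph, so it is $\wt{G}$-invariant as well; therefore $\Theta_G$ is $\wt{G}$-invariant. Infinitesimally this says that in $L_{e_b}(\iota_{\theta_\fg}\Theta)$ the coadjoint action on the $\theta^a$ cancels the structure constants produced by $[L^Q_{e_b},\iota_{e_a}]=\iota_{[e_b,e_a]}$ (using $[\fg,\fg]\subseteq\fg$), while for $\xi\in\fk$ the leftover term $\sum_a\theta^a\otimes[\xi,e_a]$ vanishes because $[\fk,\fg]=0$.

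Finally, the properties of $\Omega_G=d_G\Theta_G+\tfrac12[\Theta_G\wedge\Theta_G]$ follow formally. Since $d_G$ commutes with the $\wt{G}$-action and the bracket is equivariant, $\Omega_G$ is $\wt{G}$-invariant. Horizontality comes from Cartan's formula $[d_G,\iota_\xi]=L_\xi$: for $\xi\in\fg$,
\[ \iota_\xi\Omega_G = \bigl(L_\xi\Theta_G - d_G\,\iota_\xi\Theta_G\bigr) + [\iota_\xi\Theta_G\wedge\Theta_G]=0, \]
using the horizontality and invariance of $\Theta_G$ just established. Thus $\Theta_G,\Omega_G\in\left(\W(\fg)\otimes\O(Q)\otimes\fk\right)^{\wt{G}}_{\fg\text{-}\hor}$, and for $\wt{G}=G\times K$ the condition $(\,\cdot\,)^{G\times K}_{\fg\text{-}\hor}$ unwinds to ``$G$-basic and $K$-invariant,'' giving the stated identifications. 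I expect the only delicate part of the argument to be the bookkeeping of the commuting-ideals structure, since it is precisely the $K$-invariance — equivalently the $\wt{G}$-equivariance of $\iota_{\theta_\fg}$ — that requires $[\fg,\fk]=0$ and would fail for a merely vector-space splitting.
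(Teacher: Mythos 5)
Your proposal is correct and follows essentially the same route as the paper's proof: a direct contraction computation showing $\iota_\xi\Theta_G=\iota_\xi\Theta-\iota_{\theta_\fg(\xi)}\Theta=0$, the $\wt{G}$-equivariance of the contraction $\iota_{\theta_\fg}$ (together with the invariance of $\Theta$ and of $\theta_\fg$) for invariance, and the Cartan identity \eqref{eq:gstar2} plus the Leibniz rule to transfer both properties formally to $\Omega_G$. The only difference is that you spell out why the splitting \eqref{eq:KGassumption} can and must be taken so that $\fg$ is an $\Ad_{\wt{G}}$-invariant complement (equivalently $[\fg,\fk]=0$ in the compact case), a point the paper compresses into the assertion that the contraction map $\fg\otimes\O^1(Q)\to\O^0(Q)$ is $\wt{G}$-equivariant.
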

\begin{proof}
To see that $\Theta_G$ is $\fg$-horizontal, let $\xi \in \fg$.  Then
\[ \iota_\xi \Theta_G = \iota_\xi \Theta - \iota_{\theta_\fg(\xi)} \Theta = \iota_\xi \Theta - \iota_\xi \Theta = 0.\]
We now show both terms in $\Theta_G$ are $\wt{G}$-invariant.  First note that $\Theta \in \left( \O^1(Q) \otimes \fk\right)^{\wt{G}}.$  To check the term $\iota_{\theta_{\fg}}\Theta$, observe that $\theta_{\fg} \in \left( \L^1 \fg^* \otimes \fg \right)^G = \left( \L^1 \fg^* \otimes \fg \right)^{\wt{G}}$, and the contraction map 
\begin{align*} \fg \otimes \O^1(Q) &\overset{\iota}\longrightarrow \O^0(Q) \\
\xi \otimes \omega &\longmapsto  \iota_{\xi} \omega 
\end{align*}
is $\wt{G}$-equivariant.  Hence,
\[ \iota_{\theta_{\fg}}\Theta \in \left( \L^1 \fg^* \otimes \O^0(Q) \otimes \fk\right)^{\wt{G}},\]
so $\Theta_G$ is $\fg$-horizontal and $\wt{G}$-invariant.

From the Leibniz rule for $\iota_\xi$ and the definition of tensor product of representations, it immediately follows that $[\Theta_G \wedge \Theta_G]$ is also $\fg$-horizontal and $\wt{G}$-invariant.  The standard  identities \eqref{eq:gstar1} and \eqref{eq:gstar2} imply that $d_G \Theta_G$ is also $\wt{G}$-invariant and $\fg$-horizontal.  Consequently, $\Omega_G$ is invariant and horizontal.
\end{proof}


\begin{prop}\label{prop:Reduction}Assume $\wt{G}/K \iso G$ satisfying \eqref{eq:KGassumption}.  Let $(Q,\Theta) \in \wt{G}\GBunc[K](M)$.
\begin{enumerate} 
\item\label{Reduction1} The above construction defines a natural morphism $\Theta_G$
\[ \xymatrix{ \En \wt{G} \times_{\wt{G}} Q \ar[r]_-{q_{/K}} & \En G \times_G M \ar@/_1pc/[l]_-{\Theta_G} 
} \]
such that the composition $q_{/K} \o \Theta_G$ is naturally isomorphic to the identity.
\item\label{Reduction2} The induced map 
\[ \O\big( \En \wt{G} \times_{\wt{G}} Q  \big) \xrightarrow{\Theta_G^*} \O\big( \En G \times_G M \big) \]
is equivalent to the homomorphism $\Theta_G^*\colon  \O_{\wt{G}}(Q) \to \O_G(M)$ induced from
\begin{align*}
\W(\fg)\otimes \W(\fk) \otimes \O(Q) &\overset{\Theta_G^*}\longrightarrow \W(\fg) \otimes \O(Q) \\
\alpha \cdot \beta \cdot \gamma &\longmapsto \alpha \cdot \beta(\Theta_G) \cdot \gamma.
\end{align*}

\item \label{Reduction3} The induced map in cohomology is the standard isomorphism
\[ H^*_{\wt{G}} (Q;-) \overset{\iso}\longrightarrow H^*_{G}(M;-). \]
\end{enumerate}
\end{prop}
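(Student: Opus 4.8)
The plan is to treat the three parts in order, with Part \eqref{Reduction1} carrying essentially all of the geometric content and Parts \eqref{Reduction2}--\eqref{Reduction3} following formally. Since $\Theta_G$ has already been written down explicitly on objects, the work in Part \eqref{Reduction1} is to check that the stated output is a legitimate object of $(\En\wt{G}\times_{\wt{G}}Q)(X)$ and that the assignment is functorial and natural in $X$. First I would verify that $f^*Q \to X$ is a principal $\wt{G}$-bundle: the pullback $f^*Q \to P$ is principal $K$, the base $P \to X$ is principal $G$, and these assemble into a principal $\wt{G}$-bundle via the extension $1 \to K \to \wt{G}\to G \to 1$ of \eqref{eq:KGassumption}, with a lift $\tilde g \in \wt{G}$ of $g\in G$ acting by a formula of the shape $(p,q)\cdot \tilde g = (pg, \tilde g^{-1}q)$ (up to the usual left/right conventions). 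The fiber-product condition $f(p)=\pi_M(q)$ is preserved because $f$ is $G$-equivariant, and freeness and transitivity on the fibers of $f^*Q \to X$ are immediate; the map $\wt f\colon f^*Q \to Q$ is then tautologically $\wt{G}$-equivariant.

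The central point of Part \eqref{Reduction1} is that the $\wt\fg = \fg\oplus\fk$-valued $1$-form $\pi_P^*\Theta_P \oplus (\wt f^*\Theta_Q - \iota_{\pi_P^*\Theta_P}\wt f^*\Theta_Q)$ is a genuine connection. For the reproduction of fundamental vector fields I would evaluate on $\zeta_\kappa$ for $\kappa\in\fk$ and on $\zeta_\xi$ for $\xi\in\fg$: on $\zeta_\kappa$ the $\fg$-part vanishes (it is pulled back along $\pi_P$) while $\wt f^*\Theta_Q$ returns $\kappa$ and the correction term drops out, and on $\zeta_\xi$ the $\fg$-part returns $\xi$ while the correction term is engineered to cancel $\wt f^*(\iota_{\zeta_\xi}\Theta_Q)$, leaving the $\fk$-part equal to zero. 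This last cancellation is precisely the $\fg$-horizontality of $\Theta_G$ recorded in Lemma \ref{lem:equivConn}, and the $\Ad$-equivariance axiom reduces similarly to the $\wt{G}$-invariance of $\Theta_G$ from that lemma together with the equivariance of $\Theta_P$. Naturality in $X$ and the passage of morphisms to morphisms are formal, as everything is assembled from pullbacks. Finally, $q_{/K}\o\Theta_G\iso\Id$ holds because quotienting $f^*Q$ by $K$ returns $(f^*Q)/K \iso P$, under which the $K$-basic form $\pi_P^*\Theta_P$ descends to $\Theta_P$ and $\wt f$ descends to $f$, recovering $(P,\Theta_P,f)$ up to canonical isomorphism.

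Part \eqref{Reduction2} I would prove exactly as in Proposition \ref{prop:GroupChange}\eqref{GroupChange2}. Using the Freed--Hopkins identification \eqref{eq:EnGForms} and the algebra isomorphism $\W(\wt\fg)\iso\W(\fg)\otimes\W(\fk)$ afforded by the splitting in \eqref{eq:KGassumption}, I would evaluate $\Theta_G^*(\alpha\cdot\beta\cdot\gamma)$ on a test object over $X$ and trace through the Weil homomorphism. The construction was arranged precisely so that the $\fk$-valued connection on $f^*Q$ is the image of $\Theta_G$ from \eqref{eq:equivConn} under the Weil homomorphism $\W(\fg)\otimes\O(Q)\otimes\fk \to \O(\wt f^*Q)\otimes\fk$; consequently $\beta\mapsto\beta(\Theta_G)$ while the $\fg$- and $\O(Q)$-factors pass through unchanged, yielding the asserted formula $\alpha\cdot\beta\cdot\gamma\mapsto\alpha\cdot\beta(\Theta_G)\cdot\gamma$. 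The result lands in $\O_G(M)$ since it is both $K$-basic and $\fg$-basic.

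Part \eqref{Reduction3} is then formal. Because $K\vartriangleleft\wt{G}$ acts freely on $Q$ with $Q/K\iso M$ and $\wt{G}/K\iso G$, the free-quotient isomorphism \eqref{eq:QuotientFree} (combined with Proposition \ref{prop:GroupChange}\eqref{GroupChange3} to identify the stacky $q_{/K}^*$ with the classical homotopy-quotient map) shows that $q_{/K}^*\colon H^*_G(M;-) \to H^*_{\wt{G}}(Q;-)$ is an isomorphism. Applying cohomology to $q_{/K}\o\Theta_G\iso\Id$ from Part \eqref{Reduction1} gives $\Theta_G^*\o q_{/K}^* = \Id$, whence $\Theta_G^*$ is the inverse isomorphism, as claimed. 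I expect the main obstacle to be Part \eqref{Reduction1}, and within it the verification that the correction term $\iota_{\pi_P^*\Theta_P}\wt f^*\Theta_Q$ produces an honest connection rather than merely a $\wt\fg$-valued $1$-form; this is where the horizontality half of Lemma \ref{lem:equivConn} does the essential work, and it is also the step most sensitive to the left/right conventions used in assembling $f^*Q$ as a $\wt{G}$-bundle.
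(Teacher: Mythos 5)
Your proposal is correct and follows essentially the same route as the paper's proof: Lemma \ref{lem:equivConn} supplies the $\fg$-horizontality and $\wt{G}$-invariance needed to verify that the corrected $1$-form is a connection, the fundamental-vector-field computation on $\fg\oplus\fk$ is the same, Part \eqref{Reduction2} is checked by evaluating on a test manifold and tracing the Weil homomorphism exactly as in Proposition \ref{prop:GroupChange}, and Part \eqref{Reduction3} is deduced formally from $q_{/K}\o\Theta_G\iso\Id$ together with \eqref{eq:QuotientFree}. The only cosmetic difference is that you assemble the $\wt{G}$-bundle structure on $f^*Q$ via the extension $1\to K\to\wt{G}\to G\to 1$, whereas the paper verifies freeness of the $\wt{G}$-action directly.
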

\begin{proof}
For  part \eqref{Reduction1}, we must check that our construction defines a principal $\wt{G}$-bundle with connection.  Since $P \xrightarrow{f} M \xleftarrow{\pi_M} Q$ are equivariant maps between $\wt{G}$-manifolds, the pullback $f^*Q$ is naturally a $\wt{G}$-manifold.  The freeness of the $\wt{G}$-action follows easily by the following argument.  Suppose $\wt{g}\in \wt{G}$ satisfies $\wt{g} (p,q) = (p,q)$ for some point in $f^*Q \subset P \times Q$; then $gp =p \in P$ implies $g=1$ and hence $\wt{g} \in K$.  Since $K$ acts freely on $Q$, then $\wt{g}=1$.  Hence, $f^*Q \to X$ is a principal $\wt{G}$-bundle.  To show that 
\begin{equation}\label{eq:newconnection}  \Theta_{f^*Q} = \pi_P^*\Theta_P \oplus \big(\wt{f}^*\Theta_Q - \iota_{\pi_P^* \Theta_P}\wt{f}^*\Theta_Q \big) \in \O^1(f^*Q)\otimes (\fg \oplus \fk)\end{equation}
is a connection, we must check it is $\wt{G}$-equivariant and restricts fiberwise to the Maurer--Cartan 1-form on $\wt{G}$.  By Lemma \ref{lem:equivConn}, we see that $\wt{f}^*\Theta_Q - \iota_{\pi_P^* \Theta_P}\wt{f}^*\Theta_Q$ must be $\wt{G}$-invariant and $\fg$-horizontal, since it is the image of $\Theta_G$ under the map
\[ \big(\W(\fg)\otimes \O(Q) \otimes \fk \big)^{\wt{G}}_{\fg \text{-}\hor} \longrightarrow \big( \O(f^*Q) \otimes \fk \big)^{\wt{G}}_{\fg\text{-}\hor}.  \]
This, combined with the fact that $\Theta_P \in \big(\O^1(P) \otimes \fg \big)^G$, implies that \eqref{eq:newconnection} is an element of $\big( \O^1(f^*Q) \otimes(\fg \oplus \fk)\big)^{\wt{G}}.$  Similarly, if $\xi_1 \oplus \xi_2 \in \fg \oplus \fk$, then
\begin{align*}
\iota_{\xi_1 \oplus \xi_2} &\big( \pi_P^*\Theta_P \oplus (\wt{f}^*\Theta_Q - \iota_{\pi_P^* \Theta_P}\wt{f}^*\Theta_Q) \big)  \\
&= \big( \iota_{\xi_1} \Theta_P \big) \oplus \big( \iota_{\xi_2} \Theta_Q \big) + 0\oplus \iota_{\xi_1}\big( \wt{f}^*\Theta_Q - \iota_{\pi_P^* \Theta_P}\wt{f}^*\Theta_Q\big) \\
&= \xi_1 \oplus \xi_2.
\end{align*}
Therefore, the morphism of stacks $\Theta_G$ is well-defined.  The map $\pi_P$ gives a natural isomorphism $(f^*Q)/K \iso P$ compatible with the connections.  Hence, $q_{/K} \o \Theta_G$ is naturally isomorphic to the identity on $\En G \times_G M$.  

The proof of part \eqref{Reduction2} is given by tracing through the Freed--Hopkins isomorphism, discussed in Section \ref{subsec:FreedHopkins}, and the definition of our map.  Essentially, we must show that the diagram 
\[ \xymatrix{ \O_{G\times K}(Q) \ar[d]^{\Theta_G^*} \ar[r]^-{\iso}& \O\left(\En (G\times K) \times_{G\times K} Q \right) \ar[d]^{\Theta_G^*} \\
\O_G(M) \ar[r]^-{\iso} & \O(\En G \times_G M)
}\]
commutes by evaluating on a test manifold $X$.  Consider a homogeneous element $\alpha  \beta  \gamma \in \O_{G\times K}(Q) \subset \W(\fg)\otimes \W(\fk) \otimes \O(Q)$.  We first check the clockwise direction.  Given $X \to \En G \times_G M$, we have 
\[ \vcenter{\xymatrix{ (P,\Theta_P) \ar[d] \ar[r]^f &M \\ X
} }  \quad \mapsto \quad \vcenter{ \xymatrix{ (f^*Q, \Theta_{f^*Q}) \ar[r]^{\wt f} \ar[d]^{\pi_P} & (Q,\Theta) \ar[d]\\ (P,\Theta_P) \ar[r]^{f} \ar[d] & M \\ X
} }\]
where 
\[ \Theta_{f^*Q} =  \pi_P^* \Theta_P \oplus \left( \wt f^*\Theta - \iota_{\pi_P^*\Theta_P} \wt f^*\Theta \right).\]
Therefore, $\alpha \beta \gamma \in \O_{G\times K}(Q)$ gets mapped to
\[ \alpha(\pi_P^*\Theta_P) \wedge \beta( \wt f^*\Theta - \iota_{\pi_P^*\Omega_P} \wt f^*\Theta) \wedge \wt f^* \gamma \in \O(f^*Q)_{G\times K \text{-} \basic} \iso \O(X).\]
Going counter-clockwise, we first have that $\alpha \beta\gamma \mapsto \alpha \beta(\Theta_G)\gamma$, which lives in \[\left( \W(\fg) \otimes \O(Q) \right)_{G\times K \text{-} \basic } \iso \left(\W(\fg) \otimes \O(M) \right)_{G \text{-} \basic}.\]
When we evaluate on $X \leftarrow (P,\Theta) \xrightarrow{f} M$, 
\[ \alpha \beta(\Theta_G) \gamma \quad \longmapsto \quad \alpha(\Theta_P) \beta(f^*\Theta - \iota_{\Theta_P} f^*\Theta) f^*\gamma \, \in \,  \O(P)_{G\text{-}\basic} \iso \O(X).\]
Pulling this back up to $f^*Q$ via $\pi_P^*$ gives us the form from the other direction.  Hence, they restrict to the same form in $\O(X)$.

Finally, part \eqref{Reduction1} implies that $\Theta_G^* \, \o\,  q_K^* = \Id$, and we know from \eqref{eq:QuotientFree} that $q_{/K}^*$ is an isomorphism in ordinary cohomology.  Therefore, part \eqref{Reduction3} follows immediately.
\end{proof}

\subsection{Uniqueness of $\HG^*$}\label{subsec:Uniqueness}

A natural question is whether one can give an axiomatic characterization of the functor $\HG^*$.  We now show that the character diagram, combined  with the maps $\HG^*(P) \xrightarrow{\Theta^*} \HG[]^*(M)$ for principal $G$-bundles with connection, uniquely characterize our differential extension of Borel equivariant cohomology.  The idea is to use $\Theta^*$ to regard $\HG^*(M)$ a subgroup of $\HG[]^*(EG\times_G M)$ and invoke the Simons--Sullivan axiomatic characterization of differential cohomology.  It is unknown to the author if the uniqueness result holds without assuming the additional structure maps $\Theta^*$.

\begin{prop}\label{prop:Uniqueness}If $\bH^*_G$ is a functor from $G$-manifolds to graded abelian groups satisfying parts \ref{thm1diag} and \ref{thm1conn} of Theorem \ref{thm:Properties}, then there is a natural equivalence $\bH^*_G \to \HG^*$ that commutes with the identity map on all other functors in the character diagram.
\end{prop}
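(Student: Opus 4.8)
The plan is to use the connection maps of property \ref{thm1conn} to realize both $\bH^*_G(M)$ and $\HG^*(M)$ as the \emph{same} subgroup of the ordinary differential cohomology $\HG[]^*(EG\times_G M)$, and then to lean on the Simons--Sullivan axiomatic characterization of $\HG[]^*$ to make that ambient identification canonical. Fix a $G$-manifold $M$ and work with finite-dimensional approximations $E_N G$ of $EG$, each equipped with a connection; the total space $E_N G \times M$ is a free $G$-manifold, and $(E_N G \times M,\, \Theta_{E_N G}) \to E_N G \times_G M$ is a principal $G$-bundle with connection. Composing the pullback along the equivariant projection $p\colon E_N G \times M \to M$ with the connection map of \ref{thm1conn} yields a natural homomorphism
\[ \Phi_N \= \Theta_{E_N G}^* \circ p^* \colon \bH^n_G(M) \longrightarrow \HG[]^n(E_N G \times_G M), \]
and passing to the limit (a standard $\lim^1$ argument, using that $E_N G$ becomes increasingly connected) gives $\Phi\colon \bH^n_G(M) \to \HG[]^n(EG\times_G M)$. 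The identical construction applied to the functor $\HG^*$ recovers the natural Weil inclusion $\HG^n(M)\into \HG[]^n(EG\times_G M)$.

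First I would show $\Phi$ is injective with image independent of the chosen functor. By property \ref{thm1diag}, $\Phi$ carries the character diagram of $\bH^*_G(M)$ into that of $\HG[]^*(EG\times_G M)$, so it is enough to examine $\Phi$ on the two ends of \eqref{ses1}. On the kernel-of-curvature end, $\Phi$ induces $H^{n-1}_G(M;\R/\Z) \to H^{n-1}(EG\times_G M;\R/\Z)$, the canonical isomorphism defining Borel cohomology; on the curvature end it induces the Weil homomorphism $\O^n_G(M)_\Z \to \O^n(EG\times_G M)_\Z$, which is injective by Lemma \ref{lem:WeilEGinj}. The five lemma then forces $\Phi$ to be injective, and, since curvature is surjective in \eqref{ses1}, its image is exactly
\[ S^n(M) \= \big\{\, \x \in \HG[]^n(EG\times_G M) \;\big|\; \curv(\x) \in \operatorname{im}\big(\O^n_G(M)_\Z \to \O^n(EG\times_G M)_\Z\big) \,\big\}. \]
This description refers only to the fixed functors $H^{*}_G(-;\R/\Z)$ and $\O^*_G(-)$ sitting along the bottom and left of the character diagram, so it is literally the same subgroup $S^n(M)$ for $\bH^*_G$ and for $\HG^*$.

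Because $\HG^*$ satisfies \ref{thm1diag} and \ref{thm1conn} as well, the same argument gives an isomorphism $\HG^n(M) \xrightarrow{\iso} S^n(M)$, and composing the two yields the desired natural isomorphism $\bH^n_G(M) \xrightarrow{\iso} \HG^n(M)$. Naturality in $M$ is inherited from that of $p^*$, of the connection maps, and of the Weil homomorphism, while commutativity with the identity on the corner functors $H^*_G(-;\Z)$, $H^*_G(-;\R/\Z)$, $H^*_G(-;\R)$, and $\O^*_G(-)$ follows because $\Phi$ was built to respect the character diagram. The Simons--Sullivan theorem enters precisely to guarantee that the ambient target $\HG[]^*(EG\times_G M)$ is canonically ordinary differential cohomology, so that the two embeddings land in a common group and the comparison is well posed. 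The hard part will be the careful treatment of the limit over the approximations $E_N G$ — controlling the $\lim^1$ term and checking that the image description $S^n(M)$ survives the passage to the limit — together with verifying that the composite $\Theta_{E_N G}^* \circ p^*$ genuinely induces the canonical isomorphism on the $\R/\Z$-Borel end of \eqref{ses1}, since that isomorphism is what powers the five-lemma step.
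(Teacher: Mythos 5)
Your proposal is correct and follows essentially the same route as the paper: both embed $\bH^*_G(M)$ and $\HG^*(M)$ into $\HG[]^*(EG\times_G M)$ via $\Theta_{EG}^*\circ p^*$, use Lemma \ref{lem:WeilEGinj} to see each lands injectively as a subgroup determined by the fixed corner functors, and invoke Simons--Sullivan to identify the ambient group canonically. The only cosmetic differences are that the paper runs the comparison through \eqref{ses2} rather than \eqref{ses1}, and it avoids your $\lim^1$ worry entirely by working with $(EG,\Theta_{EG})$ as a direct limit with cochains defined as inverse limits from the outset (Section \ref{subsec:deRhamModel}).
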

\begin{proof}
Assume that $\bH_G$ satisfies Theorem \ref{thm:Properties} parts \ref{thm1diag} and \ref{thm1conn}.  As described in Section \ref{subsec:deRhamModel}, let $(EG,\Theta_{EG}) \to BG$ be the universal bundle with connection obtained as a direct limit of smooth manifolds.  The projection map $EG \times M \to M$ is $G$-equivariant and can be combined with the universal connection to produce
\[ \bH_G^*(M) \to \bH_G^*(EG\times M) \xrightarrow{\Theta_{EG}^*} \bH^*(EG \times_G M). \]
On ordinary cohomology, this map is an isomorphism.  Since the Weil homomorphism  $\O_G(M) \to \O(EG\times_G M)$ is injective (Lemma \ref{lem:WeilEGinj}), it follows from the short exact sequence \eqref{ses2} that $\bH_G^*(M)$ is naturally an abelian subgroup of $\bH^*(EG\times_G M)$.  The same argument shows $\HG^*(M)$ is a subgroup of $\HG[]^*(EG\times_G M)$.  

In \cite{SS08a}, Simons--Sullivan show the character diagram uniquely characterizes ordinary differential cohomology, and there exists a unique natural transformation $\bH^* \xrightarrow{\iso} \HG[]^*$.  This isomorphism combines with \eqref{ses2} to give the following.
\[ \xymatrix{ 0 \ar[r] &\frac{\O^{*-1}_G(M)}{\O^{*-1}_G(M)_\Z} \ar[r] \ar@{^(->}[d] &\bH_G^*(M) \ar[r] \ar@{^(->}[d]& H^*_G(M;\Z) \ar[r] \ar[d]^{\iso}&0 \\
0 \ar[r] & \frac{\O^{*-1} (EG\times_G M)}{\O^{*-1} (EG\times_G M)_\Z} \ar[r]  & \HG[]^*(EG\times_G M) \ar[r] & H^*(EG\times_G M;\Z) \ar[r]&0 \\
0 \ar[r] & \frac{\O^{*-1}_G(M)}{\O^{*-1}_G(M)_\Z} \ar[r] \ar@{_(->}[u]& \HG^*(M) \ar[r] \ar@{_(->}[u] & H^*_G(M;\Z) \ar[r] \ar[u]_{\iso}&0
} \]
It immediately follows that $\bH_G^*(M)$ and $\HG^*(M)$ can be naturally considered as the same abelian subgroup of $\HG[]^*(EG\times_G M)$, and this gives us our desired natural isomorphism.
\end{proof}


\section{Equivariant Chern--Weil theory}\label{sec:EquivCW}  
The constructions in  Propositions  \ref{prop:GroupChange} and \ref{prop:Reduction} combine to give a geometric interpretation of equivariant Chern--Weil theory.  For simplicity, let $\wt{G}=G\times K$ in the following preliminary discussion.

Suppose that $M$ is a $G$-manifold and $(Q,\Theta) \to M$ is a $G$-equivariant principal $K$-bundle with $G$-invariant connection; i.e.  $(Q,\Theta)\in G\GBunc[K](M)$.  We briefly explain two classical ways to construct equivariant characteristic classes.  The first construction is purely topological and does not use the connection.  Consider the principal $K$-bundle $EG \times_G Q \to EG \times_G M$.  The choice of classifying map 
\begin{equation}\label{eq:ClassMapEquivBundle} EG \times_G M \longrightarrow BK.\end{equation}
induces a homomorphism
\[ H^*(BK;A) \longrightarrow H^*_G(M;A) \]
for any abelian group $A$.
The second method, due originally to Berline--Vergne  \cite{MR705039}, uses the connection to construct an equivariant differential form.  Given $\omega \in (S^n \fk^*)^K \iso H^{2n}(BK;\R)$, define the equivariant Chern--Weil form
\[ \omega(\Theta_G) \= \omega(\Omega_G^{\wedge n}) \in \O_G^{2n}(M). \]
As noted in Remark \ref{Rem:CartanCurv}, this becomes
\[ \omega\big( (\Omega - \iota_{\varOmega_\fg } \Theta)^{\wedge n} \big) \in \left( S \fg^* \otimes \O(M) \right)^G\]
in the more commonly used Cartan model.  While the form $\omega(\Theta_G)$ depend on the connection $\Theta$, its class in $H_G^{2n}(M;\R)$ does not.  These give equivariant Chern classes and forms when $K=U(n)$, and equivariant Pontryagin classes and forms when $K=SO(n)$.  In \cite{MR1850463}, Bott--Tu use the universal connection on $EG$ to show these two constructions give the same classes in $H^*_G(M;\R)$.  The following construction is similar to theirs,  but we replace $EG$ with $\En G$ and view everything in the world of stacks.  

Suppose that $(Q,\Theta) \in G\GBunc[K](M)$.  The following maps compose to give a refinement of \eqref{eq:ClassMapEquivBundle} to stacks.
\begin{equation}\label{eq:ChernWeil} \En G \times_G M \xrightarrow{\Theta_G} \En (G\times K) \times_{G\times K} Q \longrightarrow \Bn (G \times K)  \xrightarrow{q_{/G}} \Bn K.\end{equation}
The first map is given by the bundle pullback construction of Section \ref{subsec:Reduction}, the second is induced by the equivariant map $Q\to \pt$, and the third map is induced by the quotient $G\times K \xrightarrow{/G} K$.  When applied to a test manifold $X$, the composition performs the following construction.
\[  \vcenter{ \xymatrix{& (Q,\Theta_Q) \ar[d]^{\pi_M} \\ (P, \Theta_P) \ar[r]^{f} \ar[d]^{\pi_X} & M \\X }} \longmapsto  \vcenter{ \xymatrix{ \big(f^*Q, \pi_P^* \Theta_P \oplus (\wt f^* \Theta_Q - \iota_{\Theta_P} \wt{f}^* \Theta_Q) \big) \big/ G \ar[d] \\ X} } \]
It pulls back $Q$, modifies the connection to create a $G\times K$-bundle with connection, and then quotients by $G$.

\begin{thm}\label{thm:equivCCS}Associated to $(Q,\Theta) \in G\GBunc[K](M)$ is 
a natural morphism
\[ \En G \times_G M \xrightarrow{(Q_G,\Theta_G)} \Bn K, \]
such that the induced map 
\[ \HG[K]^* \xrightarrow{(Q_G,\Theta_G)^*} \HG^*(M)\] 
refines the traditional equivariant characteristic classes and forms as described above.
\end{thm}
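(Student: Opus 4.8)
The plan is to take the morphism $(Q_G,\Theta_G)$ to be the composite \eqref{eq:ChernWeil} and then to read off its effect on differential cohomology from the computations already recorded in Propositions \ref{prop:GroupChange} and \ref{prop:Reduction}, using the character square \eqref{eq:StackSquare} to package the curvature and characteristic-class data together. The first observation is that the morphism requires no new construction: the arrow $\En G\times_G M\xrightarrow{\Theta_G}\En(G\times K)\times_{G\times K}Q$ is the bundle-pullback morphism of Proposition \ref{prop:Reduction}\eqref{Reduction1}; the arrow $\En(G\times K)\times_{G\times K}Q\to\Bn(G\times K)$ is induced by the $(G\times K)$-equivariant map $Q\to\pt$, an instance of Proposition \ref{prop:GroupChange}\eqref{GroupChange1} with $\phi=\Id_{G\times K}$; and $q_{/G}\colon\Bn(G\times K)\to\Bn K$ is the associated-bundle morphism for the quotient homomorphism $G\times K\xrightarrow{/G}K$, again an instance of Proposition \ref{prop:GroupChange}\eqref{GroupChange1}. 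As each factor is natural in the test manifold and in $(Q,\Theta)$, the composite is a natural morphism of stacks and induces $\HG[K]^*\xrightarrow{(Q_G,\Theta_G)^*}\HG^*(M)$.

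Next I would compute the curvature. Since $\curv$ commutes with pullback by \eqref{eq:StackSquare}, it suffices to compute the induced map on equivariant forms. Composing the descriptions in Proposition \ref{prop:GroupChange}\eqref{GroupChange2} and Proposition \ref{prop:Reduction}\eqref{Reduction2}, an invariant polynomial $\omega\in(S^n\fk^*)^K\iso\O^{2n}(\Bn K)$ is carried by $q_{/G}^*$ to $1\otimes\omega\in\W(\fg)\otimes\W(\fk)$, then included as $1\otimes\omega\otimes 1$ by the map $Q\to\pt$, and finally sent by $\Theta_G^*$ to $\omega(\Theta_G)=\omega(\Omega_G^{\wedge n})\in\O^{2n}_G(M)$, with $\Omega_G$ the equivariant curvature \eqref{eq:equivCurv}. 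Under the Freed--Hopkins identification \eqref{eq:EnGForms} this is exactly the Berline--Vergne equivariant Chern--Weil form, so $\curv\bigl((Q_G,\Theta_G)^*\omega\bigr)=\omega(\Omega_G)$.

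I would then compute the characteristic class in the same way. Composing Proposition \ref{prop:GroupChange}\eqref{GroupChange3} with Proposition \ref{prop:Reduction}\eqref{Reduction3}, the induced map on ordinary cohomology factors as
\[ H^*(BK;A)\longrightarrow H^*(B(G\times K);A)\longrightarrow H^*_{G\times K}(Q;A)\xrightarrow[\iso]{\Theta_G^*}H^*_G(M;A),\]
where the first two arrows are the topological pullbacks along $B(G\times K)\to BK$ and along $Q\to\pt$, and the last is the standard reduction isomorphism. The key point to verify is that this composite equals the pullback along a classifying map $EG\times_G M\to BK$ for the principal $K$-bundle $EG\times_G Q\to EG\times_G M$ appearing in \eqref{eq:ClassMapEquivBundle}. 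This I would deduce from the homotopy-commutative square of Proposition \ref{prop:GroupChange}\eqref{GroupChange3}, together with the fact that, under the reduction isomorphism $H^*_{G\times K}(Q)\iso H^*_G(M)$, the $K$-bundle classified through $\En(G\times K)\times_{G\times K}Q\to\Bn K$ is identified with $EG\times_G Q$; hence $\cc\bigl((Q_G,\Theta_G)^*\omega\bigr)$ is the topological equivariant characteristic class.

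With both computations in hand, \eqref{eq:StackSquare} shows that the curvature and characteristic class of $(Q_G,\Theta_G)^*\omega$ are compatible and agree with the two traditional constructions, so $(Q_G,\Theta_G)^*$ refines the equivariant Chern--Weil forms and the topological equivariant characteristic classes simultaneously; specializing to $K=U(n)$ recovers Theorem \ref{thm:ChernWeil}. I expect the identification in the third paragraph---matching the abstract composite with an honest classifying map of $EG\times_G Q$---to be the only genuinely delicate step, since it is the one place where the connection-theoretic reduction of Proposition \ref{prop:Reduction} must be compared against a purely homotopy-theoretic classifying map rather than simply being quoted.
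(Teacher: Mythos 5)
Your proposal is correct and follows essentially the same route as the paper: the morphism is taken to be the composite \eqref{eq:ChernWeil}, the form computation traces $\omega\mapsto\omega(\Theta_G)$ through the Weil-model maps, and the cohomology statement reduces to identifying the pulled-back $K$-bundle with $EG\times_G Q$. The paper carries out the step you flag as delicate by a direct test-manifold diagram chase (diagram \eqref{eq:equivCCSdiag}), showing both ways around the square yield $s^*(EG\times_G Q)$, which is exactly the verification your sketch calls for.
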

\begin{proof}We know the morphism $\En G \times_G M \to \Bn K$ is well-defined, since it is defined in \eqref{eq:ChernWeil} as the composition of three morphisms.

We now must show that at the level of cohomology, $(Q_G,\Theta_G)^*$ is the homomorphism $\psi^*$, where $EG\times_G M \xrightarrow{\psi} BK$ is a classifying map for $EG\times_G Q \to EG\times_G M$.  By Proposition \ref{prop:StackCohIso}, the maps $EG\times_G M \to \En G \times_G M \to \E G \times_G M$ and $BK \to \Bn K \to \B K$  induce isomorphisms in cohomology.  Therefore, we must check that the diagram
\begin{equation}\label{eq:equivCCSdiag} \vcenter{ \xymatrix{ EG\times_G M \ar[d]_{\psi} \ar[r]  & \E G \times_G M \ar[d]^{Q_G} \\ BK \ar[r]  & \B K,} } \end{equation}
where the two horizontal morphisms given by viewing $EG\to BG$ and $EK \to BK$ as principal bundles, is commutative up to isomorphism.

We evaluate on a test manifold $X$ and trace through where $X\xrightarrow{s} EG\times_G M$ is sent, using the following diagram to identify maps.
\[ \xymatrix{ \wt{s}^*(\Pi_M^* Q) \ar[r]\ar[d] & \Pi_M^*Q \ar[r] \ar[d] & Q \ar[d] \\
s^*(EG\times M) \ar[d] \ar[r]^{\wt{s}} & EG \times M \ar[d] \ar[r]^-{\Pi_M} & M \\
X \ar[r]^{s} &EG\times_G M 
}\]
Going clockwise in \eqref{eq:equivCCSdiag}, we have
\[  X\xrightarrow{s} EG\times_G M  \;  \mapsto \;  \vcenter{ \xymatrix{s^*(EG\times M) \ar[r]^(.65){\Pi_M \o \wt{s} } \ar[d] & M \\ X} } \; \mapsto \; \vcenter{ \xymatrix{ \big((\Pi_M \o \wt{s})^*Q\big)/G \ar[d] \\ X } }, \]
and the evident isomorphism $\Pi_M^*Q \iso EG\times Q$ gives 
\[ \big( (\Pi_M \o \wt{s})^*Q \big)/G \iso \big( \wt{s}^*(EG\times Q) \big)/G  \iso s^* (EG\times_G Q). \]
Going counter-clockwise in \eqref{eq:equivCCSdiag}, we have
\[  X \xrightarrow{s} EG\times_G M   \; \mapsto \;   X \xrightarrow{\psi \o s} BK   \; \mapsto \;   \vcenter{ \xymatrix{ (\psi \o s)^* EK \ar[d] \\ X }}. \]
Since
\[ (\psi \o s)^* EK \iso s^*(\psi^*EK) \iso s^*(EG\times_G Q),\]
the diagram  \eqref{eq:equivCCSdiag} commutes up to isomorphism.

For differential forms, we trace through the composition \eqref{eq:ChernWeil}, which is given by the restriction of the following to the basic sub-complexes
\[ \W(\fk) \to \W(\fg) \otimes \W(\fk) \to \W(\fg)\otimes \W(\fk) \otimes \O(Q) \xrightarrow{\Theta^*_G} \W(\fg)\otimes \O(M).  \]
The first two maps are given by the mapping $\W(\fk)$ into its factor in the tensor product, so the composition is simply
\begin{align*}
\left( S \fk^*\right)^K = \W(\fk)_{\basic}  &\longrightarrow \left( \W(\fg) \otimes \O(M) \right)_{\basic} = \O_G(M) \\
\omega &\longmapsto \omega(\Theta_G),
\end{align*}
which is precisely the equivariant Chern--Weil form in the Weil model.
\end{proof}

One can also construct equivariant extensions of the Chern--Simons forms.  As explained in \eqref{eq:ChernSimonsWeil}, to $\omega \in (S^k\fk^*)^K$ is naturally associated a Chern--Simons form $\CS_\omega \in \W^{2k-1}(\fk)^K$ satisfying $d_\W \CS_\omega = \omega$.  Given $(Q,\Theta) \in G\GBunc[K](M)$, we define the equivariant Chern--Simons form to be the image of $\CS_\omega$ under the equivariant equivariant Weil homomorphism 
\begin{align}\label{eq:EquivChernSimonsForm}\begin{split}
\W^{2k-1} &\xrightarrow{\Theta_G^*} \O^{2k-1}_G(Q) \\
\CS_\omega &\longmapsto \CS_\omega(\Theta_G).  \end{split}
\end{align}
As in the case of differential characters, the Chern--Simons forms are closely related to the differential refinements of characteristic classes.  

\begin{prop}\label{prop:EquivChernSimons}Let $\check{\omega} \in \HG[]^{2k}(\Bn K) \iso H^{2k}(BK;\Z)$ be a differential refinement of $\omega \in (S^k \fk^*)^K$.  Suppose that $(Q,\Theta) \in G\GBunc[K](M)$ admits a $G$-equivariant section $s:M \to Q$.  Then 
\[ [ s_G^* \CS_\omega(\Theta_G) ] = \check{\omega}(\Theta_G) \in \HG^{2k}(M) .\]
\end{prop}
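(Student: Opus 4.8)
The plan is to show that both $\check\omega(\Theta_G)=(Q_G,\Theta_G)^*\check\omega$ and $[s_G^*\CS_\omega(\Theta_G)]$ lie in the subgroup $\frac{\O^{2k-1}_G(M)}{\O^{2k-1}_G(M)_\Z}$ of $\HG^{2k}(M)$ appearing in \eqref{ses2}, and then to pin down the single element of that subgroup representing $\check\omega(\Theta_G)$. First I would show $\cc(\check\omega(\Theta_G))=0$. By Theorem \ref{thm:equivCCS}, $\cc(\check\omega(\Theta_G))=\psi^*\omega$, where $\psi\colon EG\times_G M\to BK$ classifies $EG\times_G Q$. A $G$-equivariant section $s\colon M\to Q$ descends to a section $[e,m]\mapsto[e,s(m)]$ of the principal $K$-bundle $EG\times_G Q\to EG\times_G M$, so this bundle is trivial, $\psi$ is null-homotopic, and $\cc(\check\omega(\Theta_G))=0$. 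By the exactness of \eqref{ses2} this places $\check\omega(\Theta_G)$ in the image of $\frac{\O^{2k-1}_G(M)}{\O^{2k-1}_G(M)_\Z}$, where $[s_G^*\CS_\omega(\Theta_G)]$ visibly lives as well.

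Next I would match the curvatures. Since $d_\W\CS_\omega=\omega$, and the structure equation \eqref{eq:equivCurv} makes the equivariant Weil homomorphism $\W(\fk)\xrightarrow{\Theta_G^*}\O_G(Q)$ a map of complexes, we get $d_G\CS_\omega(\Theta_G)=\omega(\Theta_G)$ in $\O^{2k}_G(Q)$; this form is $K$-basic, hence the pullback along $\pi\colon Q\to M$ of the equivariant Chern--Weil form on $M$. Applying $s_G^*$ and using $\pi\o s=\Id_M$ yields $d_G\, s_G^*\CS_\omega(\Theta_G)=\omega(\Theta_G)\in\O^{2k}_G(M)$, which is exactly $\curv(\check\omega(\Theta_G))$ by Theorem \ref{thm:equivCCS}. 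Thus within the subgroup the two classes have equal curvature, so they differ by a flat element of $\frac{\O^{2k-1}_G(M)}{\O^{2k-1}_G(M)_\Z}$.

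The hard part is upgrading this curvature-agreement to honest equality, since the flat ambiguity $H^{2k-1}_G(M;\R)/H^{2k-1}_G(M;\R)_\Z$ is generally nonzero. I would remove it by reducing to the classical nonequivariant fact that the differential characteristic class of a connection on a \emph{trivialized} principal bundle is the image of its Chern--Simons form. Concretely, I would evaluate the map $\En G\times_G M\to\Bn K$ of \eqref{eq:ChernWeil} on a test object $(P,\Theta_P)\xrightarrow{f}M$ over $X$: the classified $K$-bundle is $(f^*Q)/G\to X$, and $p\mapsto(p,s(f(p)))$ is a $G$-equivariant section of $f^*Q\to P$ that descends to a genuine section $X=P/G\to(f^*Q)/G$. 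Invoking the classical statement and then tracing $s_G^*\CS_\omega(\Theta_G)$ through the Freed--Hopkins isomorphism \eqref{eq:EnGForms} identifies the two classes on every test manifold, hence as elements of $\HG^{2k}(M)$. The genuinely delicate point, buried inside the classical statement, is the comparison of the de Rham Chern--Simons transgression with the cocycle-level transgression supplied by the null-homotopy of $\psi$; naturality in $X$ is what guarantees these pointwise identifications assemble into the single desired equality.
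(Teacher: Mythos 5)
Your first two steps are sound and are implicitly present in the paper's argument as well: the equivariant section kills the characteristic class, and $d_\W\CS_\omega=\omega$ together with $\pi\o s=\Id_M$ matches the curvatures, so the two classes differ by a flat element of $H^{2k-1}_G(M;\R)/H^{2k-1}_G(M;\R)_\Z$. Where you genuinely diverge from the paper is in how that flat ambiguity is killed. The paper does it universally, in one stroke: because $EK$ is contractible, the character diagram for $\En K$ collapses to give $\HG[]^{2k}(\En K)\iso \W^{2k-1}(\fk)/d_\W\W^{2k-2}(\fk)$, so there is \emph{no} flat ambiguity upstairs and $\pi^*\check\omega=[\CS_\omega]$ is forced by curvature alone; the section $s_G$ then factors $(Q_G,\Theta_G)$ as $\En G\times_G M\xrightarrow{s_G}\En G\times_G Q\to\En K\xrightarrow{\pi}\Bn K$, and functoriality finishes the proof. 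Your route instead evaluates on every test object and invokes the classical Cheeger--Simons statement for trivialized bundles over each $X$. This can be made to work, but the two points you wave at are where the real content sits: (i) you must actually verify that the Freed--Hopkins evaluation of $s_G^*\CS_\omega(\Theta_G)$ on a test object $X\leftarrow(P,\Theta_P)\xrightarrow{f}M$ is the classical Chern--Simons form of the induced connection on $(f^*Q)/G$ relative to the descended section --- a computation of the same flavor as Proposition \ref{prop:Reduction}(2), not a formality; and (ii) equality of pullbacks to all test manifolds only yields equality in $\HG^{2k}(M)$ after you argue that a nonzero flat difference class in $H^{2k-1}_G(M;\R/\Z)$ is detected on finite-dimensional approximations of $EG\times_G M$, which requires the kind of argument underlying Proposition \ref{prop:StackCohIso} rather than being automatic. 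The factorization through $\En K$ is precisely what lets the paper sidestep both issues, which is why its proof is three lines; your approach is viable but trades that single universal computation for a family of pointwise ones plus a detection argument.
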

\begin{proof}
First note that because $EG$ is contractible, the character diagram for $\En G$ gives isomorphisms
\[ \xymatrix@R=3mm@C=10mm{& \HG[]^{2k}(\En G) \ar[dr]^{\iso} \\
{\displaystyle \frac{\W^{2k-1}(\fk)}{d_{\W}  \W^{2k-2}(\fk) }} \ar[ur]^{\iso} \ar_{d_{\W}}[rr]&& \W^{2k}(\fk)_0.
}\]
This gives the universal property $\pi^*\check{\omega} = [\CS_\omega] \in \HG[]^{2k}(\En K)$.

The construction \eqref{eq:ChernWeil} readily generalizes to the following diagram,
\[ \xymatrix@C=1.5cm{ \En G \times_G Q \ar[r] \ar[d] &\En K \ar[d]^{\pi} \\
\En G \times_G M \ar[r]^-{(Q_G,\Theta_G)} \ar@/_1pc/[u]_{s_G}  & \Bn K }\]
giving us two ways of writing the morphism $\En G \times_G M \to \Bn K$.  The bottom morphism induces $\check{\omega}(\Theta_G)$; the other one gives
\[\xymatrix@R=0mm{ \HG[](\Bn K) \ar[r] & \HG[](\En G) \ar[r] & \HG[](\En G\times_G Q) \ar[r] &  \HG[](\En G \times_G M) \\
\check{\omega} \ar@{|->}[r] & [\CS_\omega]  \ar@{|->}[r] & [\CS_\omega(\Theta_G)]  \ar@{|->}[r] & [s_G^*\CS_\omega(\Theta_G)] .
} \]
\end{proof}


\begin{rem}In the statement of Theorem \ref{thm:equivCCS}, we assumed that the total space of the bundle $Q$ had an action of $G\times K$.  Suppose instead that $(Q,\Theta) \in G\GBunc[K](M)$, where $\wt{G}/K\iso G$ with a fixed Lie algebra splitting \eqref{eq:KGassumption}.  It no longer makes sense to quotient by $G$ at the end, but we still have the composition
\[ \En G \times_G M \xrightarrow{\Theta_G} \En \wt{G} \times_{\wt{G}} Q \longrightarrow \Bn \wt{G}.\]
This induces differential equivariant characteristic classes via the homomorphism
\[ \HG[\wt{G}]^* \longrightarrow \HG^*(M).\]
Note that Proposition \ref{prop:EquivChernSimons} would not be applicable unless there exists an isomorphism $\wt{G} \iso G \times K$.  
\end{rem}

\subsection{Equivariant differential characters}\label{subsec:equivDC}
One convenient description of differential cohomology is via differential characters \cite{CS85}.  An element $\x \in \HG[]^n(M)$, with curvature $\curv(\x) = \omega$, associates to every smooth singular $(n-1)$-cycle $X_{n-1} \xrightarrow{z} M$ an element  $\langle \x, z \rangle \in \R/\Z$; if $z=\partial \wt{z}$ for some smooth $n$-chain $Y_n \xrightarrow{\wt{z}} M$, then
\[ \langle \x, z \rangle = \int_{Y_n} \wt{z}^{^*} \omega \mod \Z.\]
In degree 2, an $\R/\Z$-bundle with connection on $M$ naturally defines an element of $\HG[]^2(M)$ via its holonomy.  More generally, evaluating a differential character on a $(n-1)$-cycle can be thought of as the holonomy for some higher gerbe or abelian gauge field.  

This idea generalizes to the equivariant setting, with classes in $\HG^n(M)$ giving ``differential characters'' on the stack $\En G \times_G M$.  This type of structure naturally appears in the physics literature as WZW terms for gauged sigma models \cite{MR1311654}.  We state the main idea here, but we do not give enough details on singular cycles to consider this a definition of $\HG^*(M)$.

Assume that $X_{n-1}$ is a closed smooth manifold.  To any map 
\[ X_{n-1} \xrightarrow{z} \En G \times_G M,\]
a class $\x \in \HG^n(M)$ associates an ``equivariant holonomy'' in $\R/\Z$, as seen in the following diagram.  
\begin{align*}
\vcenter{ \xymatrix{(P,\Theta) \ar[r]^f \ar[d] & M \\ X_{n-1}} }  \quad &\Longrightarrow \quad \vcenter{\xymatrix{ \HG^n(P) \ar[d]^{\Theta^*} & \HG^n(M) \ar[l]_{f_G^*} \\ \HG[]^n(X_{n-1}) \iso \R/\Z}  }\end{align*}
If the cycle is a boundary, with $[(P,\Theta) \to X_{n-1}] = \partial\left[ (P',\Theta') \to Y_n \right]$ and $f$ extending to a map $P' \xrightarrow{\wt{f}} M$, then the holonomy can be computed by integrating the equivariant curvature $\omega = \curv(\x) \in \O^n_G(M)$
\[ \langle \x, \; X_{n-1} \leftarrow (P,\Theta) \xrightarrow{f} M \rangle = \int_{Y_n} \left( \Theta^* \otimes \wt{f}^* \right) \omega \mod \Z.\]
Furthermore, these equivariant holonomies are gauge-invariant; if two objects in the groupoid $\Map(X_{n-1}, \En G\times_G M)$ are isomorphic, they associate to $\x \in \HG^n(M)$ equal elements in $\R/\Z$.

\begin{rem}The entire above discussion concerning equivariant characters immediately generalizes from $\Z \subset \R$ to $\L \subset V$, where elements of $\HG^*(M;\L)$ give characters valued in $V/\L$.
\end{rem}

\subsection{Geometric models in low degrees}

\begin{prop}There are natural isomorphisms
\begin{align*}
 \HG^0(M) &\iso H^0(M/G;\Z), \\
 \HG^1(M) &\iso \HG[]^1(M)^G \iso C^\infty(M, \R/\Z)^G.
\end{align*}
\end{prop}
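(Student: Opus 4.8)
The plan is to compute each group directly from the short exact sequences in Proposition \ref{prop:ses}, using the low-degree structure of equivariant cohomology and equivariant forms. For $\HG^0(M)$, I would apply \eqref{ses2} in degree $n=0$. The subgroup $\frac{\O^{-1}_G(M)}{\O^{-1}_G(M)_\Z}$ vanishes since negative-degree forms are zero, so $\cc\colon \HG^0(M) \xrightarrow{\iso} H^0_G(M;\Z)$ is an isomorphism. It then remains to identify $H^0_G(M;\Z) = H^0(EG\times_G M;\Z)$ with $H^0(M/G;\Z)$; this is the standard fact that $H^0$ counts connected components, and the connected components of the homotopy quotient $EG\times_G M$ agree with those of the orbit space $M/G$ (the fibers $EG$ being connected).

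For $\HG^1(M)$, I would again use \eqref{ses2} in degree $n=1$. Now the kernel term is $\frac{\O^0_G(M)}{\O^0_G(M)_\Z}$. The key observation is that $\O^0_G(M) = \O^0(M)^G = C^\infty(M)^G$, the $G$-invariant smooth real-valued functions, and the closed $0$-forms with $\Z$-periods $\O^0_G(M)_\Z$ are precisely the $G$-invariant locally constant $\Z$-valued functions. The sequence \eqref{ses2} thus reads
\[ 0 \longrightarrow \frac{C^\infty(M)^G}{(\text{locally constant }\Z\text{-valued})} \longrightarrow \HG^1(M) \overset{\cc}\longrightarrow H^1_G(M;\Z) \longrightarrow 0. \]
The cleaner route, however, is to use the relationship $\HG[]^1(M)\iso C^\infty(M,\R/\Z)$ from the non-equivariant theory and argue that $\HG^1(M)$ picks out the $G$-invariant part. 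I would establish the map $\HG^1(M)\to \HG[]^1(EG\times_G M)$ via the Weil inclusion (as in the proof of Proposition \ref{prop:Uniqueness}), identify $\HG[]^1(EG\times_G M)\iso C^\infty(EG\times_G M,\R/\Z)$, and show the image consists of those smooth maps that factor appropriately, i.e.\ descend from $G$-invariant maps $M\to \R/\Z$. Since a smooth $\R/\Z$-valued function on $EG\times_G M$ that is pulled back along the projection $EG\times M\to M$ and is $G$-invariant corresponds exactly to an element of $C^\infty(M,\R/\Z)^G$, this yields $\HG^1(M)\iso C^\infty(M,\R/\Z)^G$.

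The main obstacle is the middle isomorphism $\HG^1(M)\iso \HG[]^1(M)^G$: one must verify carefully that the flat-plus-free-part structure of $\HG^1$ assembles precisely into the $G$-invariants of the ordinary degree-one differential cohomology, rather than some larger or smaller subgroup. The cleanest way to handle this is to check directly that an invariant function $\phi\in C^\infty(M,\R/\Z)^G$ determines a well-defined equivariant character in the sense of Section \ref{subsec:equivDC}: its value on a cycle $X_0\xleftarrow{} (P,\Theta)\xrightarrow{f} M$ is obtained by evaluating $\phi\o f$ and averaging over the fiber, and $G$-invariance guarantees gauge-invariance of the result. Conversely, a flat class of curvature zero in $\HG^1(M)$ is exactly such a character. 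I expect the identification of which smooth $\R/\Z$-functions on the Borel construction are hit to require the most care, but the degree being so low keeps all the cohomology and form computations elementary, so no serious analytic or homotopical difficulty should arise.
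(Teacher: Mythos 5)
Your degree-zero argument is correct but runs through \eqref{ses2} where the paper uses \eqref{ses1}: the paper reads off $\HG^0(M)\iso\O^0_G(M)_\Z=C^\infty(M,\Z)^G\iso H^0(M/G;\Z)$ directly, whereas you get $\HG^0(M)\iso H^0_G(M;\Z)$ and then identify $\pi_0(EG\times_G M)$ with $\pi_0(M/G)$; both are fine. In degree one the paper goes in the opposite direction from you: it \emph{constructs} the map $\HG^1(M)\to C^\infty(M,\R/\Z)^G$ directly, by evaluating the equivariant differential character of $\x$ on the orbit cycles $\pt\leftarrow G\xrightarrow{G\cdot m}M$ (gauge invariance of the character gives $G$-invariance of the resulting function $F_{\x}$), and then applies the Five Lemma to a map of short exact sequences with flat parts $H^0_G(M;\R/\Z)\iso\Ker(d)$ and curvature parts $\O^1_G(M)_\Z\iso\O^1(M)^G_\Z$. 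Your route --- embed $\HG^1(M)$ into $\HG[]^1(EG\times_G M)\iso C^\infty(EG\times_G M,\R/\Z)$ and identify the image --- can be made to work, but the step you flag as needing care requires a specific input you have not named: an equivariantly closed element of $\O^1_G(M)$ corresponds (most visibly in the Cartan model, where $d_C=d-\iota_{\varOmega_\fg}$ and there is no $S^1\fg^*\otimes\O^{-1}(M)$ term to absorb $\iota_{\varOmega_\fg}\omega$) to a $1$-form on $M$ that is closed, $G$-invariant \emph{and horizontal}, i.e.\ basic. It is this horizontality that forces the corresponding function on $EG\times_G M$ to be locally constant along the connected $EG$-directions and hence to descend from a $G$-invariant function on $M$; without it the descent claim in your image identification is not justified. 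Two smaller points: ``averaging over the fiber'' is neither needed nor well defined in $\R/\Z$ --- since $\phi$ is $G$-invariant and $f$ is equivariant, $\phi\circ f$ is already constant on each fiber of a $0$-cycle, so one simply evaluates at any point --- and the paper's direct character construction has the practical advantage that it never has to characterize an image inside the functions on the infinite-dimensional space $EG\times_G M$.
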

\begin{proof}
The short exact sequence \eqref{ses1} implies that $\HG^0(M) \iso  \O^0_G (M)_\Z.$  Unraveling this, we obtain the following chain of equalities and isomorphisms,
\[ \HG^0(M) \iso \O_G^0(M)_\Z = C^\infty(M,\Z)^G \iso C^0(M/G,\Z) \iso H^0(M/G;\Z).\]

In degree one, consider the homomorphism $\HG^1(M) \to C^\infty(M,\R/\Z)^G$ defined as follows using the equivariant differential character construction of Section \ref{subsec:equivDC}.  Given $\x \in \HG^1(M)$ and a point $m\in M$, define the element $F_{\x}(M) \in \R/\Z$ by evaluating on the $G$-orbit of $m$,
\[ F_{\x}(m) \=  \left\langle  \x, \; \pt \leftarrow G \xrightarrow{G\cdot m} M \right\rangle.\]
The gauge invariance of the equivariant character implies that the function $F_{\x}$ is $G$-invariant.  Furthermore, $\O^1_G(M)_\Z \iso \O^1(M)^G_\Z$, and integration shows that $F_{\x}$ is a smooth function with derivative $\curv(\x)$.  Hence, $F_{\x} \in C^\infty(M,\R/\Z)^G$.

Note that the kernel of $C^\infty(M,\R/\Z)^G \xrightarrow{d} \O^1(M)_\Z^G$ is given by the constant functions
\[ \Ker(d) = \O^0(M;\R/\Z)^G_{\cl} \iso H^0(M/G;\R/\Z) \iso H^0_G(M;\R/\Z).\]
Therefore, the homomorphism of short exact sequences 
 \[ \xymatrix{
0 \ar[r] & H^0_G(M;\R/\Z) \ar[r] \ar[d]^{\iso} &\HG^1(M) \ar[r] \ar[d] & \O_G^1(M)_\Z \ar[d]^{\iso} \ar[r] & 0 \\
0\ar[r] & \Ker(d) \ar[r]  & C^\infty(M, \R/\Z)^G \ar[r] &\O^1(M)^G_\Z  \ar[r] &0 \\
}  \]
gives our desired isomorphism via the Five Lemma.
\end{proof}

\begin{prop}In degree two, there is a natural isomorphism
\begin{align*}
 \HG^2(M) &\iso \pi_0\big(G\GBunc[\R/\Z](M)\big);
\end{align*}
i.e. $G$-equivariant $\R/\Z$-bundles with connection on $M$, modulo equivariant connection-preserving isomorphisms, are in bijection with elements of $\HG^2(M)$.
\end{prop}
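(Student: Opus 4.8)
The plan is to build a natural homomorphism $\Phi\colon \pi_0\big(G\GBunc[\R/\Z](M)\big) \to \HG^2(M)$ and prove it is an isomorphism by fitting it into the curvature short exact sequence \eqref{ses1}. To construct $\Phi$, observe that $\R/\Z$ is abelian, so $\wt{G}=G\times\R/\Z$ and $K=\R/\Z$ satisfy \eqref{eq:KGassumption} and Theorem \ref{thm:equivCCS} applies. Given $(Q,\Theta)\in G\GBunc[\R/\Z](M)$, the construction \eqref{eq:ChernWeil} produces a morphism $\En G\times_G M\xrightarrow{(Q_G,\Theta_G)}\Bn(\R/\Z)$, and since $\HG[\R/\Z]^2\iso H^2(B\R/\Z;\Z)\iso\Z$ is generated by the canonical refinement $\check{c}_1$ of the first Chern class, I would set $\Phi[Q,\Theta]\=(Q_G,\Theta_G)^*\check{c}_1=\check{c}_1(\Theta_G)\in\HG^2(M)$. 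This is well defined on isomorphism classes because an equivariant connection-preserving isomorphism $(Q_1,\Theta_1)\to(Q_2,\Theta_2)$ induces a natural isomorphism of the associated stack morphisms and hence equal pullbacks; and $\Phi$ is a homomorphism because the fiberwise tensor product of $\R/\Z$-bundles corresponds to the sum of classifying morphisms and $\check{c}_1$ is additive.

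I would then compare $\Phi$ with the sequence \eqref{ses1},
\[0\longrightarrow H^{1}_G(M;\R/\Z)\longrightarrow\HG^2(M)\xrightarrow{\ \curv\ }\O^2_G(M)_\Z\longrightarrow 0.\]
By Theorem \ref{thm:ChernWeil}(2), $\curv\big(\check{c}_1(\Theta_G)\big)=c_1(\Omega_G)$, a normalization of the equivariant curvature of $(Q,\Theta)$, so $\curv\circ\Phi$ is the map sending a bundle to its equivariant curvature. The goal is to produce a parallel exact sequence
\[0\longrightarrow \pi_0\big(G\GBunc[\R/\Z](M)\big)_{\flat}\longrightarrow \pi_0\big(G\GBunc[\R/\Z](M)\big)\xrightarrow{\ \Omega_G\ }\O^2_G(M)_\Z\longrightarrow 0,\]
where $(-)_{\flat}$ denotes the subgroup of classes of flat equivariant bundles, to identify $(-)_{\flat}\iso H^1_G(M;\R/\Z)$ compatibly with $\Phi$, and to finish with the five lemma.

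Establishing the bottom sequence requires three standard but nontrivial inputs. For surjectivity of the curvature, since $G$ is compact, averaging produces an invariant connection on any equivariant $\R/\Z$-bundle, and every class in $H^2_G(M;\Z)$ is realized by such a bundle because $G$-equivariant $\R/\Z$-bundles on $M$ are classified by $[EG\times_G M, B\R/\Z]=H^2_G(M;\Z)$; given $\eta\in\O^2_G(M)_\Z$, I would lift its de Rham class to $H^2_G(M;\Z)$, realize it by an equivariant bundle with invariant connection of equivariant curvature $\Omega_G$, and then correct by adding an invariant $1$-form $\beta\in\O^1(M)^G$, which changes $\Omega_G$ by $d_G\beta$, to hit $\eta$ exactly. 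For exactness in the middle, two bundles with the same equivariant curvature differ by a flat one via the tensor-product group structure. For the left term, flat equivariant bundles are classified by equivariant holonomy, giving $(-)_{\flat}\iso H^1_G(M;\R/\Z)$, and a flat bundle maps under $\Phi$ to a flat class, so $\Phi$ restricts to the flat subgroups compatibly with the inclusion $H^1_G(M;\R/\Z)\hookrightarrow\HG^2(M)$ of \eqref{ses1}. Granting these, the right vertical map is the identity on $\O^2_G(M)_\Z$ and the left vertical map is the identity on $H^1_G(M;\R/\Z)$, so the five lemma yields that $\Phi$ is an isomorphism.

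The main obstacle is the surjectivity of the equivariant curvature, i.e. the realization step: showing that every closed equivariant form with integral periods is the equivariant curvature $\Omega_G$ of some $G$-equivariant $\R/\Z$-bundle carrying an invariant connection. This blends a topological classification (equivariant line bundles detected by $H^2_G(M;\Z)$) with the Chern--Weil adjustment in the Weil and Cartan models of Section \ref{sec:EquivCW}, and one must verify carefully that adding an invariant $1$-form alters $\Omega_G$ precisely by $d_G\beta$ while preserving invariance of the connection. A secondary subtlety worth flagging is that one cannot shortcut the argument by identifying $\Bn(\R/\Z)$ with the degree-two classifying sheaf $\KZ[2]$: these sheaves agree on $\pi_0$ but differ on higher homotopy, since for a manifold $X$ the automorphisms in $\Bunc[\R/\Z](X)$ are the flat gauge transformations $H^0(X;\R/\Z)$, whereas $\pi_1$ of the mapping space into $\KZ[2]$ is $\HG[]^1(X)=C^\infty(X,\R/\Z)$. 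Consequently the comparison with $\HG^2(M)$ must proceed through the structural short exact sequence, not through a classifying-object equivalence.
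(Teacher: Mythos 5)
Your comparison map is the same as the paper's (the equivariant Chern--Weil morphism $\En G\times_G M\to\Bn(\R/\Z)$ followed by pullback of the generator of $\HG[\R/\Z]^2\iso\Z$), but you prove it is an isomorphism by fitting it against the curvature sequence \eqref{ses1}, whereas the paper uses the characteristic-class sequence \eqref{ses2}. With the paper's choice, the two outer vertical maps are (i) the Lashof--May--Segal classification $\pi_0\big(G\GBun[\R/\Z](M)\big)\iso H^2_G(M;\Z)$ and (ii) an entirely elementary computation: the kernel of forgetting the connection is the affine space $\O^1(M)^G$ of invariant connections on the trivial bundle modulo the derivatives of invariant gauge transformations, which is visibly $\O^1_G(M)/\O^1_G(M)_\Z$. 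Your choice trades these for (i$'$) surjectivity of the equivariant curvature onto $\O^2_G(M)_\Z$ --- which still needs the same topological classification input, plus averaging, plus the verification that shifting the connection by $\beta\in\O^1(M)^G$ shifts $\Omega_G$ by $d_G$ of the corresponding equivariant $1$-form (that verification is fine: for abelian $K$ one has $\Omega_G=d_G\Theta_G$ and $\Theta_G$ shifts by $\beta-\iota_{\theta_\fg}\beta$, the Mathai--Quillen image of $\beta$) --- and (ii$'$) the identification of flat equivariant bundles with $H^1_G(M;\R/\Z)$.

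Step (ii$'$) is the genuine gap. You assert that ``flat equivariant bundles are classified by equivariant holonomy, giving $(-)_{\flat}\iso H^1_G(M;\R/\Z)$,'' but this is not an off-the-shelf fact: equivariant flatness $\Omega_G=0$ means (in the Cartan model) that the connection is flat \emph{and} its moment $\xi\mapsto\iota_{\xi_Q}\Theta$ vanishes, and one must show that isomorphism classes of such objects are detected exactly by $H^1_G(M;\R/\Z)\iso\Hom\big(H_1(EG\times_G M),\R/\Z\big)$, compatibly with the inclusion $H^1_G(M;\R/\Z)\into\HG^2(M)$ of \eqref{ses1}. That statement is essentially the flat case of the very proposition you are proving, and it does not reduce to the classical ``flat bundles are representations of $\pi_1$'' argument without additional work to handle the group directions (e.g.\ relating holonomy along $G$-orbits to $\pi_0(G)$ and $\pi_1(EG\times_G M)$). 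Until this is supplied, the five-lemma argument does not close. Your closing remark --- that one cannot shortcut via an equivalence $\Bn(\R/\Z)\simeq\KZ[2]$ because the automorphism groups differ ($H^0(X;\R/\Z)$ versus $C^\infty(X,\R/\Z)$) --- is correct and worth keeping, but it does not substitute for (ii$'$). The most efficient repair is simply to switch to the paper's comparison against \eqref{ses2}, where the problematic term becomes the elementary computation (ii) above.
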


\begin{proof}
%
%

Let $\D c \in \HG[\R/\Z]^2 \iso H^2(B \R/\Z;\Z) \iso \Z$ be the standard generator.  The equivariant Chern--Weil construction of Theorem \ref{thm:equivCCS} gives us a map
\begin{align} \label{eq:EquivLineBundles} \begin{split} G\GBunc[\R/\Z](M) &\longrightarrow \HG^2(M) \\
(Q, \Theta) &\longmapsto \D c(Q_G, \Theta_G). \end{split}
\end{align}
Multiplication in $\R/\Z$ makes $G\GBunc[\R/\Z](M)$ into a Picard groupoid (this symmetric monoidal structure corresponds to the tensor product of line bundles), and \eqref{eq:EquivLineBundles} is symmetric monoidal.  Furthermore, the map only depends on the isomorphism class of $(Q,\Theta)$ in $G\GBunc[\R/\Z](M)$.

Hence, \eqref{eq:EquivLineBundles} descends to an abelian group homomorphism $\pi_0 \left( G\GBunc[\R/\Z](M) \right) \to \HG^2(M).$  Furthermore, the characteristic class $c(Q_G) \in H^2_G(M;\Z)$ does not depend on the connection $\Theta$.  This gives the following homomorphism of short exact sequences.
\[ \xymatrix{ 0 \ar[r]& \ar[r] \ar[d] \Ker \Pi \ar[r] \ar[d] & \pi_0 \left[ G\GBunc[\R/\Z](M) \right]\ar[r]^-{\Pi} \ar[d]& \pi_0 \left[ G\GBun[\R/\Z](M) \right] \ar[r] \ar[d]& 0 \\
0 \ar[r] & \ar[r] \frac{\O^1_G(M)}{\O^1_G(M)_\Z} \ar[r]& \HG^2(M) \ar[r]& H^2_G(M;\Z) \ar[r]& 0} \]
We now show that each of the above vertical maps is an isomorphism.

It is a classical result \cite{MR711050} that equivariant bundles with abelian structure group are classified topologically by Borel cohomology, i.e. $\pi_0 \big( G\GBun[A](M) \big) \iso [ EG\times_G M, BA]$ when $A$ is abelian.  This implies the right vertical map is an isomorphism.

The $G$-invariant connections on the trivial bundle $M \times \R/\Z$ are in bijection with $\O^1(M)^G$.  Such a connection form is gauge equivalent to the trivial connection if and only if it is the derivative of a gauge transformation $C^\infty(M,\R/\Z)^G \iso \HG^1(M)$; i.e. if it lives in $\O^1(M)^G_\Z$.  Hence, 
\[ \Ker \Pi \iso  \frac{\O^1(M)^G}{\O^1(M)^G_\Z} \xrightarrow{\iso} \frac{\O^1_G(M)}{\O^1_G(M)_\Z},\]
giving that the left vertical arrow is an isomorphism.  The proposition now follows by the Five Lemma.
\end{proof}

\section{Details of simplicial sheaf construction}\label{Section:DefnProof}

This section contains a more detailed account of Section \ref{subsec:sPreDefn}.  The results essentially follow by combining the work of \cite{MR3049871} and \cite{1311.3188}; the reader may refer to these works, along with \cite[Chapter 5]{MR2522659} for further details.  Let us briefly explain our notation.  

We will use simplicial sets in order to deal with groupoids, non-negatively graded chain complexes, and topological spaces simultaneously.  Let $\Gpdinfty$ be the $(\infty,1)$-category of ``$\infty$-groupoids,'' taken here to be the full simplicially enriched subcategory of simplicial sets spanned by Kan complexes.  The category $\Gpdinfty$ is naturally equivalent, as an $(\infty,1)$-category, to the topologically enriched category $\Top$ of topological spaces with the homotopy type of a CW complex.  Though it is not strictly necessary, we use this equivalence $\Gpdinfty \simeq \Top$ to make certain statements easier to read.
Any groupoid is naturally a Kan complex via the nerve construction $\N$, and the Dold--Kan correspondence $\DK$ makes a non-negatively graded chain complex into a simplicial abelian group.  These structures, together with the singular functor $\S$ and geometric realization $|\,|$, fit into the following picture.
\[ \xymatrix{  & \Top  \ar@<1.2ex>[d]^{\S}_{  \simeq }\\
\Gpd \ar@{^`->}[r]^{\N} &\Gpdinfty \ar@<1.2ex>[u]^{| \,|}& \Ch_{\geq 0} \ar@{_`->}[l]_{\DK} } \]
We implicitly consider any ordinary category as an $(\infty, 1)$-category via the nerve construction, though we do not explicitly write it.

Let $\PShv_{\Gpdinfty} = \Fun(\Man^{\op}, \Gpdinfty)$ be the $(\infty, 1)$-category of $\Gpdinfty$-valued presheaves on manifolds.  A functor $\cF \in \PShv_{\Gpdinfty}$ is a sheaf if it satisfies the following descent condition: for any covering $\cU \to X$, the canonical map $\cF(X) \xrightarrow{\simeq} \lim_{\Delta} \cF(\cU^\bullet)$ is an equivalence.  The full subcategory of simplicial sheaves is denoted $\Shv_{\Gpdinfty}$, and there is a sheafification functor $L$ forming the adjunction
\[ L: \PShv_{\Gpdinfty} \rightleftarrows \Shv_{\Gpdinfty} : \text{inclusion}. \] 

We continue to suppress the embedding of $\Set \into \Gpd$, but we use the nerve symbol to denote $\N\colon  \Shv_{\Gpd}\to \Shv_{\Gpdinfty}$.  As in previous sections, we will use $M$ to denote both a manifold $M$ and the stack it represents, but we will begin using the notation $\uline{M}$ within proofs for added clarity.  For $Y \in \Top$, we define $\Sing Y \in \Shv_{\Gpdinfty}$ by $\Sing Y(X) = \S(Y^X)$, the singular complex of the space of continuous maps from $X\to Y$.  Let $\cK(A,n)\in \Shv_{\Gpdinfty}$ denote any simplicial sheaf equivalent equivalent to $\Sing K(A,n)$.


We now recount the results we need from the theory of homotopy-invariant sheaves developed by Bunke--Nikolaus--V\"olkl in \cite{1311.3188}.  Their results hold for general $\cC$-valued sheaves on manifolds, where $\cC$ is an $(\infty, 1)$-category, but we have specialized them to the case of $\cC = \Gpdinfty \simeq \Top$.  A sheaf $\cF$ is homotopy-invariant if, for all manifolds $X$, the projection $X \times I \to X$ induces an equivalence
\[ \cF(X) \xrightarrow{\simeq} \cF(X \times I).\]
Let  $\Shv^{\h}_{\Gpdinfty}$ denote the full subcategory of homotopy-invariant sheaves.  The following results are found in Proposition 2.6, equation (63), and Lemma 7.13 of \cite{1311.3188}.
\begin{itemize}
\item The functor $\Sing$ gives an equivalence $\Top \xrightarrow{\simeq} \Shv_{\Gpdinfty}^{\h}$;  the inverse is given by evaluating at a point and taking the geometric realization.  There is also a homotopification functor $\cH$, and these form the following adjunctions.
\[ \xymatrix{ \Shv_{\Gpdinfty} \ar@<.75ex>[r]^{\cH} &\Shv_{\Gpdinfty}^{\h}  \ar@<.75ex>[r]^-{\text{ev}(\pt)}_-{\sim}  \ar@<.75ex>@{_`->}[l]& \Top \ar@<.75ex>[l]^-{\Sing}} \]
\item These adjunctions combine to give the following adjunction:
\begin{equation}\label{eq:HtpyAdjunction} \h: \Shv_{\Gpdinfty} \rightleftarrows \Top : \Sing .
\end{equation}
Because $\h$ is a left adjoint, it automatically commutes with homotopy colimits.  The functor $\h$ also preserves finite products.
\item Let $M$ be a manifold and $\uline{M}\in \Shv_{\Gpd}$ the stack it represents.  Then, 
\begin{equation}\label{eq:HtpyYoneda} \h( \N(\uline{M}) ) \simeq  M \in \Top.\end{equation}
\item If $\cA \in \Shv_{\Ab} \into \Shv_{\Gpd}$ is a sheaf of abelian groups that are $C^\infty$-modules, then  \begin{equation}\label{eq:FineSheaf} \h( \N(\cA)) \simeq \pt \in \Top. \end{equation}
\end{itemize}

We now make the following definition/abbreviation.
\begin{defn}\label{defn:StackCoh}Let $A$ be a fixed abelian group.  For a stack $\cM \in \Shv_{\Gpd}$,
\[ H^n(\cM; A) \=  \ho \Shv_{\Gpdinfty}(\N(\cM), \cK(A,n) ).\]
\end{defn}

\begin{example}For $M \in \Man$, consider $\N(\uline{M}) \in \Shv_{\Gpdinfty}$, which is defined by considering the set $C^\infty(X, M)$ as a discrete simplicial set.  The Yoneda lemma shows that Definition \ref{defn:StackCoh} agrees with our ordinary definition of cohomology,
\[ H^n( \uline{M}, A) = \ho \Shv_{\Gpdinfty}(\N(\uline{M}), \Sing K(A,n)) \iso \ho \Top(M,K(A,n)) \iso H^n(M;A).\]
\end{example}

The following proposition and proof are taken almost directly from Lemma 5.2 in \cite{1311.3188}.

\begin{prop}\label{prop:EnGHtpy}The natural map $EG\times_G M \to \En G \times_G M$ induces equivalences
\[ EG \times_G M \xrightarrow{\simeq} \h(\N(\En G \times_G M)) \xrightarrow{\simeq} \h(\N(\E G \times_G M)). \]
\end{prop}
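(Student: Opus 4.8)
The plan is to follow the method of \cite[Lemma 5.2]{1311.3188}: present both stacks as homotopy colimits of simplicial diagrams of sheaves and then apply the homotopification functor $\h$, which by \eqref{eq:HtpyAdjunction} commutes with homotopy colimits and preserves finite products.

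First I would write down the action-groupoid presentation. Because every principal $G$-bundle with connection is locally trivial, the morphism sending a pair $(A,m)\in \O^1(X;\fg)\times \underline{M}(X)$ to the trivial bundle $X\times G$ with connection $A$ and equivariant map determined by $m$ is a local epimorphism onto $\En G\times_G M$, and its \v{C}ech nerve is the bar construction of the gauge action of $\underline{G}=C^\infty(-,G)$, namely the simplicial sheaf
\[ [n]\longmapsto \O^1(-;\fg)\times \underline{M}\times \underline{G}^{\times n}. \]
Thus $\N(\En G\times_G M)$ is the homotopy colimit of this diagram in $\Shv_{\Gpdinfty}$. Forgetting the connection discards the first factor and exhibits $\N(\E G\times_G M)$ as the homotopy colimit of $[n]\mapsto \underline{M}\times\underline{G}^{\times n}$, the forgetful morphism being the evident map of diagrams.

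Next I would apply $\h$ levelwise. By \eqref{eq:FineSheaf} the fine sheaf of connections satisfies $\h(\N(\O^1(-;\fg)))\simeq \pt$, while \eqref{eq:HtpyYoneda} gives $\h(\N(\underline{M}))\simeq M$ and $\h(\N(\underline{G}))\simeq G$. Since $\h$ preserves finite products and commutes with homotopy colimits, both presentations collapse to
\[ \h(\N(\En G\times_G M))\;\simeq\;\hocolim_{[n]\in\Delta^{\op}} M\times G^{\times n}\;\simeq\;\h(\N(\E G\times_G M)), \]
and the displayed homotopy colimit is the geometric realization of the nerve of the topological action groupoid of $G$ on $M$, i.e. the Borel construction $EG\times_G M$. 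This already shows the forgetful map induces an equivalence.

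It remains to identify the first arrow with the natural map, which is where I expect the only genuine bookkeeping. The natural map $EG\times_G M\to \En G\times_G M$ arises from the universal bundle with connection of Section \ref{subsec:deRhamModel}, built as a colimit of smooth finite-dimensional approximations; after $\N$ and $\h$ it is compatible with the bar-construction presentation, hence agrees with the standard equivalence $EG\times_G M\simeq \hocolim_{[n]} M\times G^{\times n}$. The main obstacle is thus foundational rather than computational: one must justify that $\h$ may be evaluated on the groupoid presentation—that the \v{C}ech nerve of the covering realizes to the stack and that $\h$ commutes with this homotopy colimit—and that the comparison map of the proposition is the one induced by the simplicial presentations. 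Both follow from descent in $\Shv_{\Gpdinfty}$ together with the $\h$-triviality of the sheaf of connections already exploited in \cite[Lemma 5.2]{1311.3188}.
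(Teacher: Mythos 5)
Your proposal is correct and follows essentially the same route as the paper: both present the stacks via the bar construction of the gauge/translation action groupoid, use local trivializability to identify the sheafified nerve of that groupoid with the stack, and then apply $\h$ (commuting with homotopy colimits and finite products, killing the fine sheaf of connections by \eqref{eq:FineSheaf}) to collapse everything to $\hocolim_{\Delta^{\op}} M\times G^{\times n}\simeq EG\times_G M$. The only place the paper is more explicit is the step you flag as bookkeeping: it identifies the first arrow with the natural map by a second commutative diagram comparing $(EG\times M)\sslash G$ with $M\sslash G$, using freeness of the $G$-action and the contractibility of $EG$ levelwise in the simplicial presentation.
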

\begin{proof}
We first show $\h(\N(\En G \times_G M)) \xrightarrow{\simeq} \h(\N( \E G \times_G M))$.  Consider the following commutative diagram of prestacks
\[ \xymatrix{  \En G\times_G M \ar[r] & \E G \times_G M \\
 \left(\O^1 \otimes \fg \times M\right)\sslash G  \ar[u] \ar[r] & M \sslash G,  \ar[u] 
}\]
where we use the notation $\bullet \sslash G$ to denote the action groupoid $[\bullet \leftleftarrows G \times \bullet]$ associated to a $G$-action.  The group $G$ acts on $\O^1 \otimes \fg$ via the usual change of connection formula, $\omega \cdot g = \Ad_{g^{-1}} \omega + g^*\theta_{\fg} $.  The right vertical maps is given by associating to $X \to (M\sslash G)$ the trivial bundle $X \leftarrow X\times G$, together with the map $X\times G \to M$ determined by the map $X\to M$.  Likewise, the left vertical map is given by the same construction, but with the connection on $X\times G$ determined by the map $X\to \O^1\otimes \fg$.  

Since any bundle is locally trivializable, these vertical maps induce equivalences of groupoids when evaluated on stalks.  Hence, upon taking the nerve, the vertical maps induce equivalences in $\Shv_{\Gpdinfty}$ (see \cite[Lemma 5.2]{1311.3188} and \cite[(7.23)]{MR3049871}).
\[ \xymatrix{  \N(\En G\times_G M )\ar[r] & \N(\E G \times_G M) \\
L( \N\left( \left(\O^1 \otimes \fg \times M\right)\sslash G \right))  \ar[u]^{\simeq} \ar[r] & L(\N \left(M \sslash G\right))  \ar[u]^{\simeq} 
}\]

We now use the general fact that for a simplicial set $B$, the induced simplicial object in simplicial sets, given by the composition $\Delta^{\op} \xrightarrow{B} \Set \xrightarrow{\N} \sSet$, gives a natural equivalence
\[ \hocolim_{\Delta^{\op}} \N(B) \simeq B.\] 
Rewriting the bottom entries as colimits, and applying the properties of $\h$ described above, gives the following sequence of equivalences. 
\begin{align*}
\h \big( L( \N &((\O^1 \otimes \fg \times M ) \sslash G  ))\big) \simeq 
\h \left(L( \O^1 \otimes \fg \times \uline{M} \leftleftarrows \uline{G}\times \O^1 \otimes \fg \times \uline{M} \tripleleftarrow  \cdots )\right) \\
&\simeq \h\Big(L \Big( \hocolim_{\Delta^{\op}}\big( \N(\O^1 \otimes \fg\times \uline{M}) \leftleftarrows \N(\uline{G}\times \O^1 \otimes \fg\times \uline{M}) \tripleleftarrow  \cdots \big)\Big)\Big) \\
&\simeq \hocolim_{\Delta^{\op}} \left( \h(\N(\O^1\otimes \fg) \times \h(\N(\uline{M})) \leftleftarrows \h(\N(\uline{G})) \times \h(\N(\O^1\otimes \fg)) \times \h(\N(\uline{M})) \tripleleftarrow   \cdots )\right) \\
&\xrightarrow{\simeq} \hocolim_{\Delta^{\op}} \left( M \leftleftarrows G \times M \tripleleftarrow G \times G \times M \cdots \right) \simeq \h \big(L \big( \N(M\sslash G)\big)\big).
\end{align*}
The equivalence $\h(\N(\En G \times_G M)) \xrightarrow{\simeq} \h(\N(\E G\times_G M))$ follows immediately.  In the above argument, the third equivalence was given by fact that $\h$ and $L$ commute with colimits,  $\N(\uline{G}^{\times k} \times \O^1 \otimes \fg \times \uline{M})$ is a sheaf, and $\h$ preserves finite products.  The fourth equivalence was given by the fact that $\h(\N(\uline{X})) \simeq X$, and $\h(\N(\cA)) \simeq \pt$ for $\cA$ a sheaf of $C^\infty$-modules.  Note that the middle entries were given as the homotopy colimit of a simplicial space (simplicial object in $\Top$).  For proper simplicial spaces, the homotopy colimit is equivalent to the geometric realization; this is a common construction of $BG$ when $M=\pt$.


Now consider also the following commutative diagram of prestacks
\[ \xymatrix{  EG\times_G M \ar[r] & \E G \times_G M \\
(EG \times M) \sslash G  \ar[u] & M \sslash G, \ar[u] \ar[l]
}\]
where the bottom left map is given by the homotopy equivalence $M \xrightarrow{\simeq} EG\times M$.  The left vertical map is given by quotienting, and since $G$ acts freely, it leads to an equivalence of groupoids when evaluated on any manifold $X$.  Hence, both vertical maps give equivalences in $\Shv_{\Gpdinfty}$.  
\[ \xymatrix{ \N( EG\times_G M) \ar[r] & \N(\E G \times_G M) \\
L(\N[EG \times M \leftleftarrows G \times (EG \times M) ]) \ar[u]^{\simeq} &L(\N [M \leftleftarrows G \times M]) \ar[u]^{\simeq} \ar[l]
}\]
By the same calculation  as above
\begin{align*} \h(L(\N(M\sslash G))) &\simeq \hocolim_{\Delta^{\op}} \left( M \leftleftarrows G \times M \tripleleftarrow G \times G \times M \cdots \right) , \\
 \h(L(\N((EG \times M)\sslash G)))  &\simeq \hocolim_{\Delta^{\op}} \left( EG \times M \leftleftarrows G \times EG \times M \tripleleftarrow G \times G \times M \cdots \right), 
\end{align*}
and the homotopy equivalence $\pt \xrightarrow{\simeq} EG$ induces a homotopy equivalence of the relevant simplicial spaces.  This implies $\h(L(\N(M\sslash G))) \xrightarrow{\simeq}\h(L(\N((EG \times M)\sslash G)))$, which in turn gives the equivalence
\[ EG \times_G M \simeq \h(L(\N( EG\times_G M))) \xrightarrow{\simeq} \h(\N(\E G \times_G M)).\]
\end{proof}

\begin{prop}\label{prop:StackCohIso}The map $EG\times_G M \to \En G \times_G M$, defined by the connection on $EG$, induces an isomorphism in cohomology
\[ H^*(\En G \times_G M; A) \xrightarrow{\iso} H^*(EG\times_G M; A).\]
\end{prop}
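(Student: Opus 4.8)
The plan is to reduce the statement to Proposition \ref{prop:EnGHtpy} by exploiting the adjunction \eqref{eq:HtpyAdjunction} between the homotopification functor $\h$ and $\Sing$, together with the fact that the representing sheaf $\cK(A,n) \simeq \Sing K(A,n)$ is homotopy-invariant (it lies in the essential image of $\Sing$). Once both cohomology groups are rewritten as homotopy classes of maps \emph{in $\Top$} out of $\h$ applied to the relevant nerves, the statement becomes a formal consequence of Proposition \ref{prop:EnGHtpy}.

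First I would unwind Definition \ref{defn:StackCoh}: by definition
\[ H^n(\En G \times_G M; A) = \ho \Shv_{\Gpdinfty}(\N(\En G \times_G M), \cK(A,n)). \]
Because $\cK(A,n) \simeq \Sing K(A,n)$ is homotopy-invariant, the adjunction \eqref{eq:HtpyAdjunction} identifies maps out of an arbitrary sheaf $\cF$ into $\Sing K(A,n)$ with maps out of $\h(\cF)$ into $K(A,n)$ in $\Top$. Taking $\cF = \N(\En G \times_G M)$ yields a natural isomorphism
\[ H^n(\En G \times_G M; A) \iso \ho\Top\big( \h(\N(\En G \times_G M)),\, K(A,n)\big). \]
By Proposition \ref{prop:EnGHtpy} the natural map $\iota\colon EG\times_G M \to \En G\times_G M$ induces an equivalence $EG\times_G M \xrightarrow{\simeq} \h(\N(\En G \times_G M))$ in $\Top$, so the right-hand side is $\ho\Top(EG\times_G M, K(A,n)) = H^n(EG\times_G M; A)$. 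This produces the asserted isomorphism.

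Finally I would check that this isomorphism is the map induced by $\iota$. The homomorphism $\iota^*\colon H^n(\En G\times_G M;A) \to H^n(EG\times_G M;A)$ is, by functoriality, precomposition of a classifying map with $\N(\iota)$; under the adjunction this becomes precomposition with $\h(\iota)$. Since $\h(\iota)$ is exactly the homotopy equivalence supplied by Proposition \ref{prop:EnGHtpy}, and precomposition with a homotopy equivalence induces a bijection on $\ho\Top(-,K(A,n))$, the map $\iota^*$ is an isomorphism. The substantive content is entirely in Proposition \ref{prop:EnGHtpy}; the only points needing care here are that $\cK(A,n)$ is genuinely homotopy-invariant, so that \eqref{eq:HtpyAdjunction} (and not merely the defining property of $\Shv_{\Gpdinfty}$) applies, and that the equivalence of Proposition \ref{prop:EnGHtpy} is induced by the same natural map $\iota$. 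Both are immediate from the constructions, so beyond invoking Proposition \ref{prop:EnGHtpy} the argument is formal.
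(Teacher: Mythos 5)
Your proposal is correct and follows exactly the same route as the paper: unwind Definition \ref{defn:StackCoh}, apply the adjunction \eqref{eq:HtpyAdjunction} using that $\cK(A,n)\simeq \Sing K(A,n)$ is homotopy-invariant, and invoke Proposition \ref{prop:EnGHtpy} to identify $\h(\N(\En G\times_G M))$ with $EG\times_G M$. Your added check that the resulting isomorphism agrees with the map induced by $EG\times_G M\to \En G\times_G M$ is a point the paper leaves implicit, but otherwise the arguments coincide.
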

\begin{proof}
This follows almost immediately from Definition \ref{defn:StackCoh}, the adjunction \ref{eq:HtpyAdjunction}, and the calculation of $\h(\N(\En G \times_G M))$ in Proposition \ref{prop:EnGHtpy}.  Together, these give the following sequence of isomorphisms:
\begin{align*} H^n( \En G \times_G M; A) &= \ho \Shv_{\Gpdinfty} ( \N( \En G \times_G M), \cK(A,n) ) \\
&\iso \ho \Top( \h( \N( \En G \times_G M)), K(A,n)) \\
& \xrightarrow{\iso} \ho \Top( EG \times_G M, K(A,n)) \\
&\iso H^n(EG\times_G M;A).
\end{align*}
\end{proof}


Finally, we repeat an explicit construction of a simplicial sheaf representing differential cohomology.  This construction, and minor variations, have already appeared in several places, including \cite{HS05} implicitly and \cite{1311.3188,1208.3961, MR3019405, MR3335251, 1310.7930} explicitly.  In order to obtain simplicial sets from cochain complexes, we use the standard trick of reversing the grading and shifting, and then using the Dold--Kan construction.  The inclusions of
 \[ \Z \into \R \into \O^0 \]
induce the following maps of presheaves of chain complexes, where degree 0 is written on the left.
\[ ( \O^n_\cl \leftarrow 0 \leftarrow \cdots )  \into ( \O^n_{\cl} \xleftarrow{d} \O^{n-1} \xleftarrow{d} \cdots \xleftarrow{d} \O^0 )  \hookleftarrow (0 \leftarrow \cdots \leftarrow \Z \leftarrow \cdots )\]
Applying Dold--Kan (and sheafifying where necessary), we then take the homotopy pullback in $\Shv_{\Gpdinfty}$, obtaining
\begin{equation}\label{eq:DiffK(Z,n)} \vcenter{ \xymatrix{ \K(\Z,n) \ar[d] \ar[r] & L(\DK(0 \leftarrow \cdots \leftarrow \Z \leftarrow \cdots )) \ar[d] \\
\DK ( \O^n_\cl \leftarrow 0 \leftarrow \cdots )   \ar[r] & \DK( \O^n_{\cl} \xleftarrow{d} \O^{n-1} \xleftarrow{d} \cdots \xleftarrow{d} \O^0 ) } }\end{equation}
The top right corner is equivalent to $\cK(\Z,n)$, and the bottom right corner is equivalent to $\cK(\R,n)$, so the construction of $\K(\Z,n)$ fits the description from \eqref{eq:SheafSquare}.


%

We now have two potential definitions of $\HG^*(M)$.  The first was given in Section \ref{sec:HS} using the cochain complex $\CG(n)(M;\Z)$, and the second is given in this section as 
\[ \ho \Shv_{\Gpdinfty}\left(\N(\En G \times_G M), \K(\Z,n)\right).\]
We now prove these two possible definitions of $\HG^*(M)$ agree.

\begin{proof}[Proof of Theorem \ref{thm:TwoDefnsAgree}]
We use Dold--Kan to consider $\CG(n)^*(M)$ as a homotopy pullback of simplicial abelian groups via the following.
\begin{equation}\label{eq:DKofHS} \vcenter{\xymatrix{\DK( \ZG(n)^n(M) \xleftarrow{d} \CG(n)^{n-1}(M) \xleftarrow{d}  ) \ar[r] \ar[d] &\DK(Z^n(EG\times_G M;Z) \xleftarrow{\delta} \cdots) \ar[d] \\
\DK(\O^n_G(M)_\cl \xleftarrow{}0) \ar[r]  & \DK(Z^n(EG\times_G M;\R) \xleftarrow{\delta} \cdots) 
} }\end{equation}

The Freed--Hopkins isomorphism \cite[Propositions 6.22 and Theorem 7.28]{MR3049871} gives us 
\[ \ho \Shv_{\Gpdinfty}(\N(\En G \times_G M), \N(\O^n_{\cl})) \iso \Shv_{\Gpd}( \En G \times_G M, \O^n_{\cl}) \iso \O^n_G(M)_\cl.\]
Likewise Proposition \ref{prop:EnGHtpy} shows us that
\begin{align*}
\Shv_{\Gpdinfty}&(\N(\En G \times_G M), L(\DK(0 \leftarrow \cdots \leftarrow \Z ) ) \simeq \Shv_{\Gpdinfty} ( \N(\En G \times_G M), \cK(\Z, n)) \simeq \\
\simeq &\Shv_{\Gpdinfty}^{\h}\left( \h(\N(\En G \times_G M)), \cK(\Z,n)   \right)  \simeq \S\left( K(\Z,n) ^{EG\times_G M } \right),  \\
\Shv_{\Gpdinfty}&(\N(\En G \times_G M), \DK( \O^n_{\cl} \xleftarrow{d} \O^{n-1} \xleftarrow{d} \cdots ) ) \simeq \Shv_{\Gpdinfty} ( \N(\En G \times_G M), \cK(\R, n)) \\ 
\simeq &\Shv_{\Gpdinfty}^{\h}\left( \h(\N(\En G \times_G M)), \cK(\R,n)   \right)  \simeq \S\left( K(\R,n) ^{EG\times_G M } \right). 
\end{align*}
There is a natural homotopy equivalence 
\[ \S(K(A,n)^X ) \xrightarrow{\simeq} \DK( Z^n(X;A) \xleftarrow{\delta} C^{n-1}(X;A) \xleftarrow{\delta} \cdots ) \]
induced by pulling back a fundamental cocycle in $Z^n(K(A,n);A)$ and integrating over the simplex \cite[Proposition A.12]{HS05}.  Therefore, we have equivalences from three corners of \eqref{eq:DiffK(Z,n)}  evaluated on $\N(\En G \times_G M)$, and the corresponding three corners of \eqref{eq:DKofHS}.  Therefore, the induced map between homotopy pullbacks 
\[ \Shv_{\Gpdinfty}(\N(\En G \times_G M), \K(\Z,n)) \xrightarrow{\sim} \DK( \ZG(n)^n \xleftarrow{d} \CG(n)^{n-1} \xleftarrow{d} \cdots ) \]
is also an equivalence in $\Gpdinfty$, which gives us the desired isomorphism in the homotopy category.
\end{proof}

\appendix
\renewcommand\K{\operatorname{K}}

\section{Equivariant de Rham theory}\label{sec:Background}
We briefly recall some standard facts about equivariant cohomology, principal bundles with connection, and equivariant differential forms.  Our goal is to describe the Weil algebra $\W(\fg)$ from the perspective of differential forms on principal bundles.  We hope this makes clear why having connections is so essential to using $\O_G(M)$.  Among the many wonderful references, our treatment is most heavily influenced by: the introduction to principal bundles in Section 1 of \cite{Fre95}, the summary of equivariant cohomology \cite{Meinrenken-EquivCoh}, and the standard textbook for $G^\star$-algebras and equivariant de Rham cohomology \cite{GuilleminSternberg99}.

\subsection{Equivariant cohomology}\label{subsec:BorelCoh}
Let $G$ be a reasonably nice topological group.  Then there exists a contractible space $EG$ on which $G$ acts freely from the right, giving us a universal $G$-bundle $EG \to BG$.  If $M$ has a continuous left $G$-action, the \textit{homotopy quotient} (or \textit{Borel construction}) of $M$ is defined
\[ EG \times_G M \= \left( EG \times M\right)\big/ \big((eg,x) \sim (e,gx)\big). \]
Borel's construction of the \textit{equivariant cohomology} of $M$, with coefficients in an abelian group $A$, is simply the ordinary cohomology of the homotopy quotient
\[ H_G^*(M;A) \= H^*(EG \times_G M; A).\]
A $G$-equivariant map $M \overset{f}\to N$ induces a map on the Borel constructions, giving the desired homomorphisms
\[ H^*_G(N;A) \overset{f_G^*}\longrightarrow H^*_G(M;A).\]

There are important, though subtle, relationships between equivariant and ordinary cohomology.  In particular, $H^*_G(\pt;A) = H^*(BG;A)$, and the natural map $M \to \pt$ makes $H^*_G(M;R)$ into a $H^*(BG;R)$-module when $R$ is a ring.  When $G$ acts freely on $M$, there is a natural isomorphism $H^*_G(M;A) \iso H^*(M/G;A)$.  This follows from the fact that the projection $EG\times_G M \to M/G$ is a locally trivial fiber bundle with fiber $EG$, and hence it is a homotopy equivalence.

\subsection{Principal $G$-bundles with connection}\label{subsec:GBun}Let $G$ be a Lie group, and let $\fg$ be the Lie algebra of left-invariant vector fields on $G$.

A \textit{principal $G$-bundle} on a manifold $X$ is a manifold $P$ equipped with a free right $G$-action and map to $X$ such that
\[ P \overset{\pi}\longrightarrow X\iso P/G.\]
Note that on differential forms, $\pi^*\colon  \O(X) \to \O(P)$ is an injective map, and the subspace $\pi^*\O(X)$ is naturally identified as the \textit{basic} (i.e. invariant horizontal) forms
\begin{equation}\label{eq:HorizontalFormsP} \O(X) \iso  \pi^*\O(X) = \O(X)_{\hor}^G \subset \O(P).\end{equation}
Here, the horizontal forms are defined by: $\omega \in \O(P)_{\hor}$ if  $\iota_X \omega = 0$ for all $X\in \fg$. 

A \textit{connection} on $P$ is an $\Ad$-equivariant $\fg$-valued 1-form $\Theta \in \O^1(P;\fg)$ that restricts fiberwise to the Maurer--Cartan form $\theta_\fg$.  More explicitly,
\begin{itemize}
\item $\iota_X \Theta = X \in \O^0(P;\fg)$ for any vector field $X \in \fg$,
\item $g^*(\Theta) = \Ad_{g^{-1}} \Theta$, where $g\colon  P\to P$ is right multiplication by $g$.
\end{itemize}
The $\Ad$-equivariance can be rewritten as $\Theta \in \big( \O^1(P) \otimes \fg \big)^G$, where the second factor $\fg$ is the adjoint representation of $G$.  The set of all connections on $P$ is an affine space, with $\O^1(X;\fg)$ acting freely and transitively.  The \textit{curvature} is defined 
\[ \Omega \= d\Theta + \tfrac{1}{2} [\Theta \wedge \Theta] \in \O^2(P;\fg).  \]

There is also an equivalent, but more geometric, interpretation of a connection.  The $G$-action defines the distribution $\TV P$ of vertical tangent vectors in $P$ by
\[ \fg \overset{\iso}\longrightarrow \Ker \pi_* =: \TV P  \subset TP.\]
A connection on $P$ is the choice of an equivariant horizontal distribution $\TH P$; i.e. a connection is equivalent to an equivariant splitting of the tangent bundle
\[ TP = \TH P \oplus \TV P \iso \pi^*TM \oplus \fg. \]
The connection 1-form $\Theta$ gives a projection from $TP$ onto $\TV P\iso \fg$, thus defining the horizontal distribution by 
\[ \TH P \= \Ker \Theta \subset TP.\]

The splitting of $TP$ into horizontal and vertical subspaces induces a bi-grading on the differential forms
\[ \O^{i,j}(P) \= C^\infty(P, \L^i \TH P^* \otimes \L^j \TV P^*) \iso C^\infty(P, \pi^* \L^i TX^* \otimes \L^j \fg^* ).\]
In this bi-grading, the exterior derivative decomposes as 
\[ d = d^{0,1} + d^{1,0} + d^{2,-1} \iso  (-1)^i d_{\fg} + d_{\nabla} + (-1)^i \iota_\Omega.\]
Here, $d_{\fg}\colon  \L^j \fg^* \otimes C^\infty(P) \to \L^{j+1}\fg^* \otimes C^\infty(P)$ is the Lie algebra (or Chevalley--Eilenberg) differential for the $G$-module $C^\infty(P)$ \cite{CE48}.  If we restrict to $\O(P)^G$, then $(\L \fg^*, d_\fg)$ is naturally the de Rham complex of left-invariant forms on $G$.  The connection $\Theta$ induces the covariant derivative $d_\nabla$, and $\iota_\Theta$ is the derivation induced by contracting along the vector-valued 2-form $\Omega$.  See Section 3 of \cite{MR3004281} for more details.

\subsection{$G^\star$-algebras}   Let $(\cA, d)$ be a commutative differential graded algebra (DGA), where $d$ is a derivation of degree $+1$ and commutative means $ab = (-1)^{|a||b|}ba$ for homogeneous elements.  A $\operatorname{DGA}$-automorphism of degree 0 is an algebra automorphism $\phi\colon  \cA \to \cA$ that preserves grading and commutes with $d$.

\begin{defn}[Section 2.3 of \cite{GuilleminSternberg99}]\label{defn:G*alg}A $G^\star$-algebra is a commutative DGA $(\cA, d)$ equipped with representations
\[ G \overset{\rho}\longrightarrow \Aut_{\operatorname{DGA}}(\cA) \quad \text{ and }\quad \fg \overset{\iota}\longrightarrow \Der(\cA) \]
of degree 0 and -1, respectively, such that $\iota$ is $G$-equivariant with respect to $\rho$ and satisfies the Cartan equation; i.e. for all $X\in \fg$
\begin{align}
\label{eq:gstar1} \rho_g \iota_X \rho_{g^{-1}} &= \iota_{\Ad_g X}, \\
\label{eq:gstar2} \iota_X d + d\iota_X &= L_X.
\end{align}
Here, $L\colon  \fg \to \Der(\cA)$ is the Lie algebra representation induced by $\rho$.

A map $\phi\colon  \cA_1 \to \cA_2$ is a morphism of $G^\star$-algebras if $\phi$ commutes with multiplication, $\rho$, $d$, and $\iota$.
\end{defn}

\begin{example}\label{ex:G*Forms}Suppose that a manifold $M$ has a left $G$-action.  Then the de Rham complex $\big(\O(M), d \big)$ is naturally a $G^\star$-algebra.  The $G$-action on $\O(M)$ is defined by
\[ \rho_g \omega \= (g^{-1})^*\omega,\]
and $\iota$ is defined by composing the usual interior derivative with the action of $\fg$ on vector fields
\[ \fg \to \mathfrak{X}(M) \overset{\iota}\to \Der( \O(M) ).\]
If $f\colon  M_1 \to M_2$ is $G$-equivariant, then $f^*\colon  \O(M_2) \to \O(M_1)$ is a morphism of $G^\star$-algebras.
\end{example}

%

\begin{rem}\label{rem:LeftVsRight}We use the convention that manifolds $M$ have a left $G$-action, while principal bundles $P$ have a right $G$-action.  However, we sometimes implicitly use the natural switch between left and right actions.  Given a left $G$-action on a set $Y$, define the right $G$-action via the formula $y \cdot g \= g^{-1} \cdot y$; similarly, a right $G$-action induces a left $G$-action.
\end{rem}

%

\begin{defn}Let $\cA$ a $G^\star$-algebra.  An element $a$ is \textit{invariant} if $\rho_g a = a$ for all $g\in G$, and it is \textit{horizontal} if $\iota_X a = 0$ for all $X \in \fg$.  The \textit{basic} sub-algebra is the intersection of the invariant and horizontal elements:
\[ \cA_{\basic} \= \cA^G \cap \cA_{\hor} = \cA_{\hor}^G.\]
\end{defn}

The definition of a $G^\star$-algebra implies that $(\cA_{\basic}, d)$ is a sub-$\DGA$ of $(\cA,d)$.  Furthermore, if $\phi\colon  \cA \to \cB$ is a morphism of $G^\star$-algebras, then $\phi$ restricts to a $\DGA$-morphism on the basic subcomplexes $\phi\colon  \cA_{\bas} \to \cB_{\bas}$.

%
%
%

\begin{defn}A connection on a $G^\star$-algebra $\cA$ is an element $\Theta \in \big(\cA^1 \otimes \fg\big)^G$ such that $\iota_X \Theta = X$ for all $X\in \fg$.  The curvature of $\Theta$ is defined \[ \Omega \= d\Theta + \tfrac{1}{2}[\Theta, \Theta] \in \big( \cA^2 \otimes \fg \big)^G.\]
\end{defn}

\begin{example}Suppose $P\overset{\pi}\to X$ is a principal $G$-bundle.  Example \ref{ex:G*Forms} and Remark \ref{rem:LeftVsRight} show that $\O(P)$ is naturally a $G^\star$-algebra, though we write $\rho_g \omega = g^*\omega$ due to the fact that $G$ acts on the right.  As noted in \eqref{eq:HorizontalFormsP}, the basic subcomplex $\O(P)_{\basic}$ is naturally isomorphic to $\O(X)$.  Furthermore, connections for the principal bundle $P$ are equivalent to connections for the $G^\star$-algebra $\O(P)$.
\end{example}

\begin{example}\label{WeilAlg}
The \textit{Weil algebra} $\W(\fg)$ is a $G^\star$-algebra with connection, and it is constructed so that it canonically maps to any other $G^\star$-algebra with connection.  Explicitly, 
\begin{gather*}\W(\fg) \= S \fg^* \otimes \Lambda \fg^*, \\
\deg S^1\fg^* = 2, \quad \deg \Lambda^1\fg^* = 1, \\
d_{\W} \= d_{\fg} + d_{\K}.
\end{gather*}
Here $S$ and $\Lambda$ are the total symmetric and exterior powers, so the coadjoint representation $\fg^*$ naturally makes $\W(\fg)$ into a $G$-representation.  The differential $d_{\fg}$ is the Chevalley--Eilenberg differential for Lie algebra cohomology with values in the $\fg$-module $S\fg^*$
\[ S^i \fg^*  \otimes \Lambda^j \fg^* \overset{d_{\fg}}\longrightarrow S^i \fg^*  \otimes \Lambda^{j+1} \fg^*;\]
it has degree $(0,1)$ under the bi-grading $\W^{2i,j}(\fg) = S^i \fg^* \otimes \L^j \fg^*$.  The Koszul differential $d_{\K}$ has degree $(2,-1)$ and is defined by extending the natural isomorphism
\[ \Lambda^1 \fg^* \overset{d_{\K}}{\underset{\iso}\longrightarrow} S^1(\fg^*)\]
to a derivation 
\[ S^i(\fg^*) \otimes \Lambda^j(\fg^*) \overset{d_{\K}}\longrightarrow S^{i+1}(\fg^*) \otimes \Lambda^{j-1}(\fg^*).\]
The derivation $\iota$ has degree $(0,-1)$ and is induced by the usual contraction 
\[ \fg \overset{\iota}\longrightarrow \End(\L \fg^*).\]

The Weil algebra $\W(\fg)$ is acyclic; i.e. $H^0(\W(\fg),d_{\W}) = \R$ and $H^i(\W(\fg), d_{\W}) = 0$ for $i>0$.  It has a natural connection
\[ \theta_\fg \in \L^1 \fg^* \otimes \fg = \W^{0,1}(\fg)\otimes \fg \]
given by the identity map $\fg \to \fg$; i.e. $\theta_\fg(X) = X$ for $X\in \fg$.  When there is no risk of confusion, we will drop the subscript and write $\theta$ for $\theta_\fg$.  Using the fact that $d_{\fg} \theta = - \tfrac{1}{2}[\theta \wedge \theta]$, we see that the curvature $\varOmega = \varOmega_{\fg}$ equals $d_{\K} \theta_\fg$.  In light of this, we rewrite the Koszul derivative $d_{\K}$ as $\iota_{\varOmega_\fg}$.
%
\end{example}

\begin{prop}[Weil homomorphism]\label{prop:WeilToConn}Let $\cA$ be a $G^\star$-algebra with connection $\Theta$.  Then, there is natural morphism of $G^\star$-algebras with connection $\W(\fg) \overset{\Theta^*}\longrightarrow \cA$ induced by $\theta \mapsto \Theta$, $\varOmega \mapsto \Omega$.
\end{prop}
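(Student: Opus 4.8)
The plan is to exploit the fact that, as a graded-commutative algebra, $\W(\fg) = S\fg^* \otimes \L\fg^*$ is \emph{free} on the degree-$1$ generators $\L^1\fg^* = \W^{0,1}(\fg)$ together with the degree-$2$ generators $S^1\fg^* = \W^{2,0}(\fg)$; the tautological tensors $\theta_\fg$ and $\varOmega_\fg$ are precisely the identity elements whose components run over these two copies of $\fg^*$. First I would define $\Theta^*$ on generators by $\theta_\fg \mapsto \Theta$ and $\varOmega_\fg \mapsto \Omega$ --- that is, a basis element $\xi \in \L^1\fg^*$ is sent to $\langle\xi,\Theta\rangle\in\cA^1$ and a basis element $\xi\in S^1\fg^*$ to $\langle\xi,\Omega\rangle\in\cA^2$ --- and extend multiplicatively. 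This is well defined because the only relations in a free graded-commutative algebra are the commutativity relations, and these are respected since $\Theta$ is odd and $\Omega$ is even in $\cA$. By construction $\Theta^*$ carries the connection $\theta_\fg$ to the connection $\Theta$.

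Next I would verify compatibility with $\rho$ and with $\iota$, reducing in each case to the generators: two algebra maps that agree on generators are equal, and two $\Theta^*$-derivations $D$ (maps with $D(ab)=Da\,\Theta^*b \pm \Theta^*a\,Db$) that agree on generators are equal. For $\rho$: the coadjoint action $\rho_g\xi = \xi\circ\Ad_{g^{-1}}$ on $\fg^*$ is dual to the adjoint action and matches the equivariance $\rho_g\Theta = \Ad_{g^{-1}}\Theta$, whence $\rho_g(\Theta^*\xi) = \langle\xi,\Ad_{g^{-1}}\Theta\rangle = \langle\rho_g\xi,\Theta\rangle = \Theta^*(\rho_g\xi)$ on generators, and equivariance propagates because $\rho_g$ is an algebra automorphism on both sides. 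For $\iota$: on generators $\iota_X\theta_\fg = X = \iota_X\Theta$ and $\iota_X\varOmega_\fg = 0 = \iota_X\Omega$, the last equality being horizontality of the curvature, which holds in any $G^\star$-algebra with connection as a consequence of the Cartan identity \eqref{eq:gstar2} together with $\iota_X\Theta = X$ constant; since $\Theta^*\circ\iota_X$ and $\iota_X\circ\Theta^*$ are both $\Theta^*$-derivations, they coincide.

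Compatibility with the differential is the one point needing more than evaluation on generators, and here I would use a closure trick. Let $T = \{\, a\in\W(\fg) : \Theta^*(d_{\W}a) = d(\Theta^*a) \,\}$. The Leibniz rule and multiplicativity of $\Theta^*$ show $T$ is a subalgebra, and $d_{\W}^2 = 0 = d^2$ show $T$ is closed under $d_{\W}$; hence $T$ is a sub-DGA. Since $\varOmega_\fg = d_{\W}\theta_\fg + \tfrac12[\theta_\fg\wedge\theta_\fg]$, the algebra $\W(\fg)$ is generated \emph{as a DGA} by $\theta_\fg$, so it suffices to check $\theta_\fg\in T$. This is the identity
\[ \Theta^*(d_{\W}\theta_\fg) = \Theta^*\!\left(\varOmega_\fg - \tfrac12[\theta_\fg\wedge\theta_\fg]\right) = \Omega - \tfrac12[\Theta\wedge\Theta] = d\Theta = d(\Theta^*\theta_\fg), \]
where $\Theta^*[\theta_\fg\wedge\theta_\fg] = [\Theta\wedge\Theta]$ because the bracket acts only in the (untouched) $\fg$-factor while $\Theta^*$ is multiplicative, and the third equality is the definition of the curvature $\Omega$. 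Thus $T = \W(\fg)$.

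Finally, for naturality, a morphism $\cA\to\cB$ of $G^\star$-algebras with connection carries $\Theta$ and $\Omega$ to the connection and curvature of $\cB$, so the two composites $\W(\fg)\to\cB$ agree on the generators $\theta_\fg$ and $\varOmega_\fg$, hence everywhere. The main obstacle is purely organizational --- correctly tracking the $\fg$-valued generators and justifying the ``check on generators'' principle through the derivation-extension and sub-DGA arguments --- rather than any substantive computation; the only genuine input is the definition of curvature, used in the displayed identity.
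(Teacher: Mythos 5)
Your proof is correct. Note that the paper itself offers no proof of Proposition \ref{prop:WeilToConn} --- it appears in the expository appendix, which the author explicitly says should be read as a stand-in for a citation to \cite{GuilleminSternberg99} --- so there is nothing to compare against; your argument is the standard one establishing the universal property of $\W(\fg)$: freeness of $S\fg^*\otimes\L\fg^*$ as a graded-commutative algebra to define $\Theta^*$ on generators, the derivation-extension principle for compatibility with $\rho$ and $\iota$, and the sub-DGA/closure argument reducing compatibility with $d_{\W}$ to the single identity $d\Theta=\Omega-\tfrac12[\Theta\wedge\Theta]$. The only point worth tightening is the $\rho$-equivariance check on the $S^1\fg^*$ generators, which needs $\rho_g\Omega=\Ad_{g^{-1}}\Omega$ in $\cA$; this is either part of the paper's definition of the curvature of a connection on a $G^\star$-algebra (it asserts $\Omega\in(\cA^2\otimes\fg)^G$) or follows by the same one-line computation from $\rho_g\Theta=\Ad_{g^{-1}}\Theta$ and the fact that $\rho_g$ is a DGA-automorphism, so this is a cosmetic omission rather than a gap.
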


Therefore, we see that $\W(\fg)$ serves as a natural algebraic model for differential forms on $EG$.  It is acyclic, and to any $G$-bundle with connection $(P,\Theta) \to X$, the Weil homomorphism is a natural map 
\begin{align}\label{eq:WeilHomo} \begin{split} \W^{2i,j}(\fg) &\overset{\Theta^*}{\longrightarrow} \O^{2i,j}(P) \\
\omega\otimes \eta &\longmapsto \omega( \Omega^{\wedge i}) \wedge \eta(\Theta^{\wedge j}) \end{split}
 \end{align}
that is compatible with the bi-grading, multiplication, $G$-action, derivative $d$, and contraction $\iota$.  Below is a diagram showing this.  For generic $\alpha \in \W(\fg)$, we use the notation $\alpha(\Theta)$ for $\Theta^*(\alpha)$.
{\scriptsize
\[ \hskip -.8in \vcenter{ \xymatrix@C=3mm{ & \vdots& & \vdots& &\vdots \\
&\Lambda^2\fg \ar[u]_{d_\fg} \ar[drr]_<<<<<<<<{\iota_{\varOmega}} & 0
 & S^1 \fg^* \otimes \Lambda^2\fg^* \ar[u]_{d_{\fg}} \ar[drr]_<<<<<<<<{\iota_{\varOmega}}& 0 & S^2 \fg^* \otimes \Lambda^2 \fg^* \ar[u]_{d_{\fg}}   \\
& \fg^*  \ar[u]_{d_\fg} \ar[drr]_<<<<<<<<{\iota_{\varOmega}}^{\iso}& 0 & S^1 \fg^* \otimes \Lambda^1 \fg^* \ar[u]_{d_{\fg}} \ar[drr]_<<<<<<<<{\iota_{\varOmega}}& 0 & S^2 \fg^* \otimes \Lambda^1 \fg^* \ar[u]_{d_{\fg}}  \\
&\R \ar[u]_{0} & 0 & S^1 \fg^* \ar[u]_{d_{\fg}} & 0 & S^2\fg^* \ar[u]_{d_{\fg}} 
}  } \Longrightarrow 
\vcenter{ \xymatrix{ \vdots &\vdots &\vdots &\vdots &\\
\O^{0,2}(P) \ar[u]_{d_\fg} \ar@{.>}[r]^{\nabla} \ar[drr]_<<<<<<<<{\iota_\Omega} & \O^{1,2}(P) \ar@{.>}[u]_{-d_\fg} \ar@{.>}[r]^{d_\nabla} \ar@{.>}[drr]_<<<<<<<<{-\iota_\Omega}& \Omega^{2,2}(P) \ar[u]_{d_\fg} \ar@{.>}[r]^{d_\nabla} \ar[drr]_<<<<<<<<{\iota_\Omega}& \O^{3,2}(P) \ar@{.>}[u]_{-d_\fg} \ar@{.>}[r]^{d_\nabla}&\cdots\\
\O^{0,1}(P)  \ar[u]_{d_\fg} \ar@{.>}[r]^{\nabla} \ar[drr]_<<<<<<<<{\iota_\Omega}& \O^{1,1}(P)\ar@{.>}[u]_{-d_\fg} \ar@{.>}[r]^{d_\nabla} \ar@{.>}[drr]_<<<<<<<<{-\iota_\Omega}& \O^{2,1}(P) \ar[u]_{d_\fg} \ar@{.>}[r]^{d_\nabla} \ar[drr]_<<<<<<<<{\iota_\Omega}& \O^{3,1}(P) \ar@{.>}[u]_{-d_\fg} \ar@{.>}[r]^{d_\nabla}&\cdots\\
\Omega^{0,0}(P) \ar[u]_{d_\fg} \ar@{.>}[r]^{d_\nabla} & \Omega^{1,0}(P) \ar@{.>}[u]_{-d_\fg} \ar@{.>}[r]^{d_\nabla}& \Omega^{2,0}(P) \ar[u]_{d_\fg} \ar@{.>}[r]^{d_\nabla}& \Omega^{3,0}(P) \ar@{.>}[u]_{-d_\fg} \ar@{.>}[r]^{d_\nabla}&\cdots
} } \] }


The universal Chern--Weil and Chern--Simons forms are naturally described via the Weil model.  The usual Chern--Weil forms are given by simply restricting \eqref{eq:WeilHomo} to the basic subcomplex
\begin{align*} (S^k \fg^*)^G = \W^{2k}(\fg)_{\basic} &\xrightarrow{\Theta^*} \O^{2k}(P)_{\basic} \iso \O^{2k}(M) \\
\omega &\longmapsto \omega(\Theta^{\wedge n}).
\end{align*}
That $\W(\fg)$ is acyclic implies any such $\omega \in (S\fg^*)^G$ is exact in $\W(\fg)$, and there is a standard way to pick out such a coboundary.  Using the trivial $G^\star$-algebra $\O([0,1])$ with natural coordinate $t$, define 
\begin{align*}
\theta_t &\= t\theta & &\in \big( \O([0,1]) \otimes \W(\fg)  \big)^1\otimes \fg, \\
\varOmega_t &\= d(\theta_t) + \tfrac{1}{2} [ \theta_t \wedge \theta_t]
& &\in \big( \O([0,1]) \otimes \W(\fg) \big)^2 \otimes \fg, 
\end{align*}
where one can rewrite the curvature to give $\varOmega_t =  dt\,\theta + t \varOmega + \tfrac{1}{2}(t^2-t)[\theta \wedge \theta]$.

For $\omega \in (S^k \fg^*)^K$, then $\omega(\varOmega_t^{\wedge k}) \in \big( \O([0,1]) \otimes \W(\fg)  \big)_{\basic}$, and the Chern--Simons form is defined using integration over the interval $[0,1]$ by 
\begin{equation}\label{eq:ChernSimonsWeil}
\CS_\omega \= \int_{[0,1]} \omega \big( \varOmega_t^{\wedge k} \big) \in \W^{2k-1}(\fg)^G.
\end{equation}
Stokes Theorem implies that $d\CS_\omega = \omega \in \W(\fg)$, and the Weil homomorphism sends this to the standard Chern--Simons forms on the total space of principal bundles
\begin{align*} \W(\fg)^G &\xrightarrow{\Theta^*} \O(P)^G \\
\CS_\omega &\longmapsto \CS_\omega(\Theta).
\end{align*}

\subsection{Equivariant Forms}\label{subsec:deRhamModel}
We now describe an explicit de Rham model $\O_G(M)$, often referred to as the Weil model, for $H^*_G(M;\R)$.  While the definition of $\O_G(M)$ does not require $G$ to be compact, the cohomology of $\O_G(M)$ is not necessarily isomorphic to $H^*_G(M;\R)$ for non-compact $G$.

Suppose $M$ is a manifold equipped with a smooth left $G$-action.  The tensor product $\W(\fg) \otimes \O(M)$ is a $G^\star$-algebra with derivative $d_G = d_{\W}\otimes 1 + 1 \otimes d$.  

\begin{defn}The complex of \textit{equivariant forms} is the basic sub-complex
\begin{equation*} \left(  \O_G^*(M), d_G \right) \= \big( \left( \W(\fg)\otimes \O(M) \right)^G_{\hor}, \> d_{\W} \otimes 1 + 1 \otimes d \big). \end{equation*}
\end{defn}

Note that if $f\colon  M \to N$ is $G$-equivariant map, then 
\[\W(\fg)\otimes \O(N) \xrightarrow{1 \otimes f^*} \W(\fg) \otimes \O(M)\] restricts to the basic complex, giving a naturally induced DGA-morphism we denote
\[ \O_G(N) \overset{f^*_G}\longrightarrow \O_G(M).\]

To relate $\O_G(M)$ to the cohomology of $EG \times_G M$, we use the following geometric fact.  There exist finite-dimensional smooth $n$-classifying bundles with connection $(E^{(n)}G, \Theta_{E^{(n)}G}) \to B^{(n)}G$ (\cite{MR0133772}).  This means that if $\dim(X) \leq n$, any bundle with connection $(P,\Theta)\to X$ is isomorphic to the pullback $f^*(E^{(n)}G, \Theta_{E^{(n)}G})$ for some smooth map $X \overset{f}\to B^{(n)}G$.  Furthermore, any two maps classifying $(P,\Theta)$ are homotopic.  The universal bundle $EG\to BG$ can then be constructed as a direct limit of finite-dimensional manifolds.
\[ \vcenter{ \xymatrix{  (EG,\Theta_{EG}) \ar[d]^{\pi} \\ BG } }  \quad \= \quad \varinjlim
\left(\vcenter{\xymatrix{ (E^{(n)}G, \Theta_{E^{(n)}G}) \ar[d]^{\pi} \\ B^{(n)}G } } \right) \]
When discussing differential forms and cochains, we use the notations
\begin{align*} \O^*(EG) &= \varprojlim \O^*(E^{(n)}G), & C^*(EG;A) &= \varprojlim C^*(E^{(n)}G;A),\\\O^*(BG) &= \varprojlim \O^*(B^{(n)}G), & C^*(BG;A) &= \varprojlim C^*(B^{(n)}G;A).\end{align*}

The connection $\Theta_{EG}$ and the Weil homomorphism give a $G^\star$-algebra homomorphism 
\begin{align*}
\W(\fg) \xrightarrow{\Theta_{EG}^*} \O(EG),
\end{align*}
which in turn gives
\begin{equation}\label{eq:EquivToBorel} \O_G(M) = \big(\W(\fg) \otimes \O(M) \big)_{\basic} \xrightarrow{\Theta_{EG}^* \otimes 1}\O(EG \times M)_{\basic} \iso \O(EG\times_G M).  \end{equation}

\begin{thm}[Equivariant de Rham Theorem]For $G$ compact, \eqref{eq:EquivToBorel} induces an isomorphism in cohomology
\[ H^*\big( \O_G(M), d_G \big) \overset{\iso}\longrightarrow H^*\big( \O(EG\times_G M),d \big) \iso H^*_G(M;\R).\]
\end{thm}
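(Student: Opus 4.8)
The plan is to prove the two isomorphisms separately, reserving the genuine work for the left-hand one. The right-hand isomorphism $H^*(\O(EG\times_G M),d)\iso H^*_G(M;\R)$ is just the ordinary de Rham theorem, applied with care to the direct limit. Writing $EG\times_G M = \varinjlim_n (E^{(n)}G\times_G M)$ as a direct limit of finite-dimensional smooth manifolds and $\O(EG\times_G M)=\varprojlim_n\O(E^{(n)}G\times_G M)$, I would note that the bonding maps $E^{(n)}G\times_G M\to E^{(n+1)}G\times_G M$ are highly connected (since $E^{(n)}G$ is $n$-connected), so in each fixed cohomological degree both the de Rham and the singular cohomology stabilize. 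The ordinary de Rham theorem at each finite level then passes to the limit.

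For the left-hand map I would first observe that the target is genuinely a basic subcomplex: $G$ acts freely on $EG\times M$, so $\O(EG\times M)_{\basic}\iso\O(EG\times_G M)$, and \eqref{eq:EquivToBorel} is exactly the basic restriction of $\Theta_{EG}^*\otimes 1\colon \W(\fg)\otimes\O(M)\to\O(EG)\otimes\O(M)\to\O(EG\times M)$. The key structural fact is that both $\W(\fg)$ and $\O(EG)$ are \emph{acyclic} $G^\star$-algebras equipped with a connection: $\W(\fg)$ is acyclic by Example \ref{WeilAlg}, while $\O(EG)=\varprojlim\O(E^{(n)}G)$ has the cohomology of a point because each $E^{(n)}G$ is $n$-connected. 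By Proposition \ref{prop:WeilToConn}, the Weil homomorphism $\W(\fg)\xrightarrow{\Theta_{EG}^*}\O(EG)$ is a morphism of $G^\star$-algebras with connection.

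The crux is then a homotopy-invariance statement: if $\phi\colon \cA_1\to\cA_2$ is a morphism of acyclic $G^\star$-algebras with connection, then for any $G$-manifold $M$ the induced map $H\big((\cA_1\otimes\O(M))_{\basic}\big)\to H\big((\cA_2\otimes\O(M))_{\basic}\big)$ is an isomorphism. I would deduce this from the Mathai--Quillen isomorphism (Section \ref{subsec:CartanMQ}), which uses the connection on $\cA_i$ to trivialize the horizontal variables and identify the basic complex of $\cA_i\otimes\O(M)$ with a Cartan-type complex in which the acyclic $\cA_i$-directions can be contracted; the resulting cohomology then depends only on $M$ and is natural in $\phi$. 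Equivalently, one filters both $(\cA_i\otimes\O(M))_{\basic}$ by symmetric ($S\fg^*$-)degree and compares the two spectral sequences: compactness of $G$ makes the invariants functor exact and identifies each $E_2$-page with $H^*(BG;\R)$ taken with local coefficients in $H^*(M;\R)$, with $\phi$ inducing the identity. Applying this to $\phi=\Theta_{EG}^*$ shows that $\Theta_{EG}^*\otimes 1$ is a quasi-isomorphism on basic subcomplexes, which is exactly the left-hand isomorphism.

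The main obstacle is establishing the homotopy-invariance lemma of the previous paragraph, i.e.\ that the functor $\cA\mapsto H\big((\cA\otimes\O(M))_{\basic}\big)$ carries morphisms of acyclic $G^\star$-algebras with connection to isomorphisms. This is precisely where the compactness hypothesis on $G$ is essential—it guarantees both that passing to $G$-invariants commutes with cohomology and that $H^*(BG;\R)\iso(S\fg^*)^G$—and where the bookkeeping of the three-term differential $d_G=d_\fg+\iota_{\varOmega_\fg}+d$ must be handled carefully. Since this is the content of the classical equivariant de Rham theorem in the Weil model, I would ultimately defer the detailed spectral-sequence or explicit-homotopy argument to \cite{GuilleminSternberg99}, as the appendix itself suggests.
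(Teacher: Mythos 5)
The paper gives no proof of this theorem at all—it simply cites Theorems 2.5.1, 4.3.1, and 6.7.1 of \cite{GuilleminSternberg99}. Your outline is a correct sketch of exactly the argument found there (both $\W(\fg)$ and $\O(EG)$ are acyclic $G^\star$-algebras with connection, and a morphism between such induces an isomorphism on basic cohomology after tensoring with $\O(M)$), and since you ultimately defer that key lemma to the same reference, your approach coincides with the paper's.
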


A proof of the above theorem can be found in Theorems 2.5.1 (and Theorems 4.3.1 and 6.7.1) of \cite{GuilleminSternberg99}.  The following lemma is used in the first construction of $\HG^*(M)$ in Section \ref{sec:HS}, along with the proof of \ref{prop:Uniqueness}.  Though it is certainly well-known, we are unaware of a specific reference, and we prove it directly so that the first construction of $\HG^*(M)$ does not rely on results from \cite{MR3049871}.

\begin{lemma}\label{lem:WeilEGinj}The homomorphism $\W(\fg) \xrightarrow{\Theta_{EG}^*} \O(EG)$ is injective, as are the induced homomorphisms $\O_G(M) \to \O(EG\times_G M)$.
\end{lemma}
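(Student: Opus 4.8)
The plan is to exploit the bigrading $\W^{2i,j}(\fg)=S^i\fg^*\otimes\Lambda^j\fg^*$, which the Weil homomorphism respects: by \eqref{eq:WeilHomo} it carries $\W^{2i,j}(\fg)$ into $\O^{2i,j}(EG)$, and within a fixed total degree the summands $\O^{2i,j}(EG)$ are linearly independent (they come from the splitting $TP=\TH P\oplus \TV P$). Hence it suffices to prove injectivity on each fixed bidegree $(2i,j)$. The decisive observation is that the universal connection $\Theta_{EG}$ is purely vertical, $\Theta_{EG}\in\O^{0,1}(EG)\otimes\fg$, so under $\O^{0,j}(EG)\iso C^\infty(EG,\Lambda^j\fg^*)$ the form $\eta(\Theta_{EG}^{\wedge j})$ is the constant section $\eta$, while the curvature $\Omega$ is horizontal, giving $\omega(\Omega^{\wedge i})\in\O^{2i,0}(EG)$. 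Under $\O^{2i,j}(EG)\iso C^\infty(EG,\pi^*\Lambda^{2i}T^*BG\otimes\Lambda^j\fg^*)$ we therefore have $\Theta_{EG}^*(\omega\otimes\eta)=\omega(\Omega^{\wedge i})\otimes\eta$. Writing a general $(2i,j)$-element as $\sum_l\tilde\omega_l\otimes\eta^{(l)}$ with $\{\eta^{(l)}\}$ a basis of $\Lambda^j\fg^*$, its image vanishes iff every $\tilde\omega_l(\Omega^{\wedge i})=0$. Thus the whole statement reduces to: for each $i$, the horizontal Chern--Weil map $S^i\fg^*\to\O^{2i}(EG)$, $\omega\mapsto\omega(\Omega^{\wedge i})$, is injective.

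For this reduced claim I would use the naturality of the Weil homomorphism together with the finite-dimensional $n$-classifying bundles $(E^{(n)}G,\Theta_{E^{(n)}G})$ recalled above. Since every principal $G$-bundle with connection over a base of dimension $\le n$ is isomorphic to the pullback of $(E^{(n)}G,\Theta_{E^{(n)}G})$, and the Weil homomorphism is natural, it is enough to produce, for each nonzero $\omega\in S^i\fg^*$, a single bundle with connection $(P_0,\Theta_0)$ over a finite-dimensional manifold with $\omega(\Omega_0^{\wedge i})\ne 0$; pulling back then shows $\omega(\Omega^{\wedge i})$ is already nonzero on some $E^{(n)}G$, hence on $EG=\varinjlim E^{(n)}G$. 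To build such a bundle, fix a basis $X_1,\dots,X_d$ of $\fg$, take the trivial bundle over a ball in $V=\R^{2di}$ with coordinate pairs $(x_{a,r},y_{a,r})$, and choose the connection so that its curvature at the origin is $\Omega_0=\sum_{a=1}^d\sum_{r=1}^i (dx_{a,r}\wedge dy_{a,r})\otimes X_a$. A short computation shows $\Omega_0^{\wedge i}\in\Lambda^{2i}V^*\otimes S^i\fg$ has, as coefficients of the distinct monomials of $S^i\fg$, wedge products of disjoint coordinate $2$-forms, which are linearly independent; hence $\omega(\Omega_0^{\wedge i})=0$ forces $\omega=0$.

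Finally, for the induced map on equivariant forms I would note that $\O_G(M)$ is a subspace of $\W(\fg)\otimes\O(M)$ and that the relevant map is the restriction of $\Theta_{EG}^*\otimes 1$ followed by the inclusion $\O(EG)\otimes\O(M)\into\O(EG\times M)$. Since vector spaces over $\R$ are flat, injectivity of $\Theta_{EG}^*$ yields injectivity of $\Theta_{EG}^*\otimes 1$ on $\W(\fg)\otimes\O(M)$, and the algebraic tensor product includes injectively into forms on the product; restricting to the basic subcomplex $\O_G(M)$ and identifying $\O(EG\times M)_{\basic}\iso\O(EG\times_G M)$ completes the argument. The main obstacle is the reduced horizontal claim: one must detect the entire symmetric algebra $S^i\fg^*$, not merely its $G$-invariant part, and this cannot be seen on a single fiber (where $\Omega$ vanishes identically), so the explicit construction of a bundle whose curvature realizes a sufficiently generic element of $\Lambda^2 V^*\otimes\fg$ is the crux.
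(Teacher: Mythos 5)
Your proof is correct and follows essentially the same route as the paper: reduce via the bigrading and the vertical isomorphism $\W^{0,*}(\fg)\iso\L\fg^*$ to injectivity on the symmetric part, then detect a nonzero $\omega\in S^i\fg^*$ with an explicit connection on a trivial bundle over a ball whose curvature at the origin is a sum of coordinate $2$-forms. The only (harmless) variation is that you build one connection over $\R^{2di}$ detecting all of $S^i\fg^*$ at once, whereas the paper tailors a connection over $\R^{2i}$ to a single nonvanishing evaluation $\omega(\xi_{i_1}\cdots\xi_{i_i})\neq 0$.
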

\begin{proof}The second homomorphism is given by restricting 
\[ \W(\fg) \otimes \O(M) \longrightarrow \O(EG)\otimes \O(M) \iso \O(EG\times M)\]
to the basic subcomplex.  Showing this map is injective is equivalent to showing $\W(\fg) \to \O(EG)$ is injective.

Any bundle with connection is isomorphic to the pullback of $(EG,\Theta_{EG})$, so it suffices to show that for any element $\alpha \in \W(\fg)$, there exists some $(P,\Theta) \xrightarrow{\pi} X$ such that $\alpha(\Theta) \neq 0 \in \O(P)$.  At any point $p$, $\Lambda T_p^*P$ is an algebra freely generated by the horizontal and vertical cotangent spaces.  On the vertical part, $\W^{0,*}(\fg) \to \L T_p^V P^* \iso \L \fg^*$  is an isomorphism, so it suffices to show the Weil homomorphism is injective on $\W^{*,0}(\fg)$. 

Let $\omega \in S^n \fg^*$ be any non-zero element.  Then $\omega( \xi_{i_1}\cdots \xi_{i_n}) \neq 0$ for some $(i_1, \ldots, i_n)$, where $\{\xi_i\}$ be a basis of $\fg$.  Let $\R^{2n} \times G \to \R^{2n}$ be the trivial bundle.  In a neighborhood of $0$, use the canonical frame $p$ to define a connection $\Theta$ by 
\[ p^*\Theta = x^1 dx^2 \xi_{i_1} + \cdots + x^{2n-1} dx^{2n} \xi_{i_n} \in \O^1 (\R^{2n};\fg).\]
The local curvature is given by 
\begin{align*} p^*\Omega &= dx^1 dx^2 \xi_{i_1} + \cdots + dx^{2n-1} dx^{2n} \xi_{i_n} +  \tfrac{1}{2}[p^*\Theta \wedge p^*\Theta],
\end{align*}
and the terms involving $p^*\Theta$ vanish when all $x^i=0$.  Evaluating $p^*\omega(\Theta^{\wedge n})$ at the origin gives 
\begin{align*}
p^*\omega(\Theta^{\wedge n})_{(0)}\big( \partial_1, \ldots, \partial_{2n}  \big) = n! \omega( \xi_{i_1} \cdots \xi_{i_n} ) \neq 0. 
\end{align*} 
\qedhere
\end{proof}

\subsection{Cartan model and Matthai--Quillen}\label{subsec:CartanMQ}There is another $G^\star$-algebra structure on $\W(\fg)\otimes \O(M)$ that leads to the Cartan model for $H^*_G(M;\R)$.  It is obtained by modifying the $G^\star$-structure so that the interior derivative $\iota$ only acts on the factor $\W(\fg)$.

The $G$-equivariant homomorphism
\[ \O^1(M) \overset{\iota_{\theta}}\longrightarrow \L^1 \fg^*, \]
defined by $(\iota_\theta \psi) (X) = \iota_{\theta(X)} \psi = \iota_X \psi$,  induces a derivation on associative algebra $\W(\fg)\otimes \O(M)$.  It exponentiates to a $G$-equivariant automorphism of associative algebras
\[ \W(\fg)\otimes \O(M) \xrightarrow{\exp(\iota_\theta)} \W(\fg)\otimes \O(M) \]
known as the Mathai--Quillen isomorphism \cite{MathaiQuillen}.  Conjugating by $e^{\iota_\theta}$ gives the new operators 
\[ d_C \= (\exp{\iota_{\theta}})\, d_G\, (\exp{- \iota_\theta}), \quad \iota^C \= (\exp{\iota_{\theta}} )\, \iota \, (\exp - \iota_\theta) \]
where $\iota^C_X (\alpha \otimes \psi) = (\iota_X \alpha) \otimes \psi$.  The isomorphism between $G^\star$-algebras induces an isomorphism of the basic subcomplexes, one of which is the previously discussed Weil model.  The other, known as the Cartan model, is 
\[ \big( \W(\fg) \otimes \O(M)\big)^G_{\Ker \iota^C} =  \big( S\fg^* \otimes \O(M) \big)^G,\]
and the derivative takes the form  $d_C = d - \iota_{\varOmega_\fg}$,
where $\iota_{\varOmega_\fg}$ is the degree 1 derivation induced by 
\[ \O^k(M) \xrightarrow{\varOmega_\fg \otimes 1} S^1 \fg^* \otimes \fg \otimes \O^k(M) \xrightarrow{1 \otimes\iota} S^1 \fg^* \otimes \O^{k-1}(M).\]
The Mathai--Quillen isomorphism may be used to interpret $\O_G(M)$ as the Cartan model throughout the entire paper.

\bibliographystyle{alphanum}
\bibliography{MyBibDesk}

\begin{thebibliography}{BNV}

\bibitem[BNV]{1311.3188}
Ulrich Bunke, Thomas Nikolaus, and Michael V{\"o}lkl.
\newblock Differential cohomology theories as sheaves of spectra.
\newblock {\em Journal of Homotopy and Related Structures}, pages 1--66, 2014.

\bibitem[Bry]{Bry93}
Jean-Luc Brylinski.
\newblock {\em Loop spaces, characteristic classes and geometric quantization},
  volume 107 of {\em Progress in Mathematics}.
\newblock Birkh\"auser Boston Inc., Boston, MA, 1993.

\bibitem[BT]{MR1850463}
Raoul Bott and Loring~W. Tu.
\newblock Equivariant characteristic classes in the {C}artan model.
\newblock In {\em Geometry, analysis and applications ({V}aranasi, 2000)},
  pages 3--20. World Sci. Publ., River Edge, NJ, 2001.

\bibitem[Bun]{1208.3961}
Ulrich Bunke.
\newblock {D}ifferential cohomology, 2012.
\newblock arXiv:1208.3961.

\bibitem[BV]{MR705039}
Nicole Berline and Mich{\`e}le Vergne.
\newblock Z\'eros d'un champ de vecteurs et classes caract\'eristiques
  \'equivariantes.
\newblock {\em Duke Math. J.}, 50(2):539--549, 1983.

\bibitem[CE]{CE48}
Claude Chevalley and Samuel Eilenberg.
\newblock Cohomology theory of {L}ie groups and {L}ie algebras.
\newblock {\em Trans. Amer. Math. Soc.}, 63:85--124, 1948.

\bibitem[CS]{CS85}
Jeff Cheeger and James Simons.
\newblock Differential characters and geometric invariants.
\newblock In {\em Geometry and topology (College Park, Md., 1983/84)}, volume
  1167 of {\em Lecture Notes in Math.}, pages 50--80. Springer, Berlin, 1985.

\bibitem[FH]{MR3049871}
Daniel~S. Freed and Michael~J. Hopkins.
\newblock Chern--{W}eil forms and abstract homotopy theory.
\newblock {\em Bull. Amer. Math. Soc. (N.S.)}, 50(3):431--468, 2013.

\bibitem[FOS]{MR1311654}
Jos{\'e}~M. Figueroa-O'Farrill and Sonia Stanciu.
\newblock Gauged {W}ess-{Z}umino terms and equivariant cohomology.
\newblock {\em Phys. Lett. B}, 341(2):153--159, 1994.

\bibitem[Fre]{Fre95}
Daniel~S. Freed.
\newblock Classical {C}hern-{S}imons theory. {I}.
\newblock {\em Adv. Math.}, 113(2):237--303, 1995.

\bibitem[FSS]{MR3019405}
Domenico Fiorenza, Urs Schreiber, and Jim Stasheff.
\newblock \v {C}ech cocycles for differential characteristic classes: an
  {$\infty$}-{L}ie theoretic construction.
\newblock {\em Adv. Theor. Math. Phys.}, 16(1):149--250, 2012.

\bibitem[Gom]{MR2147734}
Kiyonori Gomi.
\newblock Equivariant smooth {D}eligne cohomology.
\newblock {\em Osaka J. Math.}, 42(2):309--337, 2005.

\bibitem[GS]{GuilleminSternberg99}
Victor~W. Guillemin and Shlomo Sternberg.
\newblock {\em Supersymmetry and equivariant de {R}ham theory}.
\newblock Mathematics Past and Present. Springer-Verlag, Berlin, 1999.
\newblock With an appendix containing two reprints by Henri Cartan [ MR0042426
  (13,107e); MR0042427 (13,107f)].

\bibitem[Hei]{MR2206877}
J.~Heinloth.
\newblock Notes on differentiable stacks.
\newblock In {\em Mathematisches {I}nstitut, {G}eorg-{A}ugust-{U}niversit\"at
  {G}\"ottingen: {S}eminars {W}inter {T}erm 2004/2005}, pages 1--32.
  Universit\"atsdrucke G\"ottingen, G\"ottingen, 2005.

\bibitem[HL]{HL06}
Reese Harvey and Blaine Lawson.
\newblock From sparks to grundles---differential characters.
\newblock {\em Comm. Anal. Geom.}, 14(1):25--58, 2006.

\bibitem[HQ]{MR3335251}
Michael~J. Hopkins and Gereon Quick.
\newblock Hodge filtered complex bordism.
\newblock {\em J. Topol.}, 8(1):147--183, 2015.

\bibitem[HS]{HS05}
M.~J. Hopkins and I.~M. Singer.
\newblock Quadratic functions in geometry, topology, and {M}-theory.
\newblock {\em J. Differential Geom.}, 70(3):329--452, 2005.

\bibitem[KN]{KN63}
Shoshichi Kobayashi and Katsumi Nomizu.
\newblock {\em Foundations of differential geometry. {V}ol {I}}.
\newblock Interscience Publishers, a division of John Wiley \& Sons, New
  York-Lond on, 1963.

\bibitem[K{\"u}b]{1510.06392v1}
Andreas K{\"u}bel.
\newblock {E}quivariant {D}ifferential {C}ohomology, 2015.
\newblock arXiv:1510.06392.

\bibitem[LMS]{MR711050}
R.~K. Lashof, J.~P. May, and G.~B. Segal.
\newblock Equivariant bundles with abelian structural group.
\newblock In {\em Proceedings of the {N}orthwestern {H}omotopy {T}heory
  {C}onference ({E}vanston, {I}ll., 1982)}, volume~19 of {\em Contemp. Math.},
  pages 167--176, Providence, RI, 1983. Amer. Math. Soc.

\bibitem[Lur]{MR2522659}
Jacob Lurie.
\newblock {\em Higher topos theory}, volume 170 of {\em Annals of Mathematics
  Studies}.
\newblock Princeton University Press, Princeton, NJ, 2009.

\bibitem[Mei]{Meinrenken-EquivCoh}
E.~Meinrenken.
\newblock Equivariant cohomology and the {C}artan model.
\newblock In {\em Encyclopedia of Mathematical Physics}. Elsevier, 2006.

\bibitem[MQ]{MathaiQuillen}
Varghese Mathai and Daniel Quillen.
\newblock Superconnections, {T}hom classes, and equivariant differential forms.
\newblock {\em Topology}, 25(1):85--110, 1986.

\bibitem[NR]{MR0133772}
M.~S. Narasimhan and S.~Ramanan.
\newblock Existence of universal connections.
\newblock {\em Amer. J. Math.}, 83:563--572, 1961.

\bibitem[Red]{MR3004281}
Corbett Redden.
\newblock Harmonic forms on principal bundles.
\newblock {\em Asian J. Math.}, 16(4):637--660, 2012.

\bibitem[Sch]{1310.7930}
Urs Schreiber.
\newblock {D}ifferential cohomology in a cohesive infinity-topos, 2013.
\newblock arXiv:1310.7930.

\bibitem[SS]{SS08a}
James Simons and Dennis Sullivan.
\newblock Axiomatic characterization of ordinary differential cohomology.
\newblock {\em J. Topol.}, 1(1):45--56, 2008.

\end{thebibliography}

\end{document}